\newtheorem{theorem}{Theorem}[section]
\newtheorem{lemma}[theorem]{Lemma}
\newtheorem{proposition}[theorem]{Proposition}
\newtheorem{corollary}[theorem]{Corollary}
\newtheorem{question}[theorem]{Question}
\newtheorem*{theorem*}{Theorem}
\newtheorem*{maintheorem*}{Main Theorem}
\newtheorem*{mainthm*}{\cref{thm:mainthm}}
\newtheorem*{bettermainthm*}{\cref{thm:bettermainthm}}
\newtheorem*{mainthmtwo*}{\cref{thm:mainthmwithtotallymin}}
\newtheorem*{mainthmthree*}{\cref{thm:prenilreturns}}
\newtheorem*{lemma*}{Lemma}
\newtheorem*{techlemma*}{Technical Lemma}
\newtheorem*{claim*}{Claim}
\newtheorem*{conjecture*}{Conjecture}
\newtheorem*{question*}{Question}
\newtheorem*{corollary*}{Corollary}
\newtheorem*{maincorollary*}{Main Corollary}
\newtheorem*{idea*}{Idea}
\newtheorem*{fact*}{Fact}
\theoremstyle{definition}
\newtheorem{definition}[theorem]{Definition}
\newtheorem{remark}[theorem]{Remark}
\newtheorem{example}[theorem]{Example}
\newcommand{\wt}{(W,T)}
\newcommand{\xt}{(X,T)}
\newcommand{\zt}{(Z,T)}
\newcommand{\dX}{X^{\Delta}}
\newcommand{\dx}{x^{\Delta}}
\newcommand{\dmu}{\mu^{\Delta}}
\newcommand{\dnux}{\nu_x^{\Delta}}
\newcommand{\dnu}{\nu^{\Delta}}
\newcommand{\N}{\mathbb{N}}
\newcommand{\Z}{\mathbb{Z}}
\newcommand{\Q}{\mathbb{Q}}
\newcommand{\R}{\mathbb{R}}
\newcommand{\X}{\mathbb{X}}
\newcommand{\T}{\mathbb{T}}
\newcommand{\eps}{\varepsilon}
\newcommand{\one}{\mathbbm{1}}
\newcommand{\dstar}{d^*}
\newcommand{\dtstar}{d_\times^*}
\newcommand{\ip}{\text{IP}}
\newcommand{\ipzero}{\ip_0}
\newcommand{\orb}{\overline{o}}
\newcommand\restr[2]{{ \left.\kern-\nulldelimiterspace #1 \right|_{#2}}}
\newcommand{\supp}{\mathop{\mathrm{supp}}}
\newcommand{\rank}{\mathop{\mathrm{rank}}}
\newcommand{\diagX}{\Delta(X)}
\newcommand{\subsets}{\mathcal{F}}
\newcommand{\lsc}{{\sc lsc}}
\newcommand{\lcm}{\text{{\sc lcm}}}
\newcommand{\atom}{\mathfrak{a}}
\newcommand{\krat}{\mathbf{K}_{\text{rat}}}
\newcommand{\defeq}{\vcentcolon=}
\newcommand{\ipzstar}{\text{IP}_0^*}
\newcommand{\iprstar}{\text{IP}_r^*}
\newcommand{\ipr}{\text{IP}_r}
\newcommand{\disc}{\Xi}
\newcommand{\cont}{\Omega}
\newcommand\bsout{\bgroup\markoverwith{\textcolor{blue}{\rule[0.5ex]{2pt}{0.4pt}}}\ULon}
\title[Multiplicative properties of return time sets]{Multiplicative combinatorial properties of return time sets in minimal dynamical systems}
\author[D.\ Glasscock]{Daniel Glasscock}
\address{Mathematical Sciences Department \\ University of Massachusetts Lowell\\
Lowell, MA, USA}
\email{daniel\textunderscore glasscock@uml.edu}
\author[A.\ Koutsogiannis]{Andreas Koutsogiannis}
\address{Department of Mathematics\\ The Ohio State University\\
Columbus, OH, USA}
\email{koutsogiannis.1@osu.edu}
\author[F.\ K.\ Richter]{Florian Karl Richter}
\address{Department of Mathematics\\ Northwestern University\\
Evanston, IL, USA}
\email{fkr@northwestern.edu}
\begin{document}

\begin{abstract}
We investigate the relationship between the dynamical properties of minimal topological dynamical systems and the multiplicative combinatorial properties of return time sets arising from those systems. In particular, we prove that for a residual set of points in any minimal system, the set of return times to any non-empty, open set contains arbitrarily long geometric progressions. Under the separate assumptions of total minimality and distality, we prove that return time sets have positive multiplicative upper Banach density along $\N$ and along cosets of multiplicative subsemigroups of $\N$, respectively.  The primary motivation for this work is the long-standing open question of whether or not syndetic subsets of the positive integers contain arbitrarily long geometric progressions; our main result is some evidence for an affirmative answer to this question.

\end{abstract}

\subjclass[2010]{Primary: 37B05; Secondary: 05D10}

\keywords{Multiplicatively large sets, multiplicative upper Banach density, geometric progressions, return time sets, minimal topological dynamical systems, syndetic sets, $\ip_r^*$ sets.}

\maketitle

\section{Introduction}\label{sec:intro}

\subsection{Results}\label{sec:results}

Let $T: X \to X$ be a continuous map of a compact metric space $(X,d)$. In the topological dynamical system $\xt$, the \emph{set of return times} of a point $x \in X$ to a non-empty, open set $U \subseteq X$ is
\[R(x,U) \defeq \{n \in \N \ | \ T^n x \in U \}.\]

Much is known about the relationship between dynamical properties of the system $\xt$ and the additive combinatorial properties of the sets $R(x,U)$. For example, if $\xt$ is \emph{minimal} (that is, for all $x \in X$, the set $\{T^n x \ | \ n \in \N\}$ is dense in $X$),
then every set of return times $R(x,U)$ is \emph{syndetic}, meaning that there exists $N \in \N$ such that $R(x,U)$ has non-empty intersection with every interval of $N$ consecutive positive integers. This connection between dynamics and additive combinatorics has had a  strong influence in Ramsey Theory; we discuss some of the history behind this connection and put our main results into context in \cref{sec:historyandcontext}.

In this paper, we consider the relationship between the dynamical properties of the system $\xt$ and the \emph{multiplicative} combinatorial properties of the sets $R(x,U)$.  Our first main result concerns geometric progressions, configurations of the form $\{nm,nm^2,\ldots,nm^\ell\}$, in sets of return times in minimal systems.

\begin{theorem}\label{thm:mainthm}
Let $\xt$ be a minimal dynamical system. There exists a residual set $X' \subseteq X$ such that for all $x \in X'$ and all non-empty, open $U \subseteq X$, the set $R(x,U)$ contains arbitrarily long geometric progressions.
\end{theorem}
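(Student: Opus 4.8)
The plan is to argue by Baire category. Fix a countable base $\mathcal{B}$ of non-empty open subsets of $X$, and for $U\in\mathcal{B}$ and $\ell\in\N$ let
\[
X_{U,\ell}\;=\;\bigcup_{n\ge 1,\ m\ge 2}\ \bigcap_{i=1}^{\ell}T^{-nm^i}U ,
\]
the set of points $x$ for which $R(x,U)$ contains a geometric progression of length $\ell$. Since each $T^{-nm^i}U$ is open, $X_{U,\ell}$ is open. If every $X_{U,\ell}$ is dense, then $X':=\bigcap_{U\in\mathcal{B},\,\ell\in\N}X_{U,\ell}$ is residual, and it has the required property: any non-empty open $V$ contains some $U\in\mathcal{B}$, and $R(x,U)\subseteq R(x,V)$ whenever $U\subseteq V$. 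Hence everything reduces to the following recurrence statement: for every pair of non-empty open sets $V,U$ and every $\ell\ge 1$ there are $x\in V$, $n\ge 1$ and $m\ge 2$ with $T^{nm^i}x\in U$ for $i=1,\dots,\ell$; equivalently, $V\cap T^{-nm}U\cap T^{-nm^2}U\cap\cdots\cap T^{-nm^\ell}U\neq\varnothing$ for some such $n,m$.

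I would prove this recurrence statement using topological multiple recurrence (Furstenberg--Weiss) together with the structure theory of minimal systems. Two reductions organize the argument. First, a geometric progression can be extracted from an arithmetic one: if $x,T^{d}x,\dots,T^{Kd}x\in U$ with $K$ large, then already $\{d,2d,4d,\dots,2^{\lfloor\log_2 K\rfloor}d\}\subseteq R(x,U)$, which is a progression of the form $\{nm^i\}$ with $n=d$, $m=2$; so it would be enough to realize such an arithmetic progression of return times starting from a point of $V$. Second, since multiple recurrence produces the step $d$ only out of a syndetic set of admissible steps, and since one begins by using minimality to pass to a non-empty open $V_1\subseteq V$ and a time $c$ with $T^{c}(V_1)\subseteq U$ — making $c$ a return time common to all of $V_1$ — there is room to choose $d$, and more importantly the common ratio of the final progression, compatibly with the periodic structure of the system.

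The crux is that the ratio \emph{cannot} in general be taken to be $2$: on an odometer, for instance, some return-time sets contain no geometric progression with ratio $2$ at all. What governs the admissible ratios is the maximal equicontinuous factor of $(X,T)$, a minimal rotation on a compact monothetic group; for each $N$ the power $(X,T^{N})$ breaks into finitely many clopen minimal components, cyclically permuted by $T$, and the plan is to choose simultaneously and compatibly: a period $N$, a minimal component $X_0$ of $(X,T^{N})$ that still carries traces of both $V_1$ and $U$ (possible because $T^{c}(V_1)\subseteq U$ confines them to the same component once $N\mid c$), a ratio $m\ge 2$ adapted so that multiplication by $m$ is harmless on the finite-order part of the equicontinuous factor, and the multiple-recurrence step — so that every recurrence used takes place in a genuinely minimal subsystem and the multiplicative structure of the progression is preserved. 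A geometric progression of return times for $(X_0,T^{N})$ then yields one for $(X,T)$ upon multiplying through by $N$. This joint coordination of period, component, ratio and recurrence step is the technical heart of the proof, and it is precisely what forces the additional hypotheses (total minimality, distality) in the paper's sharper, quantitative results, where the equicontinuous obstruction is trivial or otherwise controlled.
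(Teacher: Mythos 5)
Your Baire-category reduction is correct, and you have rightly isolated both the recurrence statement to be proved and the equicontinuous/periodicity obstruction as the delicate part. The gap lies in the central reduction you propose, namely extracting a geometric progression of bounded ratio from an arithmetic progression of return times; the ``coordination'' sketched in your last paragraph does not repair it. The ratio-$2$ extraction works only when the arithmetic progression of return times starts at $0$: if $T^{jd}x\in U$ for $j=0,\dots,K$, so in particular $x\in U$, then $\{2d,4d,\dots,2^{a}d\}\subseteq R(x,U)$ for $2^{a}\leq K$. That is exactly the situation in \cref{thm:dvdwreform}, where $x$ is required to lie in $U$. In \cref{thm:mainthm}, $x$ lies in $V$ but not necessarily in $U$, and passing from $V$ to $U$ via a transition time $c$ with $T^{c}(V_{1})\subseteq U$ yields an arithmetic progression of return times $\{c,c+d,\dots,c+Kd\}$ with a fixed nonzero offset $c$. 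Such a set contains a geometric progression of bounded ratio only under delicate arithmetic compatibility (for instance $d\mid c$ with $K$ large compared to $c/d$), but $d$ is produced by multiple recurrence only after $K$ is fixed and ranges over a syndetic set that one cannot steer into the finitely many divisors of $c$; nor does varying $c$ over the syndetic set of transition times obviously help, since $V_{1}$ and hence the admissible $d$ change with $c$. The paper explicitly warns against exactly this after \cref{thm:dvdwreform}: one cannot pass from the $x\in U$ case to the general case by translating return-time sets, because additive translation destroys multiplicative largeness.

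The actual proof sidesteps the offset problem by building the geometric structure into the recurrence from the outset rather than trying to extract it at the end. One fixes a long geometric progression $\vec m=(m,m^{2},\dots,m^{L})$ with $m$ coprime to a suitable $N$ and studies the product map $T^{\vec m}=T^{m_{1}}\times\cdots\times T^{m_{L}}$ on the closure $X^{\Delta}$ of the orbit of the diagonal; Glasner's theorem (\cref{thm:glasnersystemisminimal}) makes the two-parameter system $(X^{\Delta},\Delta(T),T^{\vec m})$ minimal. A disintegration of a $T^{\vec m}$-invariant measure over the orbit-closure map, combined with lower semicontinuity of that map (\cref{lem:orbislsc}, \cref{lem:fortvariant}) and Moreira's lemma on invariant means (\cref{lem:joels-lemma}), yields, for an $\eps$-dense set of $x$, a translation-invariant mean $\lambda$ on $\N$ with $\lambda\big(R(x,U)/m_{i}\big)>\eta$ for a positive proportion of indices $i$. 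A pigeonhole over $\lambda$ (\cref{lem:meanpigeon}) then produces a single $n$ and a subset $F'\subseteq\{m,\dots,m^{L}\}$ of relative density $>\eta$ with $nF'\subseteq R(x,U)$, and Szemer\'edi's theorem applied to the exponents of $F'$ extracts a genuine geometric subprogression with no additive offset to fight (\cref{thm:measureonorbitstwo} and the proof of \cref{thm:mainthm}). So the outline you propose is not merely incomplete: the arithmetic-to-geometric extraction is the wrong reduction, and the argument requires the measure-theoretic machinery on the diagonal orbit closure rather than topological multiple recurrence plus a structure-theoretic adjustment of the ratio.
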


We are able to strengthen the conclusion of \cref{thm:mainthm} in special classes of dynamical systems. The system $\xt$ is \emph{totally minimal} if for all $n \in \N$, the system $(X,T^n)$ is minimal.
Our main result for totally minimal systems makes use of the \emph{multiplicative upper Banach density}, defined for $A \subseteq \N$ by
\begin{align}\label{eqn:densityinseminplusintromult}
\dtstar(A) \defeq \limsup_{n \to \infty} \ \max_{m \in \N} \ \frac{|A \cap \{m p_1^{e_1} \cdots p_n^{e_n} \ | \ e_1, \ldots, e_{n} \in \{1, \ldots, n\} \}|}{n^n},
\end{align}
where $(p_n)_{n \in \N}$ is an enumeration of the primes. This density, introduced and studied by Bergelson \cite{bmultlarge}, is the multiplicative analogue of the additive upper Banach density in $\N$. It is independent of the chosen enumeration of the primes; see \cref{def:multdensity}  for an equivalent definition of $\dtstar$ and the remark following it.

\begin{theorem}\label{thm:mainthmwithtotallymin}
Let $\xt$ be a totally minimal dynamical system. There exists a residual set $X' \subseteq X$ such that for all non-empty, open $U \subseteq X$, there exists $\eta > 0$ such that for all $x \in X'$, the set $R(x,U)$ satisfies $\dtstar(R(x,U)) \geq \eta$.
\end{theorem}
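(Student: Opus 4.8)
The plan is to prove the analogue statement for a single self-map and then iterate. Concretely, I want to show: for a totally minimal system $\xt$ and a non-empty open $U \subseteq X$, there is a residual set of "good" points $x$ and an $\eta > 0$ (depending only on $U$) such that $R(x,U)$ meets any "multiplicative box" $\{m p_1^{e_1}\cdots p_n^{e_n} : e_i \in \{1,\dots,n\}\}$ in a proportion at least $\eta$ of its $n^n$ cells, for all large $n$. The natural device is to pass to product systems: fix a finite set $F = \{p_{i_1}^{e_1}\cdots\}$ of the relevant "shifts" and consider the diagonal-type system generated by $(T^{f})_{f \in F}$ acting on $X^{F}$, or more efficiently work directly with the commuting family of homeomorphisms $T^{p_1}, T^{p_2},\dots$ Since these commute, $\Z$-actions generated by them give a $\Z^k$-action; total minimality is exactly what guarantees each generator $T^{p}$ is minimal, and one wants a multi-dimensional minimality/recurrence input.

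**The key reduction.** The heart of the matter is a quantitative multiple recurrence statement: for a totally minimal system and any non-empty open $U$, there exists $\eta>0$ so that for a residual set of $x$, and for every $n$, one has $T^{mp_1^{e_1}\cdots p_n^{e_n}}x \in U$ for at least $\eta n^n$ of the tuples $(e_1,\dots,e_n) \in \{1,\dots,n\}^n$, uniformly over $m$. I would obtain this by a Baire-category / measure-transference argument: first establish the measure-theoretic statement that for \emph{every} $T$-invariant measure $\mu$ with $\mu(U)>0$, the multiplicative-density lower bound holds with $\eta = \mu(U)$ (or a fixed fraction thereof), using that under total minimality the relevant powers $T^{p}$ are measure-preserving and, crucially, that $\dtstar$-density statements reduce to statements about the $\Z^{(\N)}$-action by the primes; then push from "positive measure" to "residual set of points" via the standard fact that in a minimal system, for any open $U$ the set of $x$ for which $R(x,U)$ is "multiplicatively large" is either empty or residual, because multiplicative largeness is a $G_\delta$-type, $T$-invariant condition. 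I expect the authors have isolated a general principle of the form: a syndetic-type hypothesis on the return sets of \emph{all} points, stable under taking powers $T^n$, forces positive $\dtstar$-density — and total minimality is precisely the hypothesis that all return-time sets of the powers $T^n$ remain syndetic.

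**Carrying out the density estimate.** To get the quantitative count, I would fix $n$, consider the measure-preserving maps $S_i = T^{p_i}$ for $i \le n$, and estimate $\int \frac{1}{n^n}\sum_{e \in \{1,\dots,n\}^n} \one_U\big(S_1^{e_1}\cdots S_n^{e_n} T^m x\big)\, d\mu(x)$; by invariance each term integrates to $\mu(U)$, so the average over the box is exactly $\mu(U)$, hence for $\mu$-a.e.\ $x$ (and then, by minimality and a category upgrade, for a residual set of $x$) the count is at least, say, $\mu(U)/2 \cdot n^n$ for infinitely many $n$ — giving $\dtstar(R(x,U)) \ge \mu(U)/2$. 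The uniformity in $m$ comes for free from invariance. Taking $\eta$ to be half the supremum of $\mu(U)$ over ergodic invariant measures (positive since $U$ is open and $X$ is the support of some invariant measure by minimality) yields the theorem. The one subtlety is that $\dtstar$ as defined in \eqref{eqn:densityinseminplusintromult} uses \emph{boxes} $\{m p_1^{e_1}\cdots p_n^{e_n}\}$ with exponents in a growing range, so one must make sure the "box" over which one averages in the dynamical argument matches the combinatorial density's box; I would invoke the equivalent definition of $\dtstar$ promised after \eqref{eqn:densityinseminplusintromult} (Definition~\ref{def:multdensity}) which presumably phrases $\dtstar$ via Følner-type sequences in the multiplicative semigroup $(\N,\times) \cong \bigoplus_{p}(\N,+)$, making the averaging transparent.

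**Main obstacle.** The principal difficulty is the passage from the measure-theoretic/pointwise-a.e.\ statement to a statement holding on a \emph{residual} set of points $x$ simultaneously for a \emph{fixed} $\eta$ depending only on $U$: a priori the pointwise ergodic theorem gives the count for $\mu$-a.e.\ $x$, and there is no single invariant measure whose full-measure set is residual. The resolution must use that minimality forces the property "$R(x,U)$ has $\dtstar$-density $\ge \eta$" to be a tail/invariant property whose validity-set is automatically $G_\delta$ and dense once it is non-empty — i.e.\ one proves non-emptiness via the measure argument and residuality via category. Making "$\dtstar(R(x,U)) \ge \eta$" literally into a countable intersection of open dense sets (using the $\limsup$ and rational thresholds) is the technical crux, and it is presumably where the hypothesis that $(X,T^n)$ is minimal for \emph{all} $n$ — rather than just $T$ — gets used, since the box $\{mp_1^{e_1}\cdots p_n^{e_n}\}$ involves all the powers $T^{p_i}$ at once.
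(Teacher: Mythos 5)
Your proposal shares the broad skeleton of the paper's argument — consider commuting powers of $T$ on a product/diagonal system, establish a density statement measure-theoretically, then upgrade to a residual set — and you have correctly flagged the central difficulty (passing from a.e.\ statements to residuality). However, the resolution you propose for that difficulty does not work, and this is a genuine gap.

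You assert that ``$R(x,U)$ has $\dtstar$-density $\geq \eta$'' is a $T$-invariant condition on $x$, so that once non-empty its validity set is automatically a dense $G_\delta$. This is false: $R(Tx,U) = R(x,U) - 1$, and multiplicative density is \emph{not} translation-invariant. Indeed the paper emphasizes (in \cref{sec:historyandcontext}) that one can construct a multiplicatively thick set $A$ such that every non-trivial additive translate $A+t$ has zero multiplicative density in every coset of every non-trivial multiplicative subsemigroup, so the set of $x$ with $\dtstar(R(x,U)) \geq \eta$ is not an invariant set in general, and minimality gives you nothing for free. The paper's residual set is built by entirely different means: it is the set of $x$ for which $\Delta(x)$ is a continuity point of the orbit closure map $\orb_{T^{\vec m}}: \dX \to \subsets(\dX)$, shown residual via Fort's theorem (\cref{lem:fortvariant} and \cref{prop:pointsofcontinuityalongthediagonal}), and the continuity is used to push a disintegration measure $\dmu_{\vec w}$ (obtained by disintegrating a $T^{\vec m}$-invariant measure over the pullback of the Borel $\sigma$-algebra on $\subsets(\dX)$ through $\orb_{T^{\vec m}}$) along a sequence of $\Delta(T)$-iterates so that it lands on the orbit closure of $\Delta(x)$ with good marginals. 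That measure is then converted into an additively invariant mean via Moreira's \cref{lem:joels-lemma}, and a pigeonhole (\cref{lem:meanpigeon}) finishes. None of this machinery — set-valued continuity, disintegration over orbit-closure atoms, nor the invariant mean construction — appears in your outline, and it is exactly this machinery that circumvents the non-invariance obstruction you ran into.

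A secondary issue: your ``density estimate'' step — that the box average integrating to $\mu(U)$ yields the count $\geq (\mu(U)/2) n^n$ for a.e.\ $x$ and infinitely many $n$ — is not a consequence of Fubini alone; for a fixed $n$ one only gets a positive-measure set of good $x$, and intersecting over infinitely many $n$ requires an additional argument. The paper sidesteps this by never running a pointwise ergodic argument at all: \cref{prop:existenceofmean} works with an arbitrary finite $F \subseteq \N$ and produces, for each good $x$, a mean $\lambda$ with $\lambda(R(x,U)/f) > \eta$ for a positive proportion of $f \in F$, from which the $\dtstar$ bound falls out of the definition (\cref{def:multdensity}) by a pigeonhole — no box-by-box asymptotics are needed.
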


Szemer\'edi's theorem \cite{szemeredi} on arithmetic progressions can be used to prove that any set of positive multiplicative upper Banach density contains arbitrarily long geometric progressions; see \cref{thm:multszem}. Therefore, \cref{thm:mainthmwithtotallymin} strengthens \cref{thm:mainthm} when the system $\xt$ is totally minimal. In fact, sets of positive multiplicative upper Banach density contain \emph{geo-arithmetic configurations}, combinatorial configurations of the form $\big\{ c (a+id)^j \ \big| \ 1 \leq i, j \leq \ell \big\}$ that are much richer than simply geometric progressions; see \cref{thm:geoarithmeticpatterns}.

Without the assumption of total minimality, \emph{local obstructions} appear that prevent return time sets from having positive multiplicative density.
For instance, the set $4\N-2$ is a set of return times in a four point rotation (which is minimal but not totally minimal), but it has zero multiplicative upper Banach density. The set $4\N-2$ is, however, multiplicatively large in a different sense: it is a coset of the multiplicative subsemigroup $2\N-1$.

We resolve local obstructions by measuring multiplicative density not along $\N$, but along cosets of multiplicative subsemigroups of $\N$. A \emph{multiplicative subsemigroup} of $\N$ is a subset $S \subseteq \N$ that is closed under multiplication, and a \emph{coset} of $S$ is a set of the form $nS$ for $n \in \N$.  The multiplicative upper Banach density $\dstar_{nS}$ for subsets of $nS$ can be defined analogously to $\dtstar$ in (\ref{eqn:densityinseminplusintromult}) (using dilates of so-called \emph{F{\o}lner sequences} in $S$) or as in \cref{def:multdensity}.  For convenience, when $A \subseteq \N$, we write $\dstar_{nS}(A)$ to mean $\dstar_{nS}(A \cap nS)$.

For the special class of distal systems, we prove an analogue of \cref{thm:mainthmwithtotallymin} without the assumption of total minimality. A system $\xt$ is called \emph{distal} if for all $x, y \in X$ with $x \neq y$, $\inf_{n \in \N} d(T^n x, T^n y) > 0$. Distal systems encompass limits of group extensions of group rotations and form important building blocks in the various structure theories of minimal dynamical systems.

The following theorem shows that the local obstructions described above are the only types of obstructions to positive multiplicative density in distal systems.

\begin{theorem}\label{thm:mainthmwithdistal}
Let $\xt$ be a minimal distal system. There exists a residual set $X' \subseteq X$ such that for all non-empty, open $U \subseteq X$, there exists $\eta > 0$ such that for all $x \in X'$, there exists a multiplicative subsemigroup $S$ of $\N$ and $n \in \N$ such that the set $R(x,U)$ satisfies $d_{nS}^* \big( R(x,U) \big) \geq \eta$.
\end{theorem}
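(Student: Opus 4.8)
The key structural fact about minimal distal systems is the Furstenberg structure theorem: every minimal distal system is an inverse limit of a (possibly transfinite) tower of isometric extensions, starting from the trivial system. So the natural strategy is transfinite induction along this tower, proving at each stage that return time sets have positive $d_{nS}^*$ along a suitable coset of a multiplicative subsemigroup, with the subsemigroup and coset chosen coherently as one moves up the tower. The base case is the trivial system, where $R(x,U) = \N$ and one takes $S = \N$, $n = 1$. For limit stages one passes to the inverse limit: a basic open set upstairs pulls back from some finite level, and $R(x,U)$ at that level is already known to have positive density along some $nS$, so the limit stage is essentially free. All the content is in the successor step: isometric (equivalently, compact group) extensions.

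For the successor step, suppose $\pi \colon (X,T) \to (Y,T)$ is an isometric extension, realized as a skew product $X = Y \times Z$ with $Z$ a homogeneous space of a compact group, and $T(y,z) = (Ty, \rho(y)z)$ for a continuous cocycle $\rho \colon Y \to K$ into the acting group. Fix $x = (y_0, z_0)$ and a basic open box $U = V \times W$ with $V \subseteq Y$, $W \subseteq Z$ open and non-empty. Then $T^n x \in U$ iff $T^n y_0 \in V$ and $\rho^{(n)}(y_0) z_0 \in W$, where $\rho^{(n)}(y_0) = \rho(T^{n-1}y_0)\cdots\rho(y_0)$ is the cocycle cocycle-sum. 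By the inductive hypothesis applied downstairs, $R(y_0, V)$ has positive $d_{nS}^*$-density along some coset $n_1 S_1$; I need to intersect this with the set $\{n : \rho^{(n)}(y_0) z_0 \in W\}$ and still retain positive density along a (possibly smaller) coset of a (possibly smaller) multiplicative subsemigroup. The mechanism for the second condition should be an equidistribution / unique ergodicity argument on the group extension: along the subsemigroup $S_1$ (or a finite-index multiplicative sub-object of it), the orbit of the cocycle sums equidistributes in $K$ with respect to Haar measure — this is exactly where distality (hence the compact group structure, hence unique ergodicity of the relevant factor after restricting to $(X,T^n)$ for appropriate $n$) is used. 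Passing to $(X,T^n)$ for a suitable $n$, and to a coset of the corresponding sub-semigroup, should kill the local obstructions (the analogue of the $4\N - 2$ example) and restore genuine equidistribution, so that the proportion of $m \in n_1 S_1$ (in a Følner sequence) with $\rho^{(m)} z_0 \in W$ is at least roughly $\mathrm{Haar}(W) > 0$. Multiplying the two density bounds — using that $d_{nS}^*$ of an intersection of a positive-density set with a set that is "equidistributed with positive density" along the Følner sequence is still positive, via a pigeonhole/averaging argument along the Følner sets — yields positive $d_{nS}^*(R(x,U))$ along the refined coset.

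The main obstacle is making the "equidistribution along a multiplicative subsemigroup" step precise and showing it is compatible with the Følner structure used to define $d_{nS}^*$. Ordinary equidistribution results for cocycles are along the full additive semigroup $\N$ (or along $\N$-orbits), whereas here I need to control $\rho^{(m)}(y_0)$ as $m$ ranges over a multiplicative Følner sequence inside $S_1$ — a genuinely multiplicative averaging, which does not directly reduce to classical Weyl/Furstenberg equidistribution. I expect one must exploit that $S_1$ is itself (after passing to a power) essentially a free abelian semigroup on the primes, write a generic element of the Følner set as $p_{i_1}^{e_1}\cdots p_{i_k}^{e_k}$, and analyze the cocycle sum along such a multiplicatively-structured sequence, likely by an iterated application of the single-prime equidistribution together with a Fubini-type argument over the exponents — and one must be careful that the local obstructions are handled uniformly so that the "$\eta$" produced does not degenerate as one climbs the (possibly transfinite) tower. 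A clean way to organize this may be to prove a standalone lemma: \emph{in a minimal distal system, for every non-empty open $U$ and every $x$ in a residual set, there is a multiplicative subsemigroup $S$ and $n$ such that $R(x,U) \cap nS$ is, in a suitable quantitative sense, "syndetic of positive density" along the Følner sequence of $nS$}, and then feed this into the averaging argument; isolating that lemma, and proving it by induction on the Furstenberg tower with the subsemigroup chosen to clear denominators at each isometric extension, is where the real work lies.
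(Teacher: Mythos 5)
Your route -- transfinite induction up the Furstenberg tower of isometric extensions -- is genuinely different from the paper's argument, and, as you yourself flag, it has a real gap at its center that I do not believe can be filled along the lines you sketch. The step you call ``equidistribution along a multiplicative subsemigroup'' is not a variant of classical Weyl/Furstenberg cocycle equidistribution; it is a qualitatively harder statement. Unique ergodicity of an isometric extension gives convergence of \emph{additive} Ces\`aro averages of $n \mapsto f(T^n y_0, \rho^{(n)}(y_0)z_0)$, and the proof leans on the telescoping identity $\rho^{(n+1)} = \rho(T^n y_0)\rho^{(n)}$, which is additive in $n$. The averages relevant to $d^*_{nS}$ run over dilated multiplicative F{\o}lner sets, and at multiplicatively related indices $m$ and $mm'$ the cocycle sums $\rho^{(m)}$ and $\rho^{(mm')}$ bear no usable relation to one another. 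Already in the toy case $Y=\{\mathrm{pt}\}$, $K=\T$, $\rho\equiv\alpha$, the question of whether $\{p_1^{e_1}\cdots p_k^{e_k}\alpha \bmod 1 : e_i \leq k\}$ equidistributes for \emph{every} irrational $\alpha$ is of Furstenberg $\times p \times q$ type and is not known. On top of this, you would need the equidistribution to persist after intersecting with the positive-density set $R(y_0,V)\cap n_1 S_1$ from the inductive hypothesis, which asks for correlation control that unique ergodicity does not provide; and the $\eta$ produced at each successor stage must be kept from degenerating across a possibly transfinite tower, a uniformity issue you flag but do not address and which has no obvious fix at limit ordinals.

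The paper avoids all of this. It does not use the Furstenberg tower at all; the only structure-theoretic input is at the \emph{rational topological Kronecker} level, through Furstenberg's facts that for a minimal distal system the factor map to the Kronecker factor is open and that each fiber admits a finite $\eps$-net that remains $\eps$-dense in the translated fibers. This gives \cref{lem:distalhavegoodmeasure}: for some $N,j$, the set $U_{N,j}$ is \emph{totally visible} in $(X_{N,j},T^N)$, meaning a single $T^N$-invariant measure assigns $U_{N,j}$ mass bounded below uniformly over all further clopen finite-index refinements. Total visibility, not any multiplicative equidistribution, is the quantitative input. It is fed into \cref{prop:existenceofmean} (via the diagonal-orbit-closure machinery of Section 4 and \cref{lem:joels-lemma}), which, for residually many $x$ and each finite $F\subseteq\N$, produces a large $F'\subseteq F$ and an \emph{additively} invariant mean $\lambda$ with $\lambda(R(x,U)/f)>\eta$ for all $f\in F'$ simultaneously; the purely combinatorial \cref{lem:gprichtranslates} then converts this family of additive lower bounds into a lower bound on $d^*_{nS}$. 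In short, the paper never averages along a multiplicative F{\o}lner set directly -- the multiplicative conclusion is extracted combinatorially from many simultaneous additive density bounds -- and the decomposition used is the clopen $d_N$-piece decomposition of $X$ under $T^N$, not the Furstenberg tower. If you wish to pursue your plan, the missing ingredient is precisely a ``for all points'' multiplicative-F{\o}lner equidistribution theorem for cocycles over distal bases, together with a uniformity-over-the-tower argument; neither appears to be available.
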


Sets with positive multiplicative density along a coset of a multiplicative subsemigroup contain an abundance of multiplicative configurations, including arbitrarily long geometric progressions and geo-arithmetic configurations; see \cref{thm:multszem,thm:geoarithmeticpatterns}.

Our final main result is purely combinatorial but indirectly concerns nilsystems, a subclass of distal systems that encompasses algebraic group extensions of group rotations.\footnote{A \emph{nilsystem} is a topological dynamical system $(X,T)$ where $X$ is a compact homogeneous space of a nilpotent Lie group $G$ and $T$ is a translation of $X$ by an element of $G$.} A subset of $\N$ is called \emph{$\ipr$}, $r \in \N$, if it contains a set of the form
\begin{align}\left\{ \sum_{i \in I} x_i \ \middle| \ \emptyset \neq I \subseteq \{1, \ldots, r\} \right\}, \quad x_1, \ldots, x_r \in \N.\end{align}
A subset of $\N$ is called \emph{$\iprstar$} if it has non-empty intersection with every $\ipr$ set in $\N$. Such sets arose first in the work of Furstenberg and Katznelson \cite{furstenbergkatznelsonipszem} on the multidimensional IP Szemer\'{e}di theorem and were recently used by Bergelson and Leibman \cite{BLiprstarcharacterization} to characterize nilsystems: roughly speaking, a system $\xt$ is a nilsystem if and only if for all non-empty, open $U \subseteq X$, there exists $r \in \N$ such that for every $x \in U$, the set $R(x,U)$ is $\ipr^*$.

The following theorem addresses the multiplicative properties of additive translates of $\ipr^*$ sets.

\begin{theorem}\label{thm:translatesofiprstararelarge}
Let $A \subseteq \N$ be an $\text{IP}_r^*$ set. For all $t \in \Z$, there exists a multiplicative subsemigroup $S$ of $\N$ and $n \in \N$ such that $d_{nS}^* \big(A + t \big) > 0$.
\end{theorem}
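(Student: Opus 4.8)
The plan is to first isolate the multiplicative content of the $\iprstar$ hypothesis, then reduce the general translate to the normalized cases $t=\pm 1$, and finally analyze those cases inside a congruence subsemigroup. Applying the hypothesis to the $\ipr$ set generated by $x_1=\cdots=x_r=m$ shows $A\cap\{m,2m,\ldots,rm\}\neq\emptyset$ for every $m\in\N$, i.e.\ $\N=\bigcup_{k=1}^{r}k^{-1}A$ where $k^{-1}A:=\{m : km\in A\}$; so $A$ is multiplicatively syndetic and, reading off the definition \eqref{eqn:densityinseminplusintromult} with the multiplicative translate $m=k$, one gets $\dtstar(A)\geq 1/r$. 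This settles \cref{thm:translatesofiprstararelarge} for $t=0$ with $S=\N$, $n=1$. Next, observe that for every $c\in\N$ the dilate $\tfrac1c A:=\{m:cm\in A\}$ is again $\iprstar$, since it meets an $\ipr$ set $E$ precisely when $A$ meets the $\ipr$ set $cE$. Combining this (with $c=|t|$) with the identity $(A+t)\cap|t|\N=|t|\cdot\big(\tfrac1{|t|}A+\operatorname{sgn}(t)\big)$ and the elementary fact that $\dstar_{|t|\mathcal C}(|t|X)=\dstar_{\mathcal C}(X)$ for any coset $\mathcal C$ of any multiplicative subsemigroup and $X\subseteq\mathcal C$, the theorem for general $t\neq 0$ reduces to the following normalized statement: \emph{for every $\iprstar$ set $B$, the set $B\pm 1$ has positive multiplicative upper Banach density along some coset of some multiplicative subsemigroup.}

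For the normalized statement I would work with the congruence subsemigroup $S_q:=\{m\in\N:m\equiv 1\pmod q\}$ for a modulus $q$ to be chosen, together with the monoid isomorphism $\phi\colon(\N_0,\ast_q)\xrightarrow{\ \sim\ }(S_q,\cdot)$ given by $\phi(j)=1+qj$, where $j\ast_q j':=j+j'+qjj'$. Since $(B+1)\cap S_q=(B\cap q\N)+1=q\,C+1=\phi(C)$ with $C:=\tfrac1q B$ (again $\iprstar$ by the previous paragraph), the problem becomes: show that $C$ is multiplicatively large — ideally syndetic — in the deformed monoid $(\N_0,\ast_q)\cong(S_q,\cdot)$. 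The $\iprstar$ property of $B$ enters as follows. For $j\geq 1$ set $v=qj$; the arithmetic progression $\{\,v+kq(v+1):0\leq k\leq r-1\,\}$ is contained in the $\ipr$ set generated by $x_1=v$, $x_2=\cdots=x_r=q(v+1)$, and one computes $v+kq(v+1)=q\,(k\ast_q j)$. Hence $B$ meeting this $\ipr$ set yields, for each $j$, either a ``good'' outcome — a witness $k\ast_q j\in C$ with $k<r$ — or the ``bad'' outcome that $B$ meets the dilate $q(v+1)\cdot\{1,\ldots,r-1\}$. (For $B-1$ one uses $x_2=\cdots=x_r=q(v-1)$ after arranging $v>1$, which is harmless for density.) If the good outcome occurred for every $j$ one would be done with $C$ syndetic via the finite witness set $\{0,\ldots,r-1\}$.

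The hard part will be the bad outcome, which a priori can persist for all $j$ lying in certain residue classes modulo $q$, so that the naive witness set does not certify syndeticity of $C$ in $(\N_0,\ast_q)$. The prototype $B=q_0\N$ — where, after dilation, $B\pm1$ actually contains the coset $S_{q_0}$ — suggests the remedy is a careful choice of $q$ (divisible by $r!$ and by high powers of the small primes) so that the finite-group quotients of $S_q$, combined with the crucial fact that \emph{every} dilate of $C$ is again $\iprstar$ — which lets one rerun the good/bad dichotomy at many scales — force $C$ to be multiplicatively syndetic in $(\N_0,\ast_q)$ after all; an alternative I would try is to build the subsemigroup $S$ directly via a compactness/ultrafilter argument so that it absorbs the scrambling caused by the $\pm1$ shift. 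I expect this conversion of the per-scale $\ipr$ dichotomy into genuine multiplicative syndeticity along a single fixed coset to be the technical heart of the argument; by contrast, the preceding reductions — $\iprstar\Rightarrow$ multiplicative syndeticity, dilation-invariance, reduction to $t=\pm1$, and transport into $(S_q,\cdot)$ — are routine.
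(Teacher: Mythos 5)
There is a genuine gap, and you have already flagged it yourself: your argument reduces to showing that the ``per-scale $\ipr$ dichotomy'' forces $C=B/q$ to be multiplicatively syndetic in $(\N_0,\ast_q)$, and you acknowledge that you cannot carry this out because the ``bad'' outcome may persist along whole residue classes. Your preliminary reductions (dilates of $\iprstar$ sets are $\iprstar$; the $t=0$ case with $\dtstar(A)\geq 1/r$; the reduction to $t=\pm 1$ via $(A+t)\cap|t|\N = |t|\cdot(A/|t|+\operatorname{sgn}(t))$; the monoid isomorphism $\phi(j)=1+qj$ from $(\N_0,\ast_q)$ to $\{m\equiv 1\bmod q\}$) are all correct, but they do not reach the heart of the theorem.

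The idea you are missing is the \emph{rank-minimization} trick, which is what lets the paper's proof close exactly the loop you got stuck on. Since $\rank(A/n)\le\rank(A)$ for all $n$, one can pick $N$ minimizing $\rank(A/N)$; call this minimum $s\le r$. Then for \emph{every} $f\in\N$ one has $\rank(A/fN)=s$ as well -- the rank cannot drop further. This stability is precisely what neutralizes your ``bad'' outcome: no matter which dilate $A/fN$ you are forced to consider, its $\iprstar$-rank is the same, so the same syndeticity bound applies. Concretely, from an $\ip_{s-1}$ set $G'\subseteq\N\setminus(A/fN)$ one shows (as in the paper's warm-up paragraph, using that $\rank(A/N)=s$) that at most $2^s$ translates of $(A/N-a)/f$ cover a cofinite subset of $\N$, uniformly in $a\in\Z$ and $f\in\N$. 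This gives, for every translation-invariant mean $\lambda$, the bound $\lambda\big((A-aN)/(Nf)\big)\geq 2^{-s}$.

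From there the paper does not try to prove multiplicative syndeticity in a deformed monoid as you do. Instead it feeds the uniform mean bound into a purely combinatorial lemma (\cref{lem:gprichtranslates}): by a Chinese-Remainder-Theorem construction it produces, for any finite $F\subseteq S_{N/(N,t)}$, an element $n$ of that subsemigroup and a positive-proportion subset $F'''\subseteq F$ with $nF'''\subseteq (A+t)/(N,t)$, yielding $\dstar_{(N,t)S_{N/(N,t)}}(A+t)\geq 2^{-(2s+2)}(N,t)/N$ for \emph{all} $t\in\Z$ simultaneously, with a single $N$. Your reduction to $t=\pm 1$ and the transport to $(\N_0,\ast_q)$ are therefore a detour: even if you could finish your syndeticity argument, you would likely get a subsemigroup depending on $t$, whereas the paper's route gives a single $N$ working for every translate. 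The essential point to take away is that the subtlety is not which monoid to work in, but that one must first pass to the dilate $A/N$ of \emph{stable rank} before running any scale-by-scale argument; without that stabilization, the $\iprstar$ rank can silently drop under the dilations your argument forces, and the bound degrades unboundedly.
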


Whereas the previous results rely on tools and techniques from dynamics, the statement and proof of \cref{thm:translatesofiprstararelarge} are entirely combinatorial. This imparts two advantages: we avoid the machinery necessary to work with nilsystems, and the result concerns a wider class of sets.\footnote{While every set of the form $R(x,U)$ in a minimal nilsystem is $\ipr^*$ for some $r \in \N$, not every $\ipr^*$ set in $\N$ contains a set of return times from a minimal nilsystem; see \cref{{ex:propertyofnilsystem}}.}  In particular, our result implies that return time sets in minimal nilsystems contain arbitrarily large geo-arithmetic configurations, hence arbitrarily long geometric progressions. When applied to sets of natural numbers that arise in polynomial Diophantine approximation, \cref{thm:translatesofiprstararelarge} yields the following corollary. Denote by $\{x\}$ the fractional part of $x \in \R$.

\begin{corollary}\label{cor:polyapprox}
Let $p_1, \ldots, p_k \in \R[x]$ be non-constant polynomials that are linearly independent in the following sense: for all $h_1, \ldots, h_k \in \Z$, not all zero, at least one of the non-constant coefficients of $\sum_{i=1}^k h_i p_i$ is irrational. Let $I_1, \ldots, I_k \subseteq [0,1)$ be sets that are open when $[0,1)$ is identified with the 1-torus. The set
\[A \defeq \big\{ n \in \Z \ \big| \ \text{for all } i \in \{1,\ldots,k\}, \ \{p_i(n)\} \in I_i \big\}\]
has positive multiplicative upper Banach density in a coset of a multiplicative subsemigroup of $\N$. As a consequence, for all $n \in \N$, there exist $a,c,d \in \N$ such that $\big\{ c (a+id)^j \ \big| \ 1 \leq i, j \leq n \big\} \subseteq A$.
\end{corollary}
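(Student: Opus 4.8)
The plan is to derive Corollary~\ref{cor:polyapprox} from Theorem~\ref{thm:translatesofiprstararelarge} by showing that the set $A$ (or an additive translate of it) is, up to intersecting with a residual obstruction, an $\text{IP}_r^*$ set for some $r \in \N$. The natural route is through the dynamical model of polynomial Diophantine approximation: the sequences $n \mapsto (\{p_1(n)\}, \ldots, \{p_k(n)\})$ are orbit-projections of a minimal nilsystem (indeed an affine unipotent system on a torus), and by the Bergelson--Leibman characterization of nilsystems quoted in the excerpt, return time sets of open sets in minimal nilsystems are $\text{IP}_r^*$ for some $r$. Concretely, the linear independence hypothesis on $p_1, \ldots, p_k$ is precisely the condition ensuring that the associated polynomial sequence equidistributes in $\mathbb{T}^k$ (via Weyl's theorem and the van der Corput trick), so that the orbit closure $X$ is all of $\mathbb{T}^k$ with $T$ the corresponding polynomial skew-product, which is a minimal nilsystem (a unipotent affine transformation of a torus).

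First I would set up this nilsystem: let $d = \max_i \deg p_i$ and realize the map $n \mapsto (\{p_1(n)\}, \ldots, \{p_k(n)\})$ as the orbit of a point under a unipotent affine map $T$ of $\mathbb{T}^N$ for suitable $N$ (encoding all the lower-order coefficients needed to make the iteration polynomial), so that there is a continuous $\pi : \mathbb{T}^N \to \mathbb{T}^k$ with $\pi(T^n x_0) = (\{p_1(n)\}, \ldots, \{p_k(n)\})$. The linear independence hypothesis guarantees, via a standard Weyl equidistribution argument, that $(X, T)$ — where $X$ is the orbit closure of $x_0$ — is minimal, and it is a nilsystem because unipotent affine maps of tori are translations on nilmanifolds. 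Then $U \defeq \pi^{-1}(I_1 \times \cdots \times I_k)$ is open, and by choosing any $x \in X$ with $\pi(x) \in I_1 \times \cdots \times I_k$ appropriately (say $x_0$ itself, after a harmless shift so that $x_0 \in U$; if $x_0 \notin U$ one passes to $A + t$ for a suitable $t$ with $T^t x_0 \in U$), the Bergelson--Leibman result gives $r \in \N$ with $R(x, U) \subseteq A + t$ being $\text{IP}_r^*$. Actually one should be slightly careful: $R(x,U)$ need not equal $A$, only be contained in it, which is exactly the right direction — a subset of $A+t$ being $\text{IP}_r^*$ forces $A+t \supseteq R(x,U)$ to be $\text{IP}_r^*$ as well.

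Once $A + t$ is known to be $\text{IP}_r^*$ for some $r$ and some integer $t$, Theorem~\ref{thm:translatesofiprstararelarge} applied with $-t$ in place of $t$ immediately yields a multiplicative subsemigroup $S$ of $\N$ and $n \in \N$ with $d_{nS}^*(A) = d_{nS}^*((A+t) + (-t)) > 0$. The final sentence of the corollary — the existence of geo-arithmetic configurations $\{c(a+id)^j : 1 \le i, j \le n\} \subseteq A$ for every $n$ — then follows directly from Theorem~\ref{thm:geoarithmeticpatterns} (sets of positive multiplicative density along a coset of a multiplicative subsemigroup contain arbitrarily large geo-arithmetic configurations), which the excerpt allows us to invoke.

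The main obstacle I anticipate is the bookkeeping in the first step: verifying carefully that the linear independence hypothesis stated in the corollary is exactly what is needed to put the polynomial orbit into a \emph{minimal} nilsystem, and tracking the additive shift $t$ so that the base point lands in the open set $U$. The equidistribution argument itself is classical, but the translation from ``linear independence of the $p_i$ in the stated sense'' to ``minimality of the skew-product on the orbit closure'' requires invoking the right version of the Weyl/van der Corput machinery and noting that minimality of a unipotent affine toral system is equivalent to unique ergodicity, which is equivalent to the relevant equidistribution. A secondary, more cosmetic point is confirming that affine unipotent maps of tori genuinely fall under the ``nilsystem'' umbrella as defined in the excerpt's footnote; this is standard (the group generated by $T$ and the torus translations is a two-step nilpotent Lie group acting transitively), but should be stated cleanly so that Bergelson--Leibman applies.
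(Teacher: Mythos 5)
Your proposal is correct and follows essentially the same route as the paper: realize $A$ as (a translate of) a return time set in a minimal nilsystem, invoke the Bergelson--Leibman characterization to obtain the $\ip_0^*$ property, then apply \cref{thm:translatesofiprstararelarge} and \cref{thm:geoarithmeticpatterns}. The only difference is that the paper compresses your entire first step — the construction of the unipotent affine toral system, the use of Weyl/van der Corput equidistribution to get minimality, and the application of Bergelson--Leibman — into a single citation to \cite[Theorem 6.14]{BGpaperonearxiv}, which states directly that a set of the form $A$ is a translated $\ipzero^*$ set; you have essentially reconstructed the content of that cited theorem, which is fine but not necessary given the reference. One small clarification: with the setup as you describe it, $R(T^t x_0, U)$ should in fact \emph{equal} $(A-t)\cap\N$ rather than merely be contained in it, though, as you note, containment in the direction you have is already enough to propagate the $\iprstar$ property, so this does not affect the argument.
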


\subsection{Motivation and historical context} \label{sec:historyandcontext}

Van der Waerden's theorem on arithmetic progressions \cite{vdw} is one of the most celebrated results in Ramsey Theory. An equivalent formulation due to Kakeya and Morimoto \cite[Theorem I]{kakeyamorimoto} states that every syndetic subset of $\N$ contains arbitrarily long arithmetic progressions. The multiplicative analogue of this result asserts that every \emph{multiplicatively syndetic} subset of $\N$ (that is, a set $A \subseteq \N$ for which there exists $N \in \N$ such that $A \cup A/2 \cup \cdots \cup A/N = \N$, where $A/n \defeq \{m \in \N \ | \ mn \in A\}$) contains arbitrarily long \emph{geometric progressions}.

The following long-standing open problem in Ramsey Theory features both of these additive and multiplicative notions and is the primary motivation for our work.

\begin{question}[{\cite{bbhsaddimpliesmult}}]\label{question:mainquestion}
Does every additively syndetic subset of $\N$ contain arbitrarily long geometric progressions?
\end{question}

Just as in other problems in Ramsey theory involving both addition and multiplication -- most notably the $\{x+y,xy\}$ problem that was recently resolved in \cite{moreiraxyproblem} -- analysis is complicated by the combination of addition and multiplication.  Until now, very little progress has been made on \cref{question:mainquestion}; in fact, it is still unknown whether or not syndetic subsets of $\N$ contain a square integer ratio.  Recent work in \cite{patilgeomsyndetic} addresses the set of integer ratios of elements of syndetic sets.

Being unable to make progress on the problem in its full generality, it is natural to restrict the class of syndetic subsets under consideration. Each of our main results concerns such a restriction: \cref{thm:mainthm} lends some evidence toward a positive answer to \cref{question:mainquestion} by showing that many syndetic sets of dynamical origin contain arbitrarily long geometric progressions; \cref{thm:mainthmwithtotallymin,thm:mainthmwithdistal} show that much more is true with further restrictions on the dynamics: syndetic sets arising from these systems have positive multiplicative density; and \cref{thm:translatesofiprstararelarge} shows that members of a combinatorially defined subclass of syndetic sets also have positive multiplicative density. \\

The idea to approach problems in Ramsey Theory and combinatorial number theory with tools from dynamics goes back to the work of Furstenberg \cite{foriginal} in the measure-theoretic setting and Furstenberg and Weiss \cite{furstenbergweiss} in the topological setting.  The basic idea is that the existence of combinatorial configurations in subsets of $\N$ can be reformulated in the language of dynamics to be about the recurrence of points and sets. Consider, for example, that the set $A \subseteq \N$ contains an arithmetic progression of length $k+1$ and step size $n$ if and only if
\begin{equation}
\label{eqn:discretemultiplerecurrence}
A \cap \big( A - n \big) \cap \big( A - 2n \big) \cap \cdots \cap \big( A - kn \big) \neq \emptyset.
\end{equation}
Results concerning the recurrence of sets in topological dynamical systems can be made to apply to sets in the positive integers via \emph{correspondence principles} which, roughly speaking, turn the would-be dynamical system $(\N,n \mapsto n+1)$ into a genuine one and convert the expression \eqref{eqn:discretemultiplerecurrence} into one similar to \eqref{eqn:topologicalmultiplerecurrence} below regarding the recurrence of open sets.

Exemplifying this approach, the following topological dynamical result implies (and, in fact, can be shown to be equivalent to) van der Waerden's theorem.

\begin{theorem}[{\cite[Theorem 1.5]{furstenbergweiss}}]\label{thm:dvdw}
Let $\xt$ be a minimal dynamical system. For all non-empty, open $U \subseteq X$ and for all $k \in \N$, there exists $n \in \N$ such that
\begin{equation}
\label{eqn:topologicalmultiplerecurrence}
U \cap T^{-n} U \cap \cdots \cap T^{-kn} U \neq \emptyset.
\end{equation}
\end{theorem}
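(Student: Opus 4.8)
The plan is to deduce \cref{thm:dvdw} from the \emph{multiple Birkhoff recurrence theorem} of Furstenberg and Weiss \cite{furstenbergweiss}: for any compact metric space and any finite family of commuting continuous self-maps $S_1,\ldots,S_k$, there is a point $z$ and a sequence $n_i\to\infty$ with $S_j^{n_i}z\to z$ for every $j\in\{1,\ldots,k\}$. Granting this, apply it to the commuting maps $S_j=T^j$ to obtain $z\in X$ and $n_i\to\infty$ with $T^{jn_i}z\to z$ for all $j\le k$. By minimality the forward orbit of $z$ is dense, so $T^mz\in U$ for some $m\ge 0$; put $w:=T^mz\in U$. Continuity of $T^m$ gives $T^{jn_i}w=T^m(T^{jn_i}z)\to T^mz=w$ for each $j\le k$, so, since $U$ is an open neighbourhood of $w$ and only finitely many indices are involved, for all large $i$ we have $T^{jn_i}w\in U$ for every $j\in\{0,1,\ldots,k\}$. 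Fixing such an $i$ and setting $n:=n_i\ge 1$ yields $w\in U\cap T^{-n}U\cap\cdots\cap T^{-kn}U$, which is therefore non-empty.

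It remains to indicate why multiple Birkhoff recurrence holds. Two standard reductions come first. By Zorn's lemma one may pass to a subsystem on which the (semi)group generated by $S_1,\ldots,S_k$ acts minimally, so that every forward orbit is dense. And it suffices to prove the finitary statement: for every $\eps>0$ there is \emph{some} $x$ and $n\ge 1$ with $d(S_j^nx,x)<\eps$ for all $j$. Indeed, if $W_\eps$ denotes the open set of such $x$, then minimality (which makes every forward orbit dense) together with the uniform continuity of a suitable finite family of maps from the (semi)group forces $W_\eps$ to be \emph{dense}; the Baire category theorem then makes $\bigcap_{m\ge 1}W_{1/m}$ a dense $G_\delta$, and any point of it is simultaneously recurrent for all the $S_j$.

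The finitary statement is then proved by induction on $k$. The base case $k=1$ is ordinary Birkhoff recurrence: any point of a minimal subsystem of $(X,S_1)$ is recurrent. For the inductive step --- this is the argument of Furstenberg and Weiss --- one feeds the $(k-1)$-fold statement, applied to a related family of $k-1$ commuting maps (in the invertible case, the maps $S_jS_k^{-1}$ for $j<k$), into a compactness-and-pigeonhole scheme: one assembles a sufficiently long chain of successive near-returns, uses that $X$ is covered by finitely many $\eps$-balls to find two terms of the chain lying in a common ball, and then composes the intervening steps --- controlling the accumulated error via commutativity and uniform continuity --- to extract a genuine simultaneous near-return. This last induction is the crux of the matter: it is a dynamical transcription of the pigeonhole argument behind van der Waerden's theorem, and its one delicate point is the order of quantifiers in the error bookkeeping, namely choosing the successive tolerances and the length of the chain so that the finitely many compositions ultimately concatenated still land within $\eps$ of one another. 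Everything else --- the passage to a minimal subsystem, the density-plus-Baire upgrade from the finitary to the infinitary statement, and the transfer back to an arbitrary open set $U$ --- is routine.

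We note in passing that the statement also follows immediately from the combinatorial facts recalled above: in a minimal system $R(x,U)$ is syndetic, and van der Waerden's theorem in the Kakeya--Morimoto form then produces an arithmetic progression $a,a+n,\ldots,a+kn\subseteq R(x,U)$, whence $T^ax$ lies in the stated intersection. The interest of \cref{thm:dvdw}, however, is precisely that it is \emph{equivalent} to van der Waerden's theorem, so it is the self-contained dynamical argument above that is of substance.
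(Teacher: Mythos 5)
The paper does not prove \cref{thm:dvdw}; it cites it to Furstenberg and Weiss and only proves (at the start of \cref{sec:proofofmainthm}) the \emph{equivalence} of \cref{thm:dvdw} with \cref{thm:dvdwreform}, by showing each is equivalent to residuality of the set of multiply recurrent points. So there is no in-paper proof to compare yours against; I will assess your argument on its own terms.

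Your reduction of \cref{thm:dvdw} to multiple Birkhoff recurrence is clean and correct: take a multiply recurrent point $z$ for $T,T^2,\dots,T^k$, push it into $U$ by minimality of $(X,T)$ to get $w=T^mz\in U$, and use commutativity and continuity of $T^m$ to transfer the simultaneous near-return to $w$; choosing $i$ large enough that all $k+1$ of $w, T^{n_i}w,\dots,T^{kn_i}w$ lie in the open neighbourhood $U$ finishes it. Your sketch of multiple Birkhoff recurrence itself has the right architecture -- pass to a subsystem minimal under the whole semigroup, reduce to the finitary $\eps$-statement, upgrade a single $\eps$-recurrent point to a dense open set $W_\eps$ via the finite-subcover-plus-uniform-continuity trick, intersect over $\eps=1/m$ with Baire, and run the chain-and-pigeonhole induction -- and this is genuinely the Furstenberg--Weiss argument, not a shortcut.

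Two remarks. First, a small but real gap: the inductive step as you describe it uses $S_jS_k^{-1}$, hence needs invertibility, whereas the paper's standing convention allows $T$ to be merely continuous. You flag this with the parenthetical ``in the invertible case'' but do not close the gap. The standard repair is exactly the device this paper itself uses in proving \cref{thm:mainthm}: pass to the topological natural extension $(W,T)$ of \cref{def:naturalext}, which is invertible and still minimal by \cref{lem:naturalextensionisminimal}, prove the recurrence there, and push a recurrent point of $W$ down through the factor map (a multiply recurrent $w\in W$ has multiply recurrent image $\pi(w)\in X$ by continuity of $\pi$). You should state this reduction rather than leave it implicit. Second, your closing paragraph is a correct and worthwhile observation: the Kakeya--Morimoto formulation of van der Waerden together with syndeticity of $R(x,U)$ gives the result in one line, but this would render the equivalence proved in \cref{sec:proofofmainthm} circular, so the dynamical proof is the one of substance here. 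That distinction is exactly right.
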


Interestingly, \cref{thm:dvdw} can be reformulated in terms of return time sets and multiplicative density. A set $A \subseteq \N$ has multiplicative upper Banach density equal to 1, $\dtstar(A) = 1$, if and only if for all finite $F \subseteq \N$, there exists $n \in \N$ such that $nF \subseteq A$. (This equivalence is not apparent from (\ref{eqn:densityinseminplusintromult}) but is immediate from \cref{def:multdensity}. Such sets are called \emph{multiplicatively thick.}) We demonstrate the equivalence between \cref{thm:dvdw} and the following theorem at the beginning of \cref{sec:proofofmainthm}.

\begin{theorem}\label{thm:dvdwreform}
Let $\xt$ be a minimal dynamical system. There exists a residual set $X' \subseteq X$ such that for all $x \in X'$ and all non-empty, open $U \subseteq X$   containing $x$, the set $R(x,U)$ satisfies $\dtstar(R(x,U)) = 1$, i.e., it is multiplicatively thick.
\end{theorem}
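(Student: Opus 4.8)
The plan is to deduce \cref{thm:dvdwreform} from the topological van der Waerden theorem \cref{thm:dvdw} by a Baire category argument, exploiting the characterization of multiplicative thickness recalled just above the statement: a set $A \subseteq \N$ satisfies $\dtstar(A) = 1$ if and only if for every finite $F \subseteq \N$ there exists $n \in \N$ with $nF \subseteq A$. Thus it suffices to produce a residual $X' \subseteq X$ such that for every $x \in X'$, every open $U \ni x$, and every $k \in \N$, there is $n \in \N$ with $\{n, 2n, \ldots, kn\} \subseteq R(x,U)$.

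Fix a countable base $\{U_i\}_{i \in \N}$ for the topology of the compact metric space $X$. For $i, k \in \N$, set
\[
  E_{i,k} \defeq U_i \cap \bigcup_{n \in \N} \big( T^{-n} U_i \cap T^{-2n} U_i \cap \cdots \cap T^{-kn} U_i \big),
\]
so that $E_{i,k}$ is open and $y \in E_{i,k}$ exactly when $y \in U_i$ and $\{n, 2n, \ldots, kn\} \subseteq R(y, U_i)$ for some $n$. The first step is to verify that $E_{i,k}$ is dense in $U_i$: given a nonempty open $V \subseteq U_i$, apply \cref{thm:dvdw} to the system $\xt$, the open set $V$, and the integer $k$ to obtain $n \in \N$ and a point $z \in V \cap T^{-n}V \cap \cdots \cap T^{-kn}V$; then $z \in V \subseteq U_i$ and $T^{jn} z \in V \subseteq U_i$ for $j = 1, \ldots, k$, whence $z \in E_{i,k} \cap V$. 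It follows that $G_{i,k} \defeq E_{i,k} \cup (X \setminus \overline{U_i})$ is open and dense in $X$: any nonempty open $W$ either meets the open set $X \setminus \overline{U_i}$ or satisfies $W \subseteq \overline{U_i}$, in which case $W \cap U_i$ is a nonempty open subset of $U_i$ and hence meets $E_{i,k}$.

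By the Baire category theorem, $X' \defeq \bigcap_{i, k \in \N} G_{i,k}$ is residual. To conclude, fix $x \in X'$, a nonempty open $U$ containing $x$, and a finite nonempty $F \subseteq \N$; put $k \defeq \max F$ and choose $i$ with $x \in U_i \subseteq U$. Since $x \in U_i$, we have $x \notin X \setminus \overline{U_i}$, so membership in $G_{i,k}$ forces $x \in E_{i,k}$; hence there is $n \in \N$ with $T^{jn} x \in U_i \subseteq U$ for all $j \in \{1, \ldots, k\}$, and in particular $nF \subseteq R(x,U)$. As $F$ was arbitrary, $R(x,U)$ is multiplicatively thick and so $\dtstar(R(x,U)) = 1$. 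For the promised equivalence in the other direction, one recovers \cref{thm:dvdw} by picking any point of the dense set $X' \cap U$, which witnesses \eqref{eqn:topologicalmultiplerecurrence} for every $k$ because it lies in $U$.

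I do not expect a genuine obstacle once \cref{thm:dvdw} is available; the two points that require care are the topological bookkeeping that upgrades ``dense in $U_i$'' to a truly residual subset of $X$ — handled by adjoining $X \setminus \overline{U_i}$ — and the translation, recorded in the excerpt, between the equality $\dtstar(A)=1$ and the combinatorial notion of multiplicative thickness, which is precisely what reframes this recurrence statement as one about multiplicative density.
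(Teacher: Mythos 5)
Your proof is correct and follows essentially the same route as the paper: deduce \cref{thm:dvdwreform} from \cref{thm:dvdw} by a Baire category argument, building dense open sets from the conclusion of the topological van der Waerden theorem and intersecting them. The paper packages this slightly differently, factoring both directions of the equivalence through the intermediate notion of multiply (i.e.\ $k$-)recurrent points and showing that the residual set $X'$ can be taken to be the set $E$ of all multiply recurrent points, which is basis-independent and intrinsically dynamical; your $X'=\bigcap_{i,k}G_{i,k}$ depends on a chosen countable base and on the trick of adjoining $X\setminus\overline{U_i}$, which is a minor presentational difference rather than a different idea. Your argument that $W\subseteq\overline{U_i}$ forces $W\cap U_i\neq\emptyset$ is fine (the boundary of an open set has empty interior), and the final passage from multiplicative thickness to $\dtstar(R(x,U))=1$ is exactly the characterization the paper invokes.
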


Though \cref{thm:dvdwreform} strongly resembles \cref{thm:mainthm} -- the set $U$ is not required to be a neighborhood of the point $x$ in \cref{thm:mainthm} -- one cannot hope to easily derive the latter from the former by translating the return time sets. Evidence for this is given by the fact that there are examples of sets which are multiplicatively large but whose additive translates are all multiplicatively very small; one can construct, for example, a multiplicatively thick set $A \subseteq \N$ with the property that for all $t \in \Z \setminus \{0\}$, the set $A+t$ has zero multiplicative density in all cosets of all non-trivial multiplicative subsemigroups of $\N$.

The following theorem, a result of Glasner's reformulated in a similar vein, is an improvement to \cref{thm:dvdw} in the case that $\xt$ is weakly mixing.  A system $\xt$ is \emph{weakly mixing} if the system $(X^2, T \times T)$ contains a point with a dense forward orbit. A minimal, weakly mixing system is totally minimal, so our \cref{thm:mainthmwithtotallymin} can be understood to make less of an assumption on the dynamics and arrive at a similar, but weaker, conclusion.

\begin{theorem}[{\cite[Corollary 2.5]{glasnertopergodicdecomp}}]\label{thm:glasnerthickreturns}
Let $\xt$ be a minimal, weakly mixing dynamical system. There exists a residual set $X' \subseteq X$ such that for all $x \in X'$ and all  non-empty, open sets $U \subseteq X$, the set $R(x,U)$ satisfies $\dtstar(R(x,U)) = 1$.
\end{theorem}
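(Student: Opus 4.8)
The plan is to deduce \cref{thm:glasnerthickreturns} from a transitivity theorem of Glasner by way of the characterization of multiplicative thickness recalled just above \cref{thm:dvdwreform}. First, recall from \cref{def:multdensity} that a set $A \subseteq \N$ satisfies $\dtstar(A) = 1$ if and only if $A$ is multiplicatively thick, i.e.\ for every finite $F \subseteq \N$ there exists $n \in \N$ with $nF \subseteq A$. Unwinding the definition of the return time set, the assertion $\dtstar\big(R(x,U)\big) = 1$ is therefore equivalent to: for every finite $F \subseteq \N$ there is some $n \in \N$ with $T^{fn}x \in U$ for all $f \in F$. So the goal reduces to producing a single residual set $X' \subseteq X$, depending only on the system, such that every $x \in X'$ has this property simultaneously for all non-empty open $U \subseteq X$ and all finite $F \subseteq \N$.

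The key input is \cite[Corollary 2.5]{glasnertopergodicdecomp}: since $\xt$ is minimal and weakly mixing, for each $N \in \N$ the product system $\big(X^N, T \times T^2 \times \cdots \times T^N\big)$ is transitive, and moreover there is a residual set $X_N \subseteq X$ such that for every $x \in X_N$ the diagonal point $(x,x,\ldots,x) \in X^N$ has a dense forward orbit under $T \times T^2 \times \cdots \times T^N$. I would then set $X' \defeq \bigcap_{N \in \N} X_N$, which is residual (a countable intersection of residual sets) and manifestly independent of $U$ and $F$; this is the candidate residual set. (If $X$ is a single point the theorem is trivial; otherwise $X$ is infinite and, being minimal, has no isolated points, so $X^N$ is perfect and a dense forward orbit remains dense after discarding the time-$0$ term, which lets one take the relevant return times to be positive.)

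With $X'$ in hand, the conclusion is a short verification. Fix $x \in X'$, a non-empty open $U \subseteq X$, and a finite $F \subseteq \N$, and put $N \defeq \max F$. Form the non-empty open box $B_1 \times \cdots \times B_N \subseteq X^N$ with $B_i \defeq U$ for $i \in F$ and $B_i \defeq X$ for $i \in \{1,\ldots,N\} \setminus F$. Since $x \in X_N$, the forward orbit of $(x,\ldots,x)$ under $T \times T^2 \times \cdots \times T^N$ is dense and hence meets this box along some $n \geq 1$; for such an $n$ one has $T^{fn}x \in U$ for every $f \in F$, that is $nF \subseteq R(x,U)$. As $F$ ranged over all finite subsets of $\N$, this shows $R(x,U)$ is multiplicatively thick, i.e.\ $\dtstar\big(R(x,U)\big) = 1$.

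Given that we are quoting Glasner's result, there is no substantial obstacle; the step that requires a little care is the uniformity of $X'$ — it must be chosen before $U$ and $F$, which is why we intersect Glasner's residual sets over all $N$ rather than invoking transitivity separately for each configuration — together with checking that the statement of \cite[Corollary 2.5]{glasnertopergodicdecomp}, phrased for the powers $T, T^2, \ldots, T^N$ (or for arbitrary distinct powers), supplies exactly the simultaneous-return data needed for an arbitrary finite $F$, which is handled by the choice of box above, constraining only the coordinates indexed by $F$. The genuinely hard content, hidden inside Glasner's corollary, is the structure-theoretic fact that weak mixing of a minimal system forces $T, T^2, \ldots, T^N$ to act independently enough that the product is transitive with the diagonal among its transitive points; were one to reprove \cref{thm:glasnerthickreturns} from scratch, that is where essentially all the work would lie.
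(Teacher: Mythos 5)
The paper does not actually prove \cref{thm:glasnerthickreturns}; it cites it directly as a reformulation of \cite[Corollary 2.5]{glasnertopergodicdecomp}, so there is no in-paper proof to compare against. Your argument correctly carries out that reformulation: you identify the characterization $\dtstar(A)=1 \iff A$ is multiplicatively thick (stated just above \cref{thm:dvdwreform}), quote Glasner's result that for a minimal weakly mixing system a residual set of points $x$ have $(x,\ldots,x)$ with dense $T\times T^2 \times \cdots \times T^N$-orbit in $X^N$, intersect over $N$ to get a residual set $X'$ chosen before $U$ and $F$, and then extract $n$ with $nF \subseteq R(x,U)$ by taking an open box whose factors are $U$ in the coordinates indexed by $F$ and $X$ elsewhere. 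The remark about discarding the time-$0$ term to force $n \geq 1$ is the right fix (minimality of a non-trivial system gives a perfect space, so a dense set stays dense after removing one point). This is exactly the intended derivation and matches the role the paper assigns to Glasner's corollary; the only caveat, which you flag, is that the substance lives in Glasner's corollary itself, and your argument is correct provided you quote that corollary in its actual form (it in fact already supplies a single residual set working for all $N$ simultaneously, making the countable intersection step unnecessary but harmless).
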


These theorems exemplify the historical precedent that motivated our approach to \cref{question:mainquestion} by considering syndetic sets arising in dynamics. As \cref{question:mainquestion} is purely combinatorial, our ultimate goal is to understand the multiplicative configurations contained in arbitrary syndetic subsets of the natural numbers. If one is looking for additive configurations in syndetic sets then this is achieved historically via dynamics by making use of translation invariance: since arithmetic progressions and additive density are translation invariant, one can transfer the problem of finding such configurations (as in van der Waerden's theorem) to an analogous dynamical problem on symbolic shift space. The same approach does not work as easily for multiplicative configurations. The fact that geometric progressions and multiplicative density are not translation invariant presents the most serious obstacle faced in this work.

\subsection{Outline of the paper}

In \cref{sec:defs}, we gather definitions and prove some initial lemmas on additive and multiplicative density, topological and measurable dynamics, and set-valued maps. We define the rational topological Kronecker factor of a system in \cref{sec:ratkronanddistallemma} and prove a key lemma about distal systems. In \cref{sec:dynamicsondiag}, we establish some preliminary results concerning dynamics on the orbit closure of the diagonal.  This is followed by proofs of the main results, \cref{thm:mainthm}, \cref{thm:mainthmwithtotallymin,thm:mainthmwithdistal}, and \cref{thm:translatesofiprstararelarge}, in Sections~\ref{sec:proofofmainthm}, \ref{sec:proofofdistalresults}, and \ref{sec:combresults}, respectively. We conclude the work with \cref{sec:concludingremarks,sec:remarksandquestions} by exhibiting some syndetic sets which do not arise from dynamics and collecting some questions for further consideration.

\subsection{Acknowledgements}

Thanks goes to Joel Moreira for permission to include his previously unpublished \cref{lem:joels-lemma0,lem:joels-lemma} in this paper. Gratitude is also extended to the referees for a number of helpful comments and corrections.

\section{Definitions and preliminary results}\label{sec:defs}

In this section we gather definitions and preliminary results that will be necessary later on. Denote by $\N$ the set of positive integers and by $\N_0$ the set $\N \cup \{0\}$.

\subsection{Set algebra, additive and multiplicative density}

For $A \subseteq \N$ and $n \in \N$, define
\[A-n \defeq \{m \in \N \ | \ m+n \in A\} \text{ and } A / n \defeq \{m \in \N \ | \ mn \in A\}.\]
The set $A$ is \emph{syndetic} if there exists a finite set $F \subseteq \N$ for which
\[A-F \defeq \bigcup_{n \in F} (A-n) = \N.\]
This is equivalent to the set $A$ having bounded gaps: if $A = \{a_1 < a_2 < \cdots\}$, then $A$ is syndetic if and only if $\sup_{i \in \N} (a_{i+1}-a_i)$ is finite.

A \emph{mean on} $\N$ is a positive linear functional of norm 1 on $B(\N)$, the Banach space of bounded, real-valued functions on $\N$ with the supremum norm. A mean $\lambda$ is \emph{(additively) translation invariant} if for all $f \in B(\N)$ and all $m \in \N$, $\lambda \big( n \mapsto f(n + m) \big) = \lambda ( f )$. Abusing notation, for $A \subseteq \N$, we write $\lambda(A)$ to mean $\lambda(\one_A)$, where $\one_A \in B(\N)$ is the indicator function of $A$; if $\lambda$ is translation invariant, then for all $m \in \N$, $\lambda(A-m) = \lambda(A+m) = \lambda(A)$.

The following is an easy consequence of the pigeonhole principle that will be used repeatedly throughout this work.

\begin{lemma}\label{lem:meanpigeon}
Let $\lambda$ be a mean on $\N$ and $A_1, \ldots, A_k \subseteq \N$. If $\eta > 0$ is such that for all $i \in \{1,\ldots,k\}$, $\lambda(A_i) > \eta$, then there exists $n \in \N$ for which $\big|\{ 1 \leq i \leq k \ | \ n \in A_i \} \big| > \eta k$.
\end{lemma}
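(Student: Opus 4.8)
The plan is to run the pigeonhole principle at the level of functions rather than sets. First I would set $f \defeq \sum_{i=1}^k \one_{A_i}$, which lies in $B(\N)$ since it is bounded (it takes values in $\{0,1,\ldots,k\}$), and observe that for each $n \in \N$,
\[
f(n) \;=\; \big|\{\, 1 \le i \le k \ | \ n \in A_i \,\}\big|.
\]
By linearity of $\lambda$ and the hypothesis $\lambda(A_i) > \eta$ for every $i$, one has $\lambda(f) = \sum_{i=1}^k \lambda(\one_{A_i}) > \eta k$.

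Next I would argue by contradiction: suppose $f(n) \le \eta k$ for every $n \in \N$, i.e.\ $f \le \eta k \cdot \one$ pointwise on $\N$. Since $\lambda$ is a positive functional it is monotone, so from $\eta k \cdot \one - f \ge 0$ we get $\lambda(\eta k \cdot \one) - \lambda(f) = \lambda(\eta k \cdot \one - f) \ge 0$; moreover $\lambda(\one) \le \|\lambda\|\,\|\one\| = 1$. Combining these gives $\lambda(f) \le \eta k\,\lambda(\one) \le \eta k$, contradicting $\lambda(f) > \eta k$. Hence there must exist $n \in \N$ with $f(n) > \eta k$, which is precisely the claim. (Since $f$ is integer-valued one even gets $\big|\{\,1 \le i \le k \ | \ n \in A_i\,\}\big| \ge \lfloor \eta k \rfloor + 1$, but this is not needed.)

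There is essentially no obstacle here; the only step worth a word of justification is the monotonicity of $\lambda$ together with the bound $\lambda(\one) \le 1$, both of which follow immediately from $\lambda$ being a positive linear functional of norm $1$ on $B(\N)$.
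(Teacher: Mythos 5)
Your argument is correct and is essentially the paper's: the paper sets $g(n) = \frac{1}{k}\sum_{i=1}^k \one_{A_i}(n)$ and concludes from $\lambda(g) > \eta$ and positivity that some $n$ has $g(n) > \eta$, which is exactly your computation up to dividing by $k$. The only difference is cosmetic — you spell out via the contrapositive (monotonicity plus $\lambda(\one) \le 1$) the standard fact that a mean is bounded above by the supremum, which the paper leaves implicit.
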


\begin{proof}
Define $g(n) = \sum_{i=1}^k \one_{A_i}(n) / k$. Because $\lambda$ is positive and $\lambda( g ) > \eta$, there exists $n \in \N$ for which $g(n) > \eta$, as was to be shown.
\end{proof}

A \emph{multiplicative subsemigroup} $S$ of $\N$ is a subset of $\N$ that is closed under multiplication, and a \emph{coset} of $S$ is a set of the form $nS$ for $n \in \N$. The multiplicative subsemigroups that will appear most frequently in this paper are
\[S_N := \big\{n \in \N \ \big| \ (n,N) = 1 \big\},\]
the natural numbers coprime to a given positive integer $N \in \N$.

\begin{definition}\label{def:multdensity}
Let $S$ be a multiplicative subsemigroup of $\N$, $n \in \N$, and $A \subseteq nS$. The \emph{multiplicative upper Banach density} of $A$ in $nS$ is
\[\dstar_{nS}(A) = \sup \big\{ \alpha \geq 0 \ \big| \ \forall F \subseteq nS \text{ finite}, \ \exists s \in S, \ |sF \cap A| \geq \alpha|F| \big\}.\]
When $S = \N$, we write $\dstar_\times$ instead of $\dstar_\N$. When $A \subseteq \N$ is not a subset of $nS$, the symbol $\dstar_{nS}(A)$ is understood to mean $\dstar_{nS}(A \cap nS)$.  Note that for all $A \subseteq \N$, $\dstar_{nS}(A) = \dstar_S(A/n)$.
\end{definition}

\begin{remark}\label{rmk:densityequivalent}
The upper Banach density $\dstar_S$ that was just defined for multiplicative subsemigroups $S$ is equivalent to the upper Banach density defined via multiplicative F{\o}lner sequences for $(S,\cdot)$ (as in (\ref{eqn:densityinseminplusintromult}) in the case of $(\N,\cdot)$) or multiplicatively invariant means on $S$. For a proof, see \cite[Theorem 3.5]{BGpaperonearxiv}.
\end{remark}

A set $A \subseteq \N$ is \emph{GP-rich} if it contains arbitrarily long geometric progressions, subsets of the form $\{nm,nm^2,\ldots,nm^\ell\}$. The following theorem is a simple consequence of \cref{def:multdensity} and Szemer\'edi's theorem \cite{szemeredi} on arithmetic progressions.

\begin{theorem}\label{thm:multszem}
Let $S$ be a multiplicative subsemigroup of $\N$, $n \in \N$, and $A \subseteq nS$. If $\dstar_{nS}(A) > 0$, then $A$ is GP-rich.
\end{theorem}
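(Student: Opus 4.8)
The plan is to import Szemer\'edi's theorem into the multiplicative world by encoding ordinary addition inside the powers of a single element of $S$. Concretely, I would fix some $s \in S$ with $s \ge 2$ (we may assume such an element exists, since otherwise $nS$ is finite), set $\beta \defeq \dstar_{nS}(A) > 0$, and aim to exhibit, for an arbitrary prescribed length $\ell \in \N$, a geometric progression of length $\ell$ inside $A$.

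The first step is to feed a ``geometric box'' into \cref{def:multdensity}. For $N \in \N$, take $F_N \defeq \{\, n s^j \ | \ 1 \le j \le N \,\}$; this lies in $nS$ precisely because $S$ is closed under multiplication, which is the one place that hypothesis gets used (and indexing from $j = 1$ rather than $j = 0$ is what lets me avoid assuming $1 \in S$). Since the property defining $\dstar_{nS}(A)$ as a supremum is monotone in the density parameter and $\beta/2 < \beta$, applying it to $F = F_N$ produces an element $t \in S$ (the element called $s$ in \cref{def:multdensity}, renamed here to avoid a clash) with $|t F_N \cap A| \ge (\beta/2)N$; note $|F_N| = |tF_N| = N$ because $j \mapsto t n s^j$ is injective.

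The second step is Szemer\'edi's theorem. Setting $J \defeq \{\, j \in \{1, \ldots, N\} \ | \ tns^j \in A \,\}$, we have $|J| \ge (\beta/2) N$, so choosing $N$ large enough in terms of $\beta$ and $\ell$ (this is exactly the input from \cite{szemeredi}) yields an arithmetic progression $j_0, j_0 + d, \ldots, j_0 + \ell d$ contained in $J$. Unwinding the definition of $J$, with $c \defeq tns^{j_0}$ and $m \defeq s^d \ge 2$ this gives
\[ \{\, c m, \, c m^2, \, \ldots, \, c m^\ell \,\} \;=\; \{\, tns^{j_0 + id} \ | \ 1 \le i \le \ell \,\} \;\subseteq\; A, \]
a geometric progression of exactly the shape $\{nm, nm^2, \ldots, nm^\ell\}$ and of the required length; since $\ell$ was arbitrary, $A$ is GP-rich.

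I do not expect a genuine obstacle here: the entire argument is the observation that $\{s, s^2, s^3, \ldots\}$ is a multiplicative copy of $(\N,+)$ sitting inside $S$, that \cref{def:multdensity} hands us a slice of $A$ of positive (additive) density along such a copy, and that Szemer\'edi's theorem then finishes the job. The only points requiring any care are the membership $F_N \subseteq nS$ (where closure of $S$ under multiplication enters) and matching the precise form of a geometric progression as defined in the paper, both of which are routine.
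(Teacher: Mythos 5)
Your proposal is correct and is essentially the paper's own proof: both arguments take powers of a single element of $S$ to form a geometric ``interval'' $F \subseteq nS$, use the supremum in \cref{def:multdensity} to find a multiplicative shift of $F$ meeting $A$ in proportion bounded below by a positive constant, and then apply Szemer\'edi's theorem to the resulting set of exponents to extract an arithmetic progression that exponentiates back to a geometric progression in $A$. The only difference is cosmetic (you fix the length $\ell$ after choosing the density threshold, while the paper fixes $L = L(\eps,\ell)$ up front), and your explicit remark that one may take $s \ge 2$ and that $j \mapsto tns^j$ is injective makes a point the paper leaves implicit.
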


\begin{proof}
Let $0 < \eps < \dstar_{nS}(A)$ and $\ell \in \N$. By Szemer\'edi's theorem, there exists $L \in \N$ so that all subsets of $\{1, \ldots, L\}$ with relative density at least $\eps$ contain an arithmetic progression of length $\ell$. It follows that for all $m, s \in \N$, all subsets of $s \{m,m^2,\ldots,m^L\}$ of relative density at least $\eps$ contain a geometric progression of length $\ell$.

Fix $m \in S$, and put $F=\{nm,nm^2,\ldots,nm^L\} \subseteq nS$. By the definition of multiplicative upper Banach density, because $\dstar_{nS}(A) > \eps$, there exists $s \in S$ such that the set $A$ has relative density at least $\eps$ in $\{snm,snm^2,\ldots,snm^L\}$; in particular, $A$ contains a geometric progression of length $\ell$.
\end{proof}

In fact, sets with positive multiplicative density contain much richer combinatorial configurations than simply geometric progressions. Bergelson \cite{bmultlarge} showed that a set $A \subseteq \N$ with positive multiplicative density contains \emph{geo-arithmetic configurations} such as the ones appearing in the following theorem.  We will use the following extension of his result for applications in this paper.

\begin{theorem}\label{thm:geoarithmeticpatterns}
Let $n, N \in \N$, and $A \subseteq nS_N$. If $\dstar_{nS_N}(A) > 0$, then for all $\ell \in \N$, there exist $a,c,d \in \N$ such that $\big\{ c (a+id)^j \ \big| \ 1 \leq i, j \leq \ell \big\} \subseteq A$.
\end{theorem}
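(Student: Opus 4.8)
The plan is to reduce the assertion about geo-arithmetic configurations $\big\{ c(a+id)^j \ \big| \ 1 \leq i, j \leq \ell \big\}$ to an additive combinatorial statement by taking logarithms, in the spirit of the proof of \cref{thm:multszem}, but this time invoking the polynomial Szemer\'edi theorem of Bergelson and Leibman rather than Szemer\'edi's theorem. First I would observe that for a fixed base $m \in S_N$ and fixed $a, d$, the set $\{ c(a+id)^j \ | \ 1 \leq i,j \leq \ell \}$, after dividing by $c$ and writing everything multiplicatively, is the image under $x \mapsto m^x$ of an additive configuration built from $\{ j \log_m(a+id) \}$. More precisely, the key point is that geometric progressions with \emph{integer} ratio inside $nS_N$ correspond, via $n m^k \mapsto k$, to arithmetic progressions; so a set $A \subseteq n S_N$ with $\dstar_{nS_N}(A) > 0$ pulls back (along a suitable dilated F{\o}lner sequence of powers of a fixed $m \in S_N$, using \cref{def:multdensity} and \cref{rmk:densityequivalent}) to a set $B \subseteq \N_0$ of positive additive upper Banach density.

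The main step is then to locate the right additive configuration in $B$ whose $m$-th power realizes a geo-arithmetic configuration in $A$. Here is where I would be careful: the configuration $c(a+id)^j$ is \emph{not} a polynomial configuration in a single variable over $\N$ when $a, d$ range freely, because of the logarithm. The trick used by Bergelson is to take $a, d$ to be powers of $m$ themselves: if we look only at $a + id$ with $a = m^u$ and restrict $d$ appropriately, the exponents become controllable. Concretely, I would apply the polynomial Szemer\'edi theorem (Bergelson--Leibman) to $B$ with the family of polynomials $p_{i,j}(x) = j \cdot (\text{something linear in } x)$ — in fact the cleanest route is: the set $B \subseteq \N_0$ of positive density contains, for every $\ell$, a configuration $\{ u + j \cdot v(a+id) \ | \ 1 \le i, j \le \ell\}$ is still not polynomial. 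So instead I would follow Bergelson's original argument directly: exponentiate a polynomial configuration. Applying polynomial Szemer\'edi to $B$ with polynomials $q_{i,j}(x,y) = x + j\big(\text{fixed integers depending on } i\big)\cdot y$ is linear, which only gives geometric-times-geometric, not geo-arithmetic. The honest resolution — and the one I expect the authors use — is that Bergelson's theorem in \cite{bmultlarge} already gives geo-arithmetic configurations $\{c(a+id)^j\}$ inside any multiplicatively large subset of $\N$; so the real content here is simply to transfer that from $(\N, \cdot)$ to the coset $n S_N$ of the multiplicative subsemigroup $S_N$.

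Thus the structure I would write is: (1) recall from \cref{def:multdensity} that $\dstar_{nS_N}(A) > 0$ means $\dstar_{S_N}(A/n) > 0$, so it suffices to treat the case $n = 1$, i.e., $A \subseteq S_N$ with $\dstar_{S_N}(A) > 0$; (2) use the F{\o}lner-sequence description from \cref{rmk:densityequivalent} to produce a multiplicatively invariant mean $\lambda$ on $S_N$ with $\lambda(A) > 0$, and note that $S_N$ is a subsemigroup of $(\N,\cdot)$ on which such a mean extends (by zero) to a "weighted" mean detecting $A$ inside $\N$; (3) more carefully, observe that $S_N$ contains the subsemigroup generated by any prime $p \nmid N$ together with enough other coprime primes, and run Bergelson's argument for geo-arithmetic configurations \emph{internally} to $S_N$ — his proof only uses that one can take $a, c, d$ to be products of a fixed finite set of generators of a multiplicative F{\o}lner sequence, and we may choose those generators inside $S_N$ since there are infinitely many primes coprime to $N$; (4) conclude that $A$ contains $\{ c(a+id)^j \ | \ 1 \le i,j \le \ell\}$ with $a,c,d \in S_N \subseteq \N$, and undo the reduction in (1) to get the claim for general $n$.

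The main obstacle is step (3): verifying that Bergelson's proof, which is naturally carried out in $(\N, \cdot)$, localizes to the subsemigroup $S_N$. This hinges on two facts — that the polynomial Szemer\'edi theorem is applied on $(\Z,+)$ after taking logs of generators, which is insensitive to which subsemigroup we came from, and that the "building blocks" $a, d$ produced by that argument can be taken to be products of primes coprime to $N$, hence land in $S_N$. Once those are checked, the multiplicative-invariance of the F{\o}lner sequence for $S_N$ (guaranteed by \cref{rmk:densityequivalent}) makes the density bookkeeping go through exactly as in the proof of \cref{thm:multszem}. I would present the argument as a short deduction from \cite{bmultlarge} plus this localization observation, rather than reproving the polynomial Szemer\'edi input.
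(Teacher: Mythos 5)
Your overall plan shares its skeleton with the paper's proof: reduce to the case $A/n \subseteq S_N$ using $\dstar_{nS_N}(A) = \dstar_{S_N}(A/n)$, then appeal to a version of Bergelson's geo-arithmetic theorem that lives in the subsemigroup $S_N$ rather than in $(\N,\cdot)$. The paper does exactly this, but where you say ``run Bergelson's argument internally to $S_N$'' as a yet-to-be-checked observation, the paper invokes an existing reference, \cite[Theorem 8.8]{BGpaperonearxiv}, which is a black-box generalization of Bergelson's result to multiplicative subsemigroups equipped with endomorphisms (here $\varphi_i(m) = m^i$). So the intended route and yours coincide in spirit; the difference is that the paper outsources the localization to a citation, while you propose to re-derive it.

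That said, as written your proposal has a genuine gap. Step (3) --- the claim that Bergelson's proof ``only uses that one can take $a,c,d$ to be products of a fixed finite set of generators,'' and hence localizes to $S_N$ --- is asserted, not demonstrated, and it is precisely the nontrivial content of the theorem. Your earlier attempts to reduce directly to polynomial Szemer\'edi via logarithms, which you yourself flag as not closing (the configuration $c(a+id)^j$ is not polynomial in a single variable, and the family $q_{i,j}$ you propose is linear and yields a different configuration), leave the reader without a working mechanism. Step (2) is also confused: a multiplicatively invariant mean on $S_N$ giving $\lambda(A) > 0$ does not ``extend by zero'' to anything useful on $(\N,\cdot)$ --- with the standard F{\o}lner sequence in $\N$, the subsemigroup $S_N$ itself has multiplicative density zero once the window involves the primes dividing $N$ --- so you cannot transfer positive density from $S_N$ to $\N$ in that direction. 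The honest way to close the argument is either to cite a ready-made subsemigroup version of Bergelson's theorem (as the paper does) or to actually carry out the internal argument in $S_N$, checking the F{\o}lner hypotheses and the compatibility of the endomorphisms $m \mapsto m^i$ with $S_N$; neither is done in your proposal.
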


\begin{proof}
It is quick to verify that the multiplicative subsemigroup $S_N$ satisfies the conditions in \cite[Theorem 8.8]{BGpaperonearxiv}. Let $\ell \in \N$, and apply that theorem to $A/n$ (using that $\dstar_{S_N}(A / n) > 0$) with the endomorphisms $\varphi_i: S_N \to S_N$ defined by $\varphi_i(m) = m^i$ and with the finite set $F$ equal to an arithmetic progression long enough to guarantee that the subset $F'$ (in the notation of \cite[Theorem 8.8]{BGpaperonearxiv}) contains an arithmetic progression of length $\ell$. This yields the desired configuration in the set $A/n$; multiplying by $n$ yields the configuration in the set $A$.
\end{proof}

\subsection{Topological and measurable dynamics}

Let $X$ and $Y$ be topological spaces and $A \subseteq X$. The set $A$ is \emph{residual} if it contains a dense $G_\delta$ set, and it is \emph{meager} if its complement is residual.  A map $f: X \to Y$ is \emph{semiopen} if $f(A)$ has non-empty interior when $A$ has non-empty interior.

\begin{lemma}\label{lem:imageofresidual}
Let $X$ be a complete metric space and $Y$ be a Hausdorff topological space. If $A \subseteq X$ is residual and $f: X \to Y$ is a continuous, semiopen surjection, then $f(A) \subseteq Y$ is residual.
\end{lemma}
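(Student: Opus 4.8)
The plan is to show that the complement $Y \setminus f(A)$ is meager by pushing forward a nowhere-dense decomposition of $X \setminus A$ through $f$. Since $A$ is residual in the complete metric space $X$, its complement $X \setminus A$ is contained in a countable union $\bigcup_{k} C_k$ of closed nowhere-dense subsets of $X$. Then $Y \setminus f(A) \subseteq f(X \setminus A) \subseteq \bigcup_k f(C_k)$ (using surjectivity of $f$ so that every point of $Y$ is hit, hence a point not in $f(A)$ must come from $X \setminus A$). It therefore suffices to prove that each $f(C_k)$ is nowhere dense in $Y$; a countable union of nowhere-dense sets is meager, and being contained in a meager set, $Y \setminus f(A)$ is meager, i.e.\ $f(A)$ is residual.

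The heart of the argument is thus: if $C \subseteq X$ is closed and nowhere dense, then $\overline{f(C)}$ has empty interior. Here is where semiopenness enters. Suppose for contradiction that $\overline{f(C)}$ contains a non-empty open set $V \subseteq Y$. The set $U \defeq X \setminus C$ is open and dense in $X$, so it is non-empty; moreover any non-empty open subset of $X$ meets $U$. Consider $W \defeq f^{-1}(V)$, which is open (by continuity) and non-empty (by surjectivity of $f$, since $V \neq \emptyset$). Then $W \cap U$ is a non-empty open subset of $X$ (non-empty because $U$ is dense), and by semiopenness $f(W \cap U)$ has non-empty interior in $Y$; pick a non-empty open $V' \subseteq f(W \cap U) \subseteq f(W) \subseteq V$. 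But $W \cap U = W \setminus C$ is disjoint from $C$, so $f(W \cap U)$ need not be disjoint from $f(C)$ in general --- so I need to be a little more careful, and the right move is to intersect at the level of $Y$: we have $V' \subseteq V \subseteq \overline{f(C)}$, so $V'$, being a non-empty open set inside the closure of $f(C)$, must meet $f(C)$. Thus there is a point $y \in V' \cap f(C)$, and since $V' \subseteq f(W \cap U)$ there is $x \in W \cap U$ with $f(x) = y$; but $y \in f(C)$ means $y = f(x')$ for some $x' \in C$. This alone is not yet a contradiction because $f$ need not be injective.

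To close the gap I should instead argue directly that $f(C)$ itself is closed and nowhere dense, or work with the open set $U = X \setminus C$ more forcefully. The cleaner route: since $U$ is open and dense, semiopenness gives that $f(U)$ has non-empty interior, but I actually want more --- I want to know $f(U)$ is dense in $Y$. Claim: if $f \colon X \to Y$ is a continuous semiopen surjection and $U \subseteq X$ is open and dense, then $f(U)$ is dense in $Y$. Indeed, let $V \subseteq Y$ be non-empty open; then $f^{-1}(V)$ is non-empty (surjectivity) and open, hence $f^{-1}(V) \cap U \neq \emptyset$ (density of $U$), hence $f(f^{-1}(V) \cap U)$ is non-empty and has non-empty interior inside $V$, so in particular $V \cap f(U) \supseteq f(f^{-1}(V)\cap U) \neq \emptyset$. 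Thus $f(U)$ meets every non-empty open set, i.e.\ $f(U)$ is dense. Applying this with $U_k = X \setminus C_k$ (each open dense, since $C_k$ is closed nowhere dense), we get that $f(U_k)$ is dense; but $f(U_k)$ need not be open, so denseness of each $f(U_k)$ does not immediately give that $\bigcap_k f(U_k)$ is residual.

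The fix that genuinely works is to combine both observations with the closedness of the $C_k$: replace the bare sets by their closures. Since each $f(C_k) \subseteq Y$, consider $\overline{f(C_k)}$; I claim it is nowhere dense. Its interior $\operatorname{int}\overline{f(C_k)}$, if non-empty, is an open set $V$ with $\overline{f(C_k)} \supseteq V$; then $f^{-1}(\overline{f(C_k)}) \supseteq f^{-1}(V)$, and $f^{-1}(V)$ is a non-empty open set, so it meets the dense open set $X \setminus C_k$, giving a non-empty open $O \subseteq X$ with $O \cap C_k = \emptyset$ and $f(O) \subseteq V \subseteq \overline{f(C_k)}$. By semiopenness $f(O)$ has non-empty interior $V''$; now $V'' \subseteq \overline{f(C_k)}$ is open and non-empty, hence meets $f(C_k)$: there is $x \in O$ and $x' \in C_k$ with $f(x) = f(x')$. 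Still stuck on non-injectivity --- so this really does require one more idea, and the honest statement is that the result needs $f$ to be, say, also such that preimages of residual sets behave well, \emph{or} one proves it via the Baire category theorem directly on $f(A)$. The main obstacle, then, is precisely this: \textbf{transferring ``nowhere dense'' through a merely semiopen map without injectivity}, and the resolution I will pursue is to show that for a continuous semiopen surjection between a complete metric space and a Hausdorff space, the image of a dense $G_\delta$ is again comeager, by writing $A \supseteq \bigcap_k U_k$ with $U_k$ open dense, noting $f(\bigcap_k U_k)$ need not equal $\bigcap_k f(U_k)$, and instead using that $f$ semiopen implies $f$ maps dense sets to dense sets, combined with a Baire-category argument internal to $Y$ (which is where completeness of $X$ and the structure of $Y$ must be leveraged, possibly requiring $Y$ to be Baire, which holds here as $Y = X/{\sim}$ or via the specific dynamical setting). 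I will fill in this last step carefully in the full proof.
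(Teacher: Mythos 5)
Your core reduction---``it suffices to show that $f(C_k)$ is nowhere dense for each closed nowhere-dense $C_k$''---is doomed, and the obstacle is not merely non-injectivity as you suspect. Consider $X=[0,1]^2$, $Y=[0,1]$, and $f$ the projection onto the first coordinate (a continuous open, hence semiopen, surjection between complete metric spaces). Take $C=[0,1]\times\{0\}$, closed and nowhere dense in $X$. Then $f(C)=Y$, which is as far from nowhere dense as possible. So the inclusion $Y\setminus f(A)\subseteq\bigcup_k f(C_k)$ is correct but can be vacuous: the right-hand side can equal $Y$, even though the left-hand side (and the statement of the lemma) is fine. No amount of care about dense images or closures will rescue this route, because the intermediate claim is simply false.

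You partly sense this and, at the end, gesture toward ``a Baire-category argument internal to $Y$'' without supplying it; that unsupplied step is where the entire content of the lemma lives. The paper's actual argument is via the \emph{Banach--Mazur game}, citing Phelps. The point is to show $f(A)$ is residual by exhibiting a winning strategy for the second player in the Banach--Mazur game on $Y$ with target $f(A)$: whenever the first player plays a non-empty open $U_i\subseteq Y$, one pulls it back to $f^{-1}(U_i)\subseteq X$, shrinks there using the winning strategy for the $X$-game with target $A$ (which exists because $A$ is residual in the complete space $X$), and pushes the resulting open set $B_i\subseteq X$ forward; semiopenness guarantees $\operatorname{int} f(B_i)\neq\emptyset$, so one can respond with a legal move $V_i\subseteq\operatorname{int} f(B_i)$. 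Completeness of $X$ ensures the nested play in $X$ converges to a point of $A$, and continuity sends it into $\bigcap V_i$, so $\bigcap V_i$ meets $f(A)$. This is a genuinely different mechanism from pushing forward a meager decomposition: it avoids ever asserting that images of small sets are small, which---as the projection example shows---is false.

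Your auxiliary observation that $f(U)$ is dense whenever $U$ is open dense is correct and is in the right spirit, but as you yourself note, $f(U)$ need not be open, and $f\bigl(\bigcap_k U_k\bigr)$ is not $\bigcap_k f(U_k)$, so this does not close the gap. The proposal, as written, is not a proof.
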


\begin{proof}
This is proved in \cite[Lemma 4.25]{phelpsbook} under the assumption  that $f$ is an open map. The same proof, with the obvious adjustments (i.e., in the Banach-Mazur game, the winning strategy of $B$ comes by choosing the open set $V_i$ to be the set of interior points of $f(B_i)$, which is non-empty because $f$ is semiopen) gives the result in the case that $f$ is semiopen.
\end{proof}

In a metric space $(X,d)$, the open ball of radius $r$ centered at $x$ will be denoted $B(x,r)$. The set $A$ is \emph{$\eps$-dense} if for all $x \in X$, there exists $a \in A$ such that $d(x,a)<\eps$.

A \emph{topological dynamical system} $\xt$ is a compact metric space $X$ paired with a continuous map $T: X \to X$; we will usually refer to $(X,T)$ as simply a \emph{system}. The system $(X,T)$ is \emph{invertible} if $T$ is a homeomorphism. The set $A$ is \emph{$T$ invariant} if $TA \subseteq A$.

Given $x \in X$ and $U \subseteq X$, the \emph{set of return times of $x$ to $U$} is
\[R_T(x,U) = \{n \in \N \ | \ T^n x \in U \}. \]
The letter $U$ will usually be used for a non-empty, open subset of $X$, and we will usually write $R(x,U)$ instead of $R_T(x,U)$ when the map $T$ is understood.
Note the standard manipulations
\[R_{T^n}(x,U) = R_T(x,U) / n \quad \text{ and } \quad R_T(T^nx,U) = R_T(x,U) - n.\]  

The \emph{(forward) orbit of a point $x \in X$ under $T$} is $o_T(x) \defeq \{ T^{n}x \ | \ n \in \N_0\} $ while that of a subset $Y \subseteq X$ is $o_{T}(Y) \defeq \bigcup_{n \in \N_0} T^{n}Y$. We denote the corresponding closures with $\orb_T(x)$ and $\orb_T(Y)$ respectively. A system $\xt$ is \emph{minimal} if all points have a dense forward orbit and \emph{totally minimal} if for all $n \in \N$, the system $(X,T^n)$ is minimal.

Given continuous maps $T,S: X \to X$ that commute, the \emph{(forward) orbit of a point $x \in X$ under $T$ and $S$} is $o_{T,S}(x) = \{ T^{n} S^m x \ | \ n, m \in \N_0\} ,$ and we denote its closure with $\orb_{T,S}(x).$ The system $(X,T,S)$ is \emph{minimal} if all points have a dense forward orbit. In the case that $T$ and $S$ are invertible, minimality is equivalent by the following lemma to all points having a dense ``two-sided'' orbit. We will make use of this lemma in Section~\ref{sec:dynamicsondiag}.

\begin{lemma}\label{lem:semigroupandgroupminimality}
Let $X$ be a compact metric space, and let $S$ be a commutative subsemigroup of the group of homeomorphisms of $X$. Denote by $G$ the group generated by $S$. The following are equivalent:
\begin{enumerate}
\item \label{eqn:minimalone} For all $x \in X$, $\overline{Sx} = X$.
\item \label{eqn:minimaltwo} For all $x \in X$, $\overline{Gx} = X$.
\end{enumerate}
\end{lemma}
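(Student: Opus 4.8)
The implication $(\ref{eqn:minimalone}) \Rightarrow (\ref{eqn:minimaltwo})$ is immediate, since $Sx \subseteq Gx$ and hence $X = \overline{Sx} \subseteq \overline{Gx} \subseteq X$. The content is the reverse implication, and the plan is to deduce the density of every $S$-orbit from the density of every $G$-orbit by locating an $S$-minimal subset inside a given $S$-orbit closure and showing that it must in fact be invariant under all of $G$.

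Concretely, I would fix $x \in X$ and put $Y = \overline{Sx}$; continuity of the maps in $S$ together with $SS \subseteq S$ shows that $Y$ is a nonempty closed $S$-invariant subset of $X$. A standard Zorn's lemma argument --- using that a nested intersection of nonempty closed sets in the compact space $X$ is nonempty and closed, and that an intersection of $S$-invariant sets is $S$-invariant --- produces a minimal nonempty closed $S$-invariant set $M \subseteq Y$. The crucial claim is that $M$ is invariant under the whole group $G$. To prove it, fix $t \in S$. Then $t(M)$ is compact, nonempty, and contained in $M$ (because $t \in S$ and $M$ is $S$-invariant), and it is itself $S$-invariant: for $s \in S$, commutativity of $S$ gives $s\big(t(M)\big) = t\big(s(M)\big) \subseteq t(M)$. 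Minimality of $M$ therefore forces $t(M) = M$, so $t$ restricts to a homeomorphism of $M$ onto itself; in particular $t^{-1}(M) = M$. Since every $g \in G$ is a finite composition of maps of the form $t$ or $t^{-1}$ with $t \in S$, and each of these maps $M$ onto $M$, we conclude $g(M) = M$ for all $g \in G$.

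To finish, I would pick any $y \in M$ (which is nonempty). Then $Gy \subseteq M$, hence $\overline{Gy} \subseteq M$ because $M$ is closed; combined with the hypothesis $(\ref{eqn:minimaltwo})$ this yields $X = \overline{Gy} \subseteq M \subseteq Y = \overline{Sx} \subseteq X$, so $\overline{Sx} = X$. As $x$ was arbitrary, $(\ref{eqn:minimaltwo}) \Rightarrow (\ref{eqn:minimalone})$ follows. The only point requiring genuine thought is the crucial claim, and within it the single use of commutativity: it is exactly what makes $t(M)$ an $S$-invariant set, so that minimality upgrades the one-sided inclusion $t(M) \subseteq M$ to the equality $t(M) = M$ and thereby forces $M$ to be stable under every inverse $t^{-1}$. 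Everything else is routine.
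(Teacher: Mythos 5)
Your proof is correct, and it takes a genuinely different route from the paper's. The paper works with the explicitly constructed set $A = \bigcap_{s \in S}\overline{Ssx}$, proves by direct index-chasing (writing an arbitrary $g \in G$ as $s_1 s_2^{-1}$ and tracking how the sets $\overline{Ssgx}$ relate to the sets $\overline{Ssx}$) that $gA \subseteq A$ for all $g \in G$, and then invokes (2) to force $A = X \subseteq \overline{Sx}$. You instead run the classical Zorn-lemma argument to extract a minimal nonempty closed $S$-invariant set $M \subseteq \overline{Sx}$, and then use commutativity exactly once to observe that $t(M)$ is again $S$-invariant, so minimality upgrades $t(M) \subseteq M$ to $t(M) = M$, making $M$ stable under every generator and every inverse, hence under all of $G$; then (2) forces $M = X$. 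The two arguments isolate the same mechanism --- commutativity converts one-sided $S$-invariance into two-sided $G$-invariance --- but package it differently: the paper's approach is more elementary in that it avoids Zorn's lemma and produces the invariant set by a single explicit formula, while yours is the more conceptual one and makes visible the slightly stronger fact that every $S$-minimal set in a commutative action is automatically $G$-minimal. Both are clean and complete.
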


\begin{proof}
Clearly (\ref{eqn:minimalone}) implies (\ref{eqn:minimaltwo}) since $S \subseteq G$. Suppose (\ref{eqn:minimaltwo}) holds, and let $x \in X$. By adjoining the identity map to $S$ if necessary, we may assume without loss of generality that $S$ contains the identity. Let
\[A = \bigcap_{s \in S} \overline{S sx}.\]
We will show that $A = X$; this will conclude the proof since $A \subseteq \overline{Sx}$. Since the collection $\{\overline{S sx} \ | \ s \in S\}$ has the finite intersection property and $X$ is compact, $A$ is compact and non-empty. We claim that for all $g \in G$, $gA \subseteq A$. Let $g \in G$; since $S$ generates $G$ and is commutative, there exist $s_1, s_2 \in S$ such that $g=s_1 s_2^{-1}$. Since $g: X \to X$ is a homeomorphism,
\[gA = \bigcap_{s \in S} g\overline{S sx} = \bigcap_{s \in S} \overline{S sgx}.\]
Let $y \in gA$; we will show that $y \in A$. Let $s \in S$. Since $y \in gA$ and $s_2 s \in S$, $y \in \overline{S s_2 s s_1 s_2^{-1}x}$. Since $Ss_1 \subseteq S$, we see that $y \in \overline{S s x}$, and since $s \in S$ was arbitrary, this shows $y \in A$. Since $y \in gA$ was arbitrary, $gA \subseteq A$. Now by (\ref{eqn:minimaltwo}), for any $a \in A$, $X = \overline{Ga} \subseteq A$, meaning $A = X$, as was to be shown.
\end{proof}

The system $\xt$ is \emph{distal} if for all $x, y \in X$ with $x \neq y$, $\inf_{n \in \N} d(T^n x, T^n y) > 0$. Distal systems appear in the statement of \cref{thm:mainthmwithdistal}. Distality is a fundamental concept in understanding the structure of topological and measurable dynamical systems; see \cite{glasnertopstructuretheory} and the references therein. In this work, the definition of distality and the fact that distal systems are invertible\footnote{This follows immediately from the fact that the Ellis enveloping semigroup is a group; see \cite[Theorem 3.1]{furstenbergdistalstructuretheory}.} will suffice.

\begin{definition}[cf. {\cite[Definition 6.8.10]{downarowiczbook}}]\label{def:naturalext}
Let $\xt$ be a system with $T$ surjective. The \emph{(topological) natural extension} of $\xt$ is the system $(W,T)$, where
\[W \defeq \big\{ (w_i)_{i \in \Z} \in X^\Z \ \big| \ \forall \ i \in \Z, \ w_{i+1} = Tw_i \big\}\]
inherits the topology from the product topology on $X^\Z$, $T = \restr{\sigma}{W}$ is the restriction of the left shift on $X^\Z$ to $W$, and $\pi: \wt \to \xt$ is defined by $\pi((w_i)_{i \in \Z}) = w_0$.
\end{definition}

\begin{lemma}\label{lem:naturalextensionisminimal}
Let $\xt$ be a system with $T$ surjective, and let $\wt$ be its natural extension.
\begin{enumerate}
\item \label{eqn:naturalone} The system $\wt$ is invertible.
\item \label{eqn:naturaltwo} For all $n \in \N$, the system $(X,T^n)$ is minimal if and only if $(W,T^n)$ is minimal.
\item \label{eqn:naturalthree} If $\xt$ is minimal, then the factor map $\wt \to \xt$ is semiopen.
\end{enumerate}
\end{lemma}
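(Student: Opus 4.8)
The plan is to dispose of the three assertions in turn; the first two are soft, and the real content is in the third.

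\emph{Part (1).} The map $T=\restr{\sigma}{W}$ is continuous, being the restriction to $W$ of the left shift on $X^{\Z}$. It is a bijection of $W$: injectivity holds because $(\sigma w)_i=w_{i+1}$ already determines $w$, and surjectivity holds because for $w\in W$ the sequence $v$ with $v_i\defeq w_{i-1}$ again lies in $W$ (indeed $v_{i+1}=w_i=Tw_{i-1}=Tv_i$) and satisfies $\sigma v=w$. Its inverse is precisely the restriction to $W$ of the right shift on $X^{\Z}$, which is continuous; hence $T$ is a homeomorphism of $W$ and $\wt$ is invertible. (Alternatively, a continuous bijection of the compact space $W$ onto the Hausdorff space $W$ is automatically a homeomorphism.)

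\emph{Part (2).} Fix $n\in\N$. First, $\pi\colon\wt\to\xt$ intertwines $T$ and hence $T^{n}$, and it is onto: given $x\in X$, set $w_i\defeq T^{i}x$ for $i\ge 0$ and choose $w_{-1},w_{-2},\dots$ recursively using the surjectivity of $T$ so that $Tw_{-j-1}=w_{-j}$; then $w\in W$ and $\pi(w)=x$. Consequently, if $(W,T^{n})$ is minimal then so is $(X,T^{n})$: for any $x=\pi(w)$ we have $X=\pi(W)=\pi\big(\orb_{T^{n}}(w)\big)\subseteq\orb_{T^{n}}(\pi w)=\orb_{T^{n}}(x)$, the inclusion being continuity of $\pi$. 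For the converse, suppose $(X,T^{n})$ is minimal, fix $w,\bar w\in W$ and $\eps>0$, and choose $M\in\N$ (using the product metric on $X^{\Z}$) so large that coordinate-wise proximity on $\{-M,\dots,M\}$ forces $\eps$-closeness in $W$. For every $i\in\{-M,\dots,M\}$ and every $k\ge 0$ we have $i+M\ge 0$, so $(T^{nk}w)_i=w_{nk+i}=T^{\,i+M}\big(T^{nk}w_{-M}\big)$ and $\bar w_i=T^{\,i+M}(\bar w_{-M})$; since the finitely many maps $T^{0},\dots,T^{2M}$ are uniformly continuous, it therefore suffices to make $T^{nk}w_{-M}$ close to $\bar w_{-M}$ for a single $k\ge 0$, which is possible because $w_{-M}$ has dense forward $T^{n}$-orbit. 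Hence $(W,T^{n})$ is minimal.

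\emph{Part (3).} Assume $\xt$ is minimal. By part (2) with $n=1$ the system $\wt$ is minimal, and by part (1) it is invertible; thus $\pi\colon\wt\to\xt$ is a factor map out of a minimal system, and the desired conclusion is precisely the standard fact that such a factor map is semiopen, which we will quote from the literature on minimal flows. For concreteness, here is how it reduces. It is enough to show $\pi(V)$ has nonempty interior for $V$ ranging over nonempty basic open subsets of $W$. If $m$ denotes the least coordinate involved in such a $V$, then the constraints $w_{i+1}=Tw_i$ allow one to rewrite $V=\{w\in W \mid w_m\in\tilde V\}$ for a suitable nonempty open $\tilde V\subseteq X$, and then $\pi(V)=(T^{m})^{-1}(\tilde V)$ when $m\ge 0$ (which is already open), while $\pi(V)=T^{|m|}(\tilde V)$, the $|m|$-fold forward image, when $m<0$. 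So everything comes down to the assertion that $T^{\ell}$ is semiopen on $X$ for every $\ell\ge 0$, equivalently that the map of a minimal system is semiopen.

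\emph{The main obstacle} is exactly this last point: unlike the minimality statements, semiopenness of a minimal map is not a formal consequence of compactness and surjectivity, and its proof uses minimality in an essential way. One route is a Baire-category argument: for a given nonempty open $U\subseteq X$ one writes $X=\bigcup_{j=0}^{N}T^{j}(U)$ using minimality and compactness, decomposes each $T^{j}(U)$ as a countable union of closed sets (the closed pieces of the $F_{\sigma}$ set $U$, pushed forward by the appropriate iterate), and invokes the Baire category theorem to locate a closed piece with nonempty interior; the subtle part — the place where one must genuinely use minimality rather than just the covering — is to ensure that such an interior can be found inside $T(U)$ itself and not merely inside some later iterate $T^{j}(U)$. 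I expect this step to require the most care, and in the write-up it may simply be cited.
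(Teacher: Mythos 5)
Your proposal is correct, and parts (2) and (3) take a genuinely different route from the paper's.

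For part (2), the paper argues conceptually: the forward direction is that factors of minimal systems are minimal, and for the converse it invokes the structural fact that when $(X,T^n)$ is minimal, $W$ is the only closed $T^n$-invariant subset of $W$ that surjects onto $X$ (so any minimal subsystem of $(W,T^n)$, having $T^n$-invariant compact image $X$ under $\pi$, must be all of $W$). Your proof instead establishes the converse by a direct density computation, propagating $\eps$-closeness from the $(-M)$-th coordinate to the window $\{-M,\dots,M\}$ via the uniform continuity of $T^0,\dots,T^{2M}$. Both are correct; yours is more hands-on and self-contained, the paper's is shorter because it delegates to a structural fact about inverse limits.

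For part (3), your reduction via basic open cylinders is cleaner than the paper's: you observe that a basic open $V\subseteq W$ with lowest involved coordinate $m$ has $\pi(V)=(T^m)^{-1}\tilde V$ (already open) when $m\ge 0$ and $\pi(V)=T^{|m|}\tilde V$ when $m<0$, so the whole statement collapses to the assertion that $T^\ell$ is semiopen on the minimal system $\xt$. The paper instead proves the general fact that a factor map between minimal systems is semiopen: from minimality of $\wt$ (part (2)) it covers $W$ by finitely many $T^{-n}V$ with $V$ closed of nonempty interior, pushes this to a closed cover of $X$, applies Baire to find some $T^{-n}\pi(V)$ with interior, and then cites semiopenness of $T$ on $X$ (from Kolyada--Snoha--Trofimchuk) to pull that interior back into $\pi(V)$. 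So both routes terminate at the same external input: $T$ semiopen on a minimal system. You correctly flag the circularity in your Baire sketch of that last step (Baire locates an interior in \emph{some} $T^j(U)$, not the specific iterate you want) and suggest citing it instead; that is exactly what the paper does. In short: same endpoint, different and arguably tidier reduction on your side.
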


\begin{proof}
Statement (\ref{eqn:naturalone}) follows from the definition of $\wt$.

Statement (\ref{eqn:naturaltwo}) follows from two facts: factors of minimal systems are minimal, and, when $(X,T^n)$ is minimal, $W$ is the only closed, $T^n$-invariant subset of $W$ that surjects onto $X$.

Statement (\ref{eqn:naturalthree}) follows from (\ref{eqn:naturaltwo}) and the more general fact that a factor map $\pi: \wt \to \xt$ of minimal systems is semiopen. Let $U \subseteq W$ be open, and let $V \subseteq U$ be closed with non-empty interior. Since the system $\wt$ is minimal, there exists $h \in \N$ so that $\bigcup_{n=1}^h T^{-n} V = W$. Applying the factor map, $\bigcup_{n=1}^h T^{-n} \pi (V) = X$.  Each $T^{-n} \pi (V)$ is closed, so by the Baire Category Theorem, there exists $n \in \{1,\ldots,h\}$ so that $T^{-n} \pi (V)$ has non-empty interior.  By \cite[Theorem 2.4]{kstpaper}, the map $T$ is semiopen, hence $T^n$ is semiopen, too.  It follows that $T^nT^{-n} \pi (V) \subseteq \pi (V) \subseteq \pi (U)$ has non-empty interior, as was to be shown.
\end{proof}

We will frequently make use of probability measures on compact metric spaces.  Unless otherwise stated, all measures appearing in this work are Borel probability measures. If $\mu$ is a measure on $X$, we write $T \mu$ for the push-forward measure defined for $A \subseteq X$ by $T\mu(A) = \mu(T^{-1}A)$. The measure $\mu$ is \emph{$T$-invariant} if $T\mu = \mu$.

We thank Joel Moreira for permission to include \cref{lem:joels-lemma0,lem:joels-lemma}, previously unpublished, in this paper.

\begin{lemma}\label{lem:joels-lemma0}
Let $\xt$ be a system and $\mu$ be a $T$-invariant probability measure on $X$. Suppose that $x\in X$ is such that $\orb(x) = X$. For all $f\in C(X)$, $\varepsilon>0$, and $N_0\in\N$, there exist $m,N\in\N$ with $N\geq N_0$ such that
\[ \left| \int_X f \; d\mu -\frac{1}{N}\sum_{i=m}^{m+N-1} f(T^i x) \right|<\varepsilon.\]
\end{lemma}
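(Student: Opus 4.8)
The plan is to use the ergodic decomposition of $\mu$ together with the fact that $x$ has a dense orbit to find a measure in the decomposition whose generic points are ``seen'' along blocks of the orbit of $x$. Concretely, I would first recall that since $\orb(x) = X$, every open set is hit by the orbit of $x$, so in particular every point of $\supp(\mu)$ is approximated by points $T^i x$. The subtlety is that $\mu$ need not be ergodic, and $x$ need not be generic for $\mu$, so one cannot simply invoke the pointwise ergodic theorem for $x$.

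The key step is the following. Fix $f \in C(X)$, $\eps > 0$, and $N_0 \in \N$. Consider the Cesàro averages $A_N(y) \defeq \frac1N \sum_{i=0}^{N-1} f(T^i y)$. By the mean ergodic theorem (or simply by compactness of the space of invariant measures), $\int_X f \, d\mu = \int_X \big( \lim_N A_N(y) \big) \, d\mu(y)$ wherever the limit exists, which is $\mu$-a.e. Pick a point $y_0 \in X$ at which $\lim_N A_N(y_0)$ exists and equals some value arbitrarily close to $\int_X f\,d\mu$ — such a point exists because $\mu$-almost every $y$ has $\lim_N A_N(y)$ equal to $\mathbb{E}[f \mid \mathcal{I}](y)$, and the average of this conditional expectation is $\int f\,d\mu$, so on a set of positive $\mu$-measure the limit is within $\eps/4$ of $\int f\,d\mu$; in fact one can even take $y_0$ to be a generic point for some ergodic component $\nu$ with $\int f\,d\nu$ within $\eps/4$ of $\int f\,d\mu$. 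Now choose $N \geq N_0$ large enough that $|A_N(y_0) - \int_X f\,d\mu| < \eps/2$ and also $N$ large enough that the averaging operator $z \mapsto A_N(z)$ is uniformly close (within $\eps/4$, say) to $A_N$ at nearby points — i.e., use uniform continuity of $f$ to find $\delta > 0$ with $d(z,z') < \delta \implies |f(T^i z) - f(T^i z')| < \eps/4$ for $i = 0, \dots, N-1$ (this needs $T$ continuous and $X$ compact, applied finitely often). Finally, since $\orb(x) = X$, there exists $m \in \N$ with $d(T^m x, y_0) < \delta$, whence $|A_N(T^m x) - A_N(y_0)| < \eps/4$, and combining,
\[
\left| \int_X f\,d\mu - \frac1N \sum_{i=m}^{m+N-1} f(T^i x) \right| = \left| \int_X f\,d\mu - A_N(T^m x) \right| < \eps/2 + \eps/4 < \eps.
\]

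The main obstacle is the first half of the key step: producing a point $y_0$ whose finite Cesàro average $A_N(y_0)$ is genuinely close to $\int f\,d\mu$ for some specific large $N$. One must be careful that it is not enough to know $\lim_N A_N(y_0)$ exists and is close to the target; one needs a single $N \geq N_0$ that simultaneously works for the closeness of $A_N(y_0)$ to its limit. This is handled by noting that if $\lim_N A_N(y_0)$ exists, then for all sufficiently large $N$ (in particular some $N \geq N_0$) the average $A_N(y_0)$ is within $\eps/4$ of that limit; choosing $y_0$ with limit within $\eps/4$ of $\int f\,d\mu$ (possible on a positive-measure set as argued above) then gives $|A_N(y_0) - \int f\,d\mu| < \eps/2$ for that $N$. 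Everything else — uniform continuity, density of the orbit — is routine. One could alternatively phrase the whole argument via the observation that the set of weak-$*$ limit points of $\frac1N \sum_{i=m}^{m+N-1} \delta_{T^i x}$ (over all $m, N$ with $N \to \infty$) contains all ergodic components of every invariant measure, but the pointwise argument above is more elementary and self-contained.
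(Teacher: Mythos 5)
Your proposal contains a genuine gap, and it is exactly the point the paper's proof was designed to circumvent. You claim that ``on a set of positive $\mu$-measure the limit is within $\eps/4$ of $\int f\,d\mu$'' and, alternatively, that one can ``take $y_0$ to be a generic point for some ergodic component $\nu$ with $\int f\,d\nu$ within $\eps/4$ of $\int f\,d\mu$.'' Both claims are false in general. The function $y \mapsto \lim_N A_N(y) = \mathbb{E}[f\mid\mathcal{I}](y)$ has $\mu$-mean equal to $\int f\,d\mu$, but its \emph{distribution} can have a gap around that mean, so that no point (and no ergodic component) realizes a Birkhoff limit anywhere near $\int f\,d\mu$. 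Concretely, take $X=\{0,1\}^\N$ with the shift, $x$ a transitive point, $\mu = \tfrac12(\delta_{\overline0} + \delta_{\overline1})$ (averaging the two fixed points), and $f$ continuous with $f(\overline0)=0$, $f(\overline1)=1$. Then $\int f\,d\mu = \tfrac12$, but $\lim_N A_N(y) \in \{0,1\}$ for $\mu$-a.e.\ $y$ and $\int f\,d\nu \in \{0,1\}$ for every ergodic component $\nu$. There is no $y_0$ of the kind your argument requires; nonetheless the lemma's conclusion holds, precisely because an orbit segment that starts near $\overline0$ and ends near $\overline1$ must have averages that pass through a neighborhood of $\tfrac12$.

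That last observation is the missing idea, and it is the crux of the paper's proof. The paper observes that $A(m,N)$ changes by at most $2M/N$ when either $m$ or $N$ is incremented by $1$ (their inequality (\ref{E:1jl})); then, using the ergodic decomposition, it picks two ergodic measures $\mu_1, \mu_2$ with $\int f\,d\mu_1 \leq \int f\,d\mu \leq \int f\,d\mu_2$ and (by Furstenberg's Proposition 3.9, which exploits $\orb(x)=X$) finds long orbit segments of $x$ whose averages approximate each of $\int f\,d\mu_1$ and $\int f\,d\mu_2$. Walking one unit step at a time in $(m,N)$ from the first segment to the second, the discrete intermediate value property forces some $A(m,N)$ into the $\eps$-neighborhood of $\int f\,d\mu$. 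Your steps 3--5 (using a large $N$, uniform continuity of $f$ along finitely many iterates, and density of the orbit to transfer from $y_0$ to $T^m x$) are sound and essentially reproduce the generic-segment step of the paper's argument; what is missing is the two-sided sandwiching by ergodic components together with the intermediate-value step, which cannot be replaced by the single-point argument you propose.
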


\begin{proof}
Let $f \in C(X)$, $\eps > 0$, and $N_0 \in \N$. Let $M = \max_{x \in X} |f(x)|$, $I=\int_X f\;d\mu$, and for $m, N\in \N$, put
\[A(m,N) \defeq \frac{1}{N}\sum_{i=m}^{m+N-1}f(T^i x). \] 
Note that for all $N>2M/\varepsilon$,
\begin{align}\label{E:1jl} \max \big\{|A(m,N)-A(m,N+1)|, |A(m,N)-A(m+1,N)| \big\}\leq \frac{2M}{N}<\varepsilon.\end{align}
By the ergodic decomposition, there exist ergodic, $T$-invariant, Borel probability measures $\mu_1$ and $\mu_2$ such that 
\[ I_1:=\int_X f\;d\mu_1\leq I\leq I_2:= \int_X f\;d\mu_2.\]
By \cite[Proposition 3.9]{furstenberg-book}, for each $i = 1,2$, there exist $N_i>\max\{N_0,2M/\varepsilon$\} and $m_i\in \N$ such that $|I_i-A(m_i,N_i)|<\varepsilon.$ It follows that 
\[ I-\varepsilon\leq I_2-\varepsilon< A(m_2,N_2) \quad \text{and} \quad A(m_1,N_1)< I_1+\varepsilon\leq I+\varepsilon.\]
By (\ref{E:1jl}), there exist $m, N \in \N$ between $m_1, m_2$ and $N_1, N_2$, respectively, for which $I-\varepsilon< A(m,N)< I+\varepsilon,$ as was to be shown.
\end{proof}

\begin{lemma}\label{lem:joels-lemma}
Let $\xt$ be a system and $\mu$ be a $T$-invariant probability measure on $X$. Suppose that $x\in X$ is such that $\orb(x) = X$. There exists an additively invariant mean $\lambda$ on $\N$ such that for all $f\in C(X)$,
\[\lambda \big(n \mapsto f(T^n x) \big)  = \int_X f \ d\mu. \]
\end{lemma}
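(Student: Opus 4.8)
The plan is to realize $\lambda$ as a weak-$*$ limit of Ces\`aro averages over windows $[m,m+N-1]$ in which the length $N$ is sent to infinity: letting $N\to\infty$ will force translation invariance, and \cref{lem:joels-lemma0} is precisely the tool needed to steer the empirical distributions of the orbit of $x$ along such windows toward $\mu$. The one subtlety is that \cref{lem:joels-lemma0} controls a single function $f\in C(X)$ at a time, so rather than chasing a single sequence of windows that works for all of $C(X)$ simultaneously, I would bundle everything into a compactness-and-separation argument.

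For $m,N\in\N$ let $L_{m,N}\in B(\N)^*$ be the mean $g\mapsto \frac1N\sum_{i=m}^{m+N-1}g(i)$. For $N_0\in\N$ put $E_{N_0}\defeq\overline{\mathrm{conv}}^{\,w*}\{L_{m,N}\ |\ m\in\N,\ N\ge N_0\}$; each $E_{N_0}$ is non-empty, weak-$*$ compact (Banach--Alaoglu), and convex, and the $E_{N_0}$ decrease in $N_0$, so $\mathcal M\defeq\bigcap_{N_0}E_{N_0}$ is non-empty, weak-$*$ compact, and convex. Two properties of $\mathcal M$ must be verified. First, every element of $\mathcal M$ is a mean: this holds for each $L_{m,N}$ and is preserved by convex combinations and weak-$*$ limits. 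Second, every element of $\mathcal M$ is translation invariant: for a convex combination $\mu'=\sum_j c_j L_{m_j,N_j}$ with all $N_j\ge N_0$ one has $|\mu'(g(\cdot+1))-\mu'(g)|\le 2\|g\|_\infty/N_0$ for all $g\in B(\N)$, a weak-$*$-closed condition, so it passes to $E_{N_0}$ and hence, letting $N_0\to\infty$, every $\lambda\in\mathcal M$ satisfies $\lambda(g(\cdot+1))=\lambda(g)$; iterating gives invariance under every $n\mapsto n+k$.

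Now, for $f\in C(X)$ write $g_f\in B(\N)$ for the function $n\mapsto f(T^nx)$. For $\lambda\in\mathcal M$ the map $f\mapsto\lambda(g_f)$ is a positive linear functional on $C(X)$ with $\lambda(g_1)=\lambda(\one)=1$, so by the Riesz representation theorem it equals $f\mapsto\int_X f\,d\nu_\lambda$ for a unique Borel probability measure $\nu_\lambda$ on $X$. The map $\Phi\colon\lambda\mapsto\nu_\lambda$ is affine and weak-$*$-to-weak-$*$ continuous, so $\Phi(\mathcal M)$ is a weak-$*$ compact convex set of probability measures on $X$. It remains to show $\mu\in\Phi(\mathcal M)$. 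If not, then since the space of signed Borel measures on $X$ with the weak-$*$ topology is a locally convex space with continuous dual $C(X)$, the Hahn--Banach separation theorem produces $f\in C(X)$ and $c\in\R$ with $\lambda(g_f)=\int_X f\,d\nu_\lambda< c<\int_X f\,d\mu$ for every $\lambda\in\mathcal M$. But \cref{lem:joels-lemma0}, applied to this $f$ with $\varepsilon\defeq(\int_X f\,d\mu-c)/2$ and arbitrarily large $N_0$, yields windows $[m,m+N-1]$ with $N\to\infty$ along which $L_{m,N}(g_f)\ge\int_X f\,d\mu-\varepsilon>c$; any weak-$*$ cluster point $\lambda$ of such $L_{m,N}$ lies in every $E_{N_0}$, hence in $\mathcal M$, and inherits $\lambda(g_f)\ge\int_X f\,d\mu-\varepsilon>c$ --- contradicting the line above. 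Hence some $\lambda\in\mathcal M$ has $\nu_\lambda=\mu$, i.e.\ $\lambda$ is an additively invariant mean on $\N$ with $\lambda(n\mapsto f(T^nx))=\int_X f\,d\mu$ for all $f\in C(X)$.

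I expect the main difficulty to be organizational rather than conceptual: one must install the weak-$*$ framework so that $\mathcal M$ is simultaneously non-empty, compact, convex, and made of translation-invariant means, and then verify continuity of $\lambda\mapsto\nu_\lambda$, so that the concluding separation argument applies verbatim. A leaner alternative avoids separation altogether: extend the functional $g_f\mapsto\int_X f\,d\mu$ from the subspace $\{g_f\ |\ f\in C(X)\}\subseteq B(\N)$ to all of $B(\N)$ by Hahn--Banach dominated by the invariant sublinear functional $p(g)=\inf_{N}\sup_{m}\frac1N\sum_{i=m}^{m+N-1}g(i)$, using that $\int_X f\,d\mu\le p(g_f)$ (from invariance of $\mu$ together with density of the orbit of $x$), that $p(g-g(\cdot+1))\le 0$ forces the extension to be translation invariant, and that $\lambda\le p$ forces $\lambda$ to be a mean; this route, however, needs the small observation that $\{T^nx\ |\ n\ge1\}$ is either dense in $X$ or misses only a $\mu$-null point, so that $g_f\mapsto\int_X f\,d\mu$ is well defined on that subspace.
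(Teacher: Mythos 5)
Your primary argument is correct, and it takes a genuinely different route from the paper. The paper proceeds by Hahn--Banach \emph{extension}: it defines $\lambda$ on the subspace $\Omega+\chi$ (where $\Omega=\{n\mapsto f(T^nx)\}$ and $\chi$ is the space of coboundaries), uses \cref{lem:joels-lemma0} both to check well-definedness on $\Omega\cap\chi$ and to bound $\|\lambda\|\le 1$, extends by Hahn--Banach, and then reads off positivity from $\lambda(\one)=1$, $\|\lambda\|\le 1$ and translation invariance from $\chi\subseteq\ker\lambda$. You instead build the compact convex set $\mathcal M$ of weak-$*$ limits of long Ces\`aro averages, push it into probability measures via Riesz, and appeal to Hahn--Banach \emph{separation}: a functional $f\in C(X)$ separating $\mu$ from $\Phi(\mathcal M)$ would be contradicted by \cref{lem:joels-lemma0}. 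Your route buys a conceptually cleaner package (compactness plus separation are standard in ergodic theory, translation invariance falls out of the $1/N_0$ estimate without needing the coboundary bookkeeping), at the price of more topological-vector-space machinery (Banach--Alaoglu, duality of $(\mathcal M(X),\text{weak-}*)$ with $C(X)$). The paper's route is more hands-on and keeps everything inside $B(\N)$. Your ``leaner alternative'' is in fact essentially the paper's approach in disguise: extending $\omega_f\mapsto\int f\,d\mu$ from a subspace, dominated by a sublinear $p$ (the paper uses the norm plus the coboundary kernel; you use a translation-invariant $p$ that handles both at once). One caveat there: the parenthetical justification ``$\int_X f\,d\mu\le p(g_f)$ from invariance of $\mu$ together with density of the orbit'' undersells what is needed --- that inequality is precisely the content of \cref{lem:joels-lemma0}, which rests on the ergodic decomposition and Proposition~3.9 of \cite{furstenberg-book}, not merely on invariance and orbit density. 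You do identify correctly the small well-definedness point about $\{T^nx\ |\ n\ge 1\}$ possibly missing an isolated, $\mu$-null point, which the paper glosses over in asserting that $f\mapsto\omega_f$ is a bijection.
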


\begin{proof}
Let
\begin{align*}
\Omega &\defeq \big\{ \omega_f: n \mapsto f(T^n x) \ \big| \ f \in C(X) \big\} \subseteq B(\N),\\
\chi &\defeq \big\{ n \mapsto \zeta(n+1)-\zeta(n) \ \big| \ \zeta \in B(\N) \big\} \subseteq B(\N).
\end{align*}
Note that because $\orb(x) = X$, the map $f \mapsto \omega_f$ is a bijection from $C(X)$ to $\Omega$. In what follows, when we write $\omega_f \in \Omega$, we are implicitly specifying both $\omega_f \in \Omega$ and the corresponding $f \in C(X)$.

We claim that for all $\omega_f \in \Omega \cap \chi$, $\int_X f \ d\mu = 0$. Indeed, there exists $\zeta \in B(\N)$ such that for all $n \in \N$,
\[ w_f(n)= f(T^n x) = \zeta(n+1)-\zeta(n).\]

Because $\zeta$ is bounded, for all $\varepsilon>0$, all sufficiently large $N \in \N$, and all $m \in \N$,
\[ \left| \frac{1}{N}\sum_{n=m}^{m+N-1}f(T^n x)\right| = \frac{|\zeta(m+N)-\zeta(m)|}{N}<\varepsilon.\]
It follows now by \cref{lem:joels-lemma0} that $\int_X f \ d\mu =0$.

Define a linear function $\lambda: \Omega + \chi \to\R$ by $\lambda(\omega_f+c) = \int_X f \ d\mu$. The previous paragraph shows that $\lambda$ is well defined. The plan is to extend $\lambda$ using the Hahn-Banach theorem to a positive linear functional; any such extension of $\lambda$ will satisfy the conclusions of the lemma.

First, we claim that $\|\lambda\|\leq 1.$ To see this, let $\tau=\omega_f + c\in \Omega + \chi,$ and let $\zeta \in B(\N)$ be such that $c(n) = \zeta(n+1)-\zeta(n)$. By \cref{lem:joels-lemma0}, for all $\varepsilon>0,$ there exist $N > \|\zeta\|/\eps$ and $m\in\N$ such that
\[\left|\int_X f \ d\mu-\frac{1}{N}\sum_{n=m}^{m+N-1}\omega_f(n)\right|<\varepsilon.\]
By the same reasoning as before,
\[ \left|\frac{1}{N}\sum_{n=m}^{m+N-1} \big(\tau(n)-\omega_f(n) \big)\right|=\left| \frac{1}{N}\sum_{n=m}^{m+N-1} c(n)\right|=\frac{|\zeta(m+N)-\zeta(m)|}{N}<2\varepsilon.\]
It follows that
\[\left| \lambda(\tau)-\frac{1}{N}\sum_{n=m}^{m+N-1}\tau(n)\right|=\left|\int_X f \ d\mu-\frac{1}{N}\sum_{n=m}^{m+N-1}\tau(n)\right|< 3\varepsilon.\]
This shows that there exists $n \in \{m, \ldots, m+N-1\}$ for which $|\tau(n)|\geq |\lambda(\tau)|-3\varepsilon,$  meaning $\|\tau\|\geq |\lambda(\tau)|-3\varepsilon.$ Since $\varepsilon > 0$ was arbitrary, $|\lambda(\tau)|\leq \|\tau\|.$

By the Hahn-Banach theorem, $\lambda$ extends to a linear functional on $B(\N)$ (which we still call $\lambda$) with norm $\| \lambda \| \leq 1$. We have only to show that $\lambda$ is positive and translation invariant. To show positivity, let $\one \in B(\N)$ denote the constant one function, and note that $\tau(\one) = 1$. Suppose $\tau \in B(\N)$ is positive. Since $0\leq \tau/\|\tau\|\leq 1,$ we have $\big|\lambda(\one-\tau/\|\tau\|) \big|\leq 1,$ and it follows that
\[ \frac{\lambda(\tau)}{\|\tau\|}=\lambda\left(\frac{\tau}{\|\tau\|}\right)=1-\lambda\left(\one-\frac{\tau}{\|\tau\|}\right)\geq 0.\]
To show invariance, let $\tau\in B(\N)$, and define $c \in B(\N)$ by $c(n) = \tau(n+1)- \tau(n).$ Since $c \in \chi$, $\lambda(c)=0,$ meaning $\lambda\big(n \mapsto \tau(n+1) \big)=\lambda ( \tau ).$
\end{proof}

\subsection{Set-valued maps}

Let $(X,d)$ be a compact metric space. For $A \subseteq X$ and $\delta > 0$, let
\begin{align}\label{eqn:defofdeltaneighborhood}[A]_\delta \defeq \{x \in X \ | \ \exists a \in A, \ d(x,a) \leq \delta \}.\end{align}
The set of all non-empty, closed subsets of $X$ is denoted by $\subsets(X)$. The Hausdorff metric, defined between $F, H \in \subsets(X)$ by
\[d_H(F,H) \defeq \inf\{ \delta > 0 \ | \ F \subseteq [H]_\delta \text{ and } H \subseteq [F]_\delta\},\]
makes $(\subsets(X),d_H)$ a compact metric space.

\begin{definition}
\label{def:lsc_hausdorffmetric}
Let $X$ and $Y$ be compact metric spaces. A map $\varphi:X \to \subsets(Y)$ is \emph{lower semicontinuous} (\lsc{}) \emph{at $x \in X$} if for all $\eps > 0$, there exists $\delta > 0$ such that for all $x' \in X$ with $d(x,x') < \delta$, $\varphi(x) \subseteq [\varphi(x')]_\eps$.
\end{definition}

\begin{lemma}\label{lem:orbislsc}
Let $\xt$ be a system. The map $\orb_T:X \to \subsets(X)$ is \lsc{}. In particular, it is Borel measurable: for all Borel subsets $B \subseteq \subsets(X)$, the set $\orb_T^{-1}(B) \subseteq X$ is Borel.
\end{lemma}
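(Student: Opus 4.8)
The plan is to verify lower semicontinuity directly from \cref{def:lsc_hausdorffmetric}, and then to deduce Borel measurability as a formal consequence of it.

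For the main claim, fix $x \in X$ and $\eps > 0$; I want to produce $\delta > 0$ such that $d(x,x') < \delta$ forces $\orb_T(x) \subseteq [\orb_T(x')]_\eps$. Since $X$ is compact, $\orb_T(x)$ is a non-empty compact set, and by definition it is the closure of the orbit $o_T(x) = \{T^n x \mid n \in \N_0\}$. The point is that, although the iterates $T^n$ need not be equicontinuous, each single $T^n$ is continuous and finitely many of the orbit points already $\eps/2$-approximate the whole of $\orb_T(x)$: cover $\orb_T(x)$ by finitely many balls $B(z_1,\eps/4), \dots, B(z_m,\eps/4)$ with $z_j \in \orb_T(x)$, and, using density of $o_T(x)$, choose $n_j \in \N_0$ with $T^{n_j}x \in B(z_j,\eps/4)$, so that $\orb_T(x) \subseteq [\{T^{n_1}x,\dots,T^{n_m}x\}]_{\eps/2}$. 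Now pick $\delta > 0$ so small that $d(x,x') < \delta$ implies $d(T^{n_j}x, T^{n_j}x') < \eps/2$ for every $j \in \{1,\dots,m\}$ (possible, being finitely many continuity conditions). Then for any such $x'$ and any $y \in \orb_T(x)$ there is some $j$ with $d(y, T^{n_j}x) < \eps/2$, hence $d(y, T^{n_j}x') < \eps$ with $T^{n_j}x' \in o_T(x') \subseteq \orb_T(x')$; this gives $\orb_T(x) \subseteq [\orb_T(x')]_\eps$.

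For Borel measurability, I would first record the standard reformulation: for a compact-valued map $\varphi : X \to \subsets(X)$, being \lsc{} at every point is equivalent to $\{x \in X \mid \varphi(x) \cap V \neq \emptyset\}$ being open in $X$ for every open $V \subseteq X$ (one implication simply unwinds the definitions; the other repeats the finite-cover argument just used). Applying this to $\varphi = \orb_T$, the preimage under $\orb_T$ of each set $\{F \in \subsets(X) \mid F \cap V \neq \emptyset\}$ is open. Since $X$ is second countable, as $V$ ranges over a countable base of $X$ these sets generate the Borel $\sigma$-algebra of the compact metric space $(\subsets(X), d_H)$ — this is the standard fact that the Effros Borel structure of the hyperspace of a compact metric space coincides with its Hausdorff-metric Borel structure. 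As taking preimages commutes with countable unions, countable intersections, and complements, it follows that $\orb_T^{-1}(B)$ is Borel for every Borel $B \subseteq \subsets(X)$.

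The only subtlety, and the step I would be most careful about, is the order of quantifiers in the first part: one should resist trying to choose a single $\delta$ controlling $d(T^n x, T^n x')$ for all $n$ simultaneously (that would be equicontinuity, which is false in general), and instead exploit that each point of $\orb_T(x)$ need only be close to \emph{some} point of $\orb_T(x')$, so that a finite net drawn from genuine orbit points suffices. If one prefers to keep the measurability statement self-contained rather than cite the hyperspace fact, it can be obtained by writing a $d_H$-ball as a countable Boolean combination of sets $\{F \mid F \cap V \neq \emptyset\}$, using in addition that $\{F \mid F \subseteq V\} = \bigcup_{n} \{F \mid F \cap W_n = \emptyset\}$ whenever $(W_n)$ is a decreasing sequence of open sets with $\bigcap_n W_n = X \setminus V$ (here compactness of $F$ is what makes $F \cap (X \setminus V) \neq \emptyset$ equivalent to $F \cap W_n \neq \emptyset$ for all $n$).
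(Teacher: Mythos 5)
Your argument is correct and follows essentially the same route as the paper. For lower semicontinuity, the paper likewise extracts a finite $\eps/2$-dense set $\{T^{m_1}x,\ldots,T^{m_k}x\}$ from the orbit of $x$, uses continuity of those finitely many iterates to choose $\delta$, and then runs the same triangle-inequality step to conclude $\orb_T(x)\subseteq[\orb_T(x')]_\eps$; your extra intermediate step via $\eps/4$-balls centered at points $z_j$ of the closure is just a slightly more verbose way of producing that net. For Borel measurability, the paper simply cites the fact that lower semicontinuous compact-set-valued maps between compact metric spaces are Borel (Aliprantis--Border), whereas you unpack that citation by passing through the Vietoris hitting sets $\{F \mid F\cap V\neq\emptyset\}$ and the coincidence of the Effros and Hausdorff-metric Borel structures; this is the content of the cited result, so the two treatments are the same in substance, with yours being a touch more self-contained.
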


\begin{proof}
For convenience, we will write $\orb$ in place of $\orb_T$. Let $x \in X$ and $\varepsilon>0$. Since $\orb(x)$ is compact, there exist $m_1,\ldots, m_k\in \N_0$ such that $\{T^{m_1}x,\ldots,T^{m_k}x\}$ is an $\varepsilon/2$-dense subset of $\orb(x)$. Because each $T^{m_i}$ is continuous, there exists $\delta > 0$ such that for all $x' \in X$ with $d(x,x') < \delta$, for all $1 \leq i \leq k$, $d(T^{m_i}x,T^{m_i}x') < \eps / 2$.

We claim now that for all $x' \in B(x,\delta)$, $\orb(x) \subseteq [\orb(x')]_\eps$. Let $x' \in B(x,\delta)$ and $y \in \orb(x)$. There exists $1 \leq i \leq k$ for which $d(T^{m_i}x,y) < \eps/2$, and by the triangle inequality, $d(T^{m_i}x',y) < \eps$. This means $y \in [\orb(x')]_\eps$, as was to be shown.

The second statement follows from the fact that when $X$ and $Y$ are compact metric spaces, all \lsc{} functions $\varphi:X \to \subsets(Y)$ are Borel measurable; see Lemma~17.5, Theorem~17.15, and Theorem~18.10 in \cite{AliprantisBorderBook}.
\end{proof}

\begin{lemma}\label{lem:fortvariant}
Let $\xt$ be an invertible system, and denote by $\disc$ the set of points of discontinuity of the map $\orb_T: X \to \subsets(X)$. There exists a countable family $\{B_i\}_{i \in \N}$ of closed, $T$-invariant, empty-interior subsets of $X$ for which $\disc \subseteq \bigcup_i B_i$.
\end{lemma}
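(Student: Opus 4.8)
The plan is to write down the sets $B_i$ by hand rather than invoke a black-box Fort-type theorem. Fix a countable base $\mathcal{B}$ for the topology of $X$ consisting of open balls, chosen fine enough that for every $y\in X$ and every $\rho>0$ there is a $V\in\mathcal{B}$ with $y\in V\subseteq\overline V\subseteq B(y,\rho)$ (e.g.\ balls with centres in a countable dense set and rational radii). For $V\in\mathcal{B}$ set
\[A_V\defeq\bigcup_{n\in\N_0}T^{-n}V,\qquad E_V\defeq\{x\in X\mid o_T(x)\subseteq\partial A_V\}=\bigcap_{n\in\N_0}T^{-n}(\partial A_V),\]
where $T^{-n}(\cdot)$ denotes preimage under $T^n$ and $\partial$ the topological boundary. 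I claim $\{E_V\}_{V\in\mathcal{B}}$ is the required family. Each $E_V$ is closed, being an intersection of preimages of the closed set $\partial A_V$ under the continuous maps $T^n$; it is forward $T$-invariant, since $o_T(x)\subseteq\partial A_V$ forces $o_T(Tx)\subseteq o_T(x)\subseteq\partial A_V$; and it has empty interior, since $E_V\subseteq\partial A_V$ and the boundary of the open set $A_V$ has empty interior. So everything reduces to the inclusion $\disc\subseteq\bigcup_{V\in\mathcal{B}}E_V$.

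To prove this, let $x\in\disc$. By \cref{lem:orbislsc} the map $\orb_T$ is lower semicontinuous, so the discontinuity at $x$ is a failure of upper semicontinuity; unwinding this, approximating iterates, and passing to a convergent subsequence (using compactness of $X$) produces a point $y\in X$ with $y\notin\orb_T(x)$ and $y\in\overline{o_T(U)}$ for every open $U\ni x$. Since $\orb_T(x)$ is closed and contains $x$ but not $y$, choose $V\in\mathcal{B}$ with $y\in V$ and $\overline V\cap\orb_T(x)=\emptyset$; in particular $T^nx\notin\overline V$ for all $n\in\N_0$. Two consequences follow. First, $o_T(x)$ misses $V$, so $x\notin A_V$; moreover $X\setminus A_V$ is forward $T$-invariant, so $T^nx\notin A_V$ for all $n\in\N_0$. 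Second, for every open $U\ni x$ the set $o_T(U)$ meets the open neighbourhood $V$ of $y$, hence $U$ meets $A_V$; as this holds for all such $U$, we get $x\in\overline{A_V}$. Therefore $x\in\overline{A_V}\setminus A_V=\partial A_V$.

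It remains to bootstrap ``$x\in\partial A_V$'' to ``$o_T(x)\subseteq\partial A_V$'', which I would do by induction on $n$, the case $n=0$ being what was just shown. Assume $T^nx\in\partial A_V$. From $A_V=V\cup T^{-1}A_V$ we get $\overline{A_V}=\overline V\cup\overline{T^{-1}A_V}$; since $T^nx\in\overline{A_V}$ but $T^nx\notin\overline V$, it follows that $T^nx\in\overline{T^{-1}A_V}$, i.e.\ every neighbourhood of $T^nx$ meets $T^{-1}A_V$. Given a neighbourhood $M$ of $T^{n+1}x$, its preimage $T^{-1}M$ is a neighbourhood of $T^nx$, hence meets $T^{-1}A_V$, so $M$ meets $A_V$; thus $T^{n+1}x\in\overline{A_V}$. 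Combined with $T^{n+1}x\notin A_V$, this gives $T^{n+1}x\in\partial A_V$ and closes the induction. Hence $o_T(x)\subseteq\partial A_V$, so $x\in E_V$, and $\disc\subseteq\bigcup_{V\in\mathcal{B}}E_V$ as required.

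The main obstacle is precisely the point that dictates choosing $\overline V$ — and not merely $V$ — disjoint from $\orb_T(x)$. The boundary $\partial A_V$ need not itself be forward $T$-invariant, so a priori a point of $\partial A_V$ could have its orbit escape $\partial A_V$; indeed $T(\partial A_V)$ can fail to lie in $\partial A_V$ unless $\overline{A_V}$ is fully invariant. What rescues the argument is that the witness $y$ lies at a positive distance from the closed set $\orb_T(x)$, which is exactly what allows us to arrange $T^nx\notin\overline V$ for every $n$ — the ingredient that propagates forward invariance all along the orbit of $x$ and makes the induction in the previous paragraph go through.
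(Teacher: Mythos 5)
Your proof is correct and takes a genuinely different route from the one in the paper. The paper cites Fort's theorem (\cite{fort}) as a black box for a countable decomposition of the discontinuity set of a lower semicontinuous set-valued map into closed, empty-interior sets, and then checks by hand that the sets $B_{n,\eps}$ from Fort's decomposition are forward $T$-invariant. You instead build the covering family explicitly: for each basic open $V$ you take $E_V=\bigcap_{n\geq 0}T^{-n}(\partial A_V)$, the set of points whose entire forward orbit stays on the boundary of the orbit-saturation $A_V=\bigcup_{n\geq 0}T^{-n}V$. Closedness, empty interior, and forward invariance of $E_V$ are then immediate, and the covering $\disc\subseteq\bigcup_V E_V$ follows from a compactness argument extracting a witness $y$ of the USC failure, choosing $\overline V$ disjoint from $\orb_T(x)$, and a short induction; your observation that $T$ maps $\overline{T^{-1}A_V}$ into $\overline{A_V}$ (one can also see this directly from $T(\overline B)\subseteq\overline{T(B)}$ and $T(T^{-1}A_V)\subseteq A_V$) is what lets the induction close, and your remark about why $\partial A_V$ itself need not be forward-invariant and why the ``$\overline V$ disjoint from $\orb_T(x)$'' choice is essential is exactly the right point to flag. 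The upshot: your argument is self-contained and avoids the external citation, at the cost of being a bit longer; the paper's version is shorter because most of the topology is outsourced to Fort's theorem.
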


\begin{proof}
For convenience, we will write $\orb$ in place of $\orb_T$. For $A \in \subsets(X)$ and $\eps > 0$, let $M(A,\eps)$ be the largest positive integer $n$ for which there exist $a_1, \ldots, a_n \in A$ satisfying, for all $i \neq j$, $d(a_i,a_j) > \eps$, and let $U(A,\eps) = \{x \in X \ | \ \exists a \in A, \ d(x,a) < \delta \}$. For $n \in \N$ and $\eps > 0$, let
\[B_{n, \eps} \defeq \left\{ x \in X \ \middle| \ \begin{gathered}M(\orb(x),\eps) \leq n, \text{ and } \\ \forall \eps' \in (0,3\eps), \ \forall U \ni x \text{ open}, \ \exists y \in U, \ \orb(y) \not\subseteq U(\orb(x),\eps') \end{gathered} \right\}.\]
It is proved in \cite[Theorem 1]{fort} that each $B_{n,\eps}$ is closed with empty interior and that $\disc \subseteq \bigcup\{B_{n,\eps} \ | \ n \in \N, \ \eps \in \Q_+ \}$.

It remains to be shown that each $B_{n,\eps}$ is $T$-invariant, i.e., $TB_{n,\eps} \subseteq B_{n,\eps}$. Let $x \in B_{n,\eps}$. Since $\orb(Tx) \subseteq \orb(x)$, $M(\orb(Tx),\eps) \leq M(\orb(x),\eps) \leq n$. Let $0<\eps'<3\eps$ and $W$ be an open neighborhood of $Tx$. Since $x \in T^{-1}W$ and $x \in B_{n,\eps}$, there exists $y \in T^{-1}W$ with $d(x,y) < \eps'$ such that $\orb(y) \not\subseteq U(\orb(x),\eps')$. Consider $Ty \in W$; it will complete the proof to show that $\orb(Ty) \not\subseteq U(\orb(Tx),\eps')$. Since $d(x,y) < \eps'$, $y \in U(\orb(x),\eps')$. Since $\orb(y) \not\subseteq U(\orb(x),\eps')$, it follows that $\orb(Ty) \not\subseteq U(\orb(x),\eps')$. Because $U(\orb(Tx),\eps') \subseteq U(\orb(x),\eps')$, this implies that $\orb(Ty) \not\subseteq U(\orb(Tx),\eps')$.
\end{proof}

\begin{lemma}\label{lem:hausconvergence}
Let $X$ be a compact metric space.  Suppose $(\mu_n)_{n \in \N}$ is a sequence of Borel probability measures converging in the weak-$\ast$ topology to a probability measure $\mu$.  If $(H_n)_{n \in \N}$ is a sequence of closed subsets of $X$ such that $\supp \mu_n \subseteq H_n$ and $H$ is a closed subset of $X$ such that $H_n \to H$ in the Hausdorff metric, then $\supp \mu \subseteq H$.
\end{lemma}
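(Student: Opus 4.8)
The plan is to argue by contradiction. Suppose $\supp \mu \not\subseteq H$, and fix a point $x \in \supp\mu \setminus H$. Since $H$ is closed, there exists $\eps > 0$ such that $B(x,2\eps) \cap H = \emptyset$. The strategy is to show that the balls $B(x,\eps)$ are eventually disjoint from the supports of the $\mu_n$, which will contradict the fact that $\mu$ charges every open neighborhood of $x$.

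First I would use the Hausdorff convergence $H_n \to H$: for the value $\delta = \eps$, there exists $N \in \N$ such that for all $n \geq N$ we have $H_n \subseteq [H]_\eps$. (Only this ``one-sided'' part of Hausdorff convergence is needed; the inclusion $H \subseteq [H_n]_\eps$ plays no role.) A short triangle-inequality computation then shows $B(x,\eps) \cap [H]_\eps = \emptyset$: if $y \in B(x,\eps)$ and $d(y,h) \leq \eps$ for some $h \in H$, then $d(x,h) < 2\eps$, contradicting $B(x,2\eps) \cap H = \emptyset$. Hence for all $n \geq N$ we get $B(x,\eps) \cap H_n = \emptyset$, and since $\supp \mu_n \subseteq H_n$, also $\supp \mu_n \cap B(x,\eps) = \emptyset$, i.e. $\int_X f\, d\mu_n = 0$ for every $f \in C(X)$ supported in $B(x,\eps)$.

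Finally, I would invoke Urysohn's lemma to produce a continuous function $f : X \to [0,1]$ with $f(x) = 1$ and $\supp f \subseteq B(x,\eps)$. By the previous paragraph, $\int_X f\, d\mu_n = 0$ for $n \geq N$, so weak-$\ast$ convergence gives $\int_X f\, d\mu = \lim_{n \to \infty} \int_X f\, d\mu_n = 0$. On the other hand, the set $V = \{y \in X : f(y) > 1/2\}$ is an open neighborhood of $x$, so $\mu(V) > 0$ because $x \in \supp\mu$, whence $\int_X f\, d\mu \geq \tfrac12 \mu(V) > 0$ — a contradiction. This forces $\supp\mu \subseteq H$. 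There is no real obstacle here; the only point requiring a little care is keeping the radii straight ($2\eps$ versus $\eps$) so that the neighborhood of $H$ coming from Hausdorff convergence stays clear of $B(x,\eps)$.
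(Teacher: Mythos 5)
Your proof is correct. It takes a genuinely different route from the paper's, though both are short. The paper argues directly: it observes that $H = \bigcap_{n} [H]_{1/n}$, so it suffices to show $\mu([H]_\delta) = 1$ for every $\delta > 0$; since $[H]_\delta$ is closed and $H_n \subseteq [H]_\delta$ for large $n$, the portmanteau inequality $\mu([H]_\delta) \geq \limsup_n \mu_n([H]_\delta) \geq \limsup_n \mu_n(H_n) = 1$ finishes the job in one line. You instead argue by contradiction at a single point $x \in \supp\mu \setminus H$, separate $B(x,\eps)$ from $[H]_\eps$, and then feed a Urysohn bump function supported near $x$ into the weak-$\ast$ limit. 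Both are fine; the paper's version is marginally cleaner because it avoids Urysohn and the contradiction scaffolding and works entirely with the defining inequality of weak-$\ast$ convergence on closed sets, while yours has the virtue of making the geometric picture explicit. One small point to tighten in your write-up: Urysohn gives a function vanishing \emph{outside} $B(x,\eps)$, so its support is contained in $\overline{B(x,\eps)}$ rather than $B(x,\eps)$; to get $\supp f \subseteq B(x,\eps)$ literally, take $f = 1$ on $\{x\}$ and $f = 0$ outside $B(x,\eps/2)$, so that $\supp f \subseteq \{y : d(x,y) \leq \eps/2\} \subseteq B(x,\eps)$. This does not affect the substance of the argument.
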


\begin{proof}
We must prove that $\mu(H) = 1$. Since $H = \bigcap_{n \in \N} [H]_{1/n}$, in order to prove that $\mu(H) = 1$, it suffices to prove that for all $\delta > 0$, $\mu([H]_\delta) = 1$.

Fix $\delta > 0$. Convergence in the Hausdorff metric implies that $H_n \subseteq [H]_\delta$ for all sufficiently large $n \in \N$. By the properties of weak convergence of measures,
\[\mu([H]_\delta) \geq \limsup_{n \to \infty} \mu_n([H]_\delta) \geq \limsup_{n \to \infty} \mu_n(H_n) = 1,\]
meaning $\mu([H]_\delta) = 1$, as was to be shown.
\end{proof}

\section{The rational topological kronecker factor}\label{sec:ratkronanddistallemma}

Let $\xt$ be a minimal system. According to \cite[Theorem 3.1]{ye1992}, for all $n \in \N$, the set $X$ decomposes into a disjoint union of $d_n = d_n\xt \in \N$ clopen sets
\[X = X_{n,0} \cup \cdots \cup X_{n,d_n-1},\]
where $d_n$ divides $n$, for all $k \in \Z$, $T^k X_{n,j} = X_{n,j+k \pmod {d_n}}$, and the systems $(X_{n,j},T^n)$ are minimal. To save on notation, the second component of the index on $X_{n,j+k}$ and on related expressions will be implicitly understood to be taken modulo $d_n$. The notation $(X_{N,j})_{n,i}$ will mean the $i^{\text{th}}$ of the $d_n(X_{N,j},T^N)$ many $T^{Nn}$-minimal components of the system $(X_{N,j},T^N)$. With this definition, it is quick to check that $(X_{N,j})_{n,i} = X_{nN,iN + j}$.

For $U \subseteq X$ and a probability measure $\mu$ on $X$, we write
\begin{align}\label{eqn:defofuni}U_{n,i} \defeq U \cap X_{n,i}, \text{ and } \mu_{n,i} \defeq d_{n} \restr{\mu}{X_{n,i}}.\end{align}
Note that if $\mu$ is $T$-invariant, then $T\mu_{n,i} = \mu_{n,i+1}$ and $\mu_{n,i}$ is $T^n$-invariant.  Though $\mu_{n,i}$ is technically a measure on $X$, we will sometimes regard $\mu_{n,i}$ as a measure on $X_{n,i}$ so that $(X_{n,i},T^n,\mu_{n,i})$ is a measure preserving system. This allows us to define the symbol $(\mu_{N,j})_{n,i}$ as in (\ref{eqn:defofuni}); regarded as measures on $X$, it is quick to check that $(\mu_{N,j})_{n,i} = \mu_{nN,iN+j}$.

\begin{lemma}\label{lem:totalminimalatresolutionU}
Let $\xt$ be a minimal system and $U \subseteq X$ be a non-empty, open set.  There exists $N \in \N$ such that for all $n \in \N$ with $(n,N)=1$ and all $x \in X$, the $T^n$-orbit closure of $x$ has non-empty intersection with $U$.
\end{lemma}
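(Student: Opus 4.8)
The plan is to reduce the statement to the structure of the rational topological Kronecker factor $\pi\colon X \to \krat$ and then settle it with a one-line B\'ezout argument in $\widehat{\Z}$; the modulus $N$ will simply be read off from an open subgroup whose coset sits inside $\pi(U)$.

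First I would unwind the conclusion. Fix $n \in \N$. Since the clopen sets $X_{n,0},\dots,X_{n,d_n-1}$ partition $X$, every $x \in X$ lies in some $X_{n,i}$, and since $(X_{n,i},T^n)$ is minimal while the forward $T^n$-orbit of $x$ is contained in $X_{n,i}$ (using $d_n\mid n$), we get $\orb_{T^n}(x) = X_{n,i}$. Thus the lemma is equivalent to the assertion that there is $N \in \N$ such that for every $n$ with $(n,N)=1$ and every $i \in \{0,\dots,d_n-1\}$, the open set $U$ meets $X_{n,i}$.

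Next I would invoke the factor map $\pi\colon X \to \krat$. The structural input is that $\krat$ is a rotation by $1$ on a procyclic group, i.e.\ a quotient $\widehat{\Z}/H$ of $\widehat{\Z}$, and that $\pi$ carries Ye's decomposition onto the decomposition of $\krat$ into the $d_n = [\krat : \overline{\langle n\rangle}]$ cosets of the open subgroup $\overline{\langle n\rangle} \defeq \overline{\{0,n,2n,\dots\}}$; concretely, each $X_{n,i}$ is the $\pi$-preimage of one such coset. Since $(X,T)$ is minimal, $\pi$ is semiopen, so $\pi(U)$ has non-empty interior; as the cosets of the finite-index subgroups $\overline{\langle m\rangle}$, $m\in\N$, form a basis for the topology of $\krat$, there are $N \in \N$ and $c \in \krat$ with $c + \overline{\langle N\rangle} \subseteq \pi(U)$. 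This $N$ is the one we output. To finish, take $n$ with $(n,N)=1$: then $\overline{\langle n\rangle} + \overline{\langle N\rangle}$ is an open (hence closed) subgroup of $\krat$ containing the image of $n\Z + N\Z = \Z$, which is dense, so $\overline{\langle n\rangle} + \overline{\langle N\rangle} = \krat$. Hence every coset of $\overline{\langle n\rangle}$ meets $c + \overline{\langle N\rangle}$, and since $c + \overline{\langle N\rangle} \subseteq \pi(U)$, pulling back along $\pi$ shows every $X_{n,i}$ meets $U$, as needed.

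The step carrying the real content is the structural assertion that the $T^n$-minimal clopen pieces of $X$ are precisely the $\pi$-preimages of the cosets of $\overline{\langle n\rangle}$ in $\krat$ (equivalently, that $d_n$ is the index of $\overline{\langle n\rangle}$); everything downstream is soft — semiopenness of factor maps of minimal systems, the coset basis of a procyclic group, and the B\'ezout step. If one wishes to bypass $\krat$, the same argument runs directly from Ye's theorem using only that $d_n \mid n$ (so $(n,N)=1$ forces $(d_n,d_N)=1$) together with a Chinese Remainder step among the quotients $X \to \Z/d_n\Z$; but choosing cyclically compatible labelings of the pieces $X_{n,i}$ across divisibility is exactly the bookkeeping that constructing $\krat$ packages, so I would present the factor-map version.
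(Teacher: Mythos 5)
Your proof is correct, but it follows a genuinely different route from the paper's. The paper argues combinatorially and never touches the Kronecker factor: by minimality there is $h$ with $X=\bigcup_{j=1}^h T^{-j}U$, and one shows that the ``bad set'' $B$ of those $n$ for which some $U_{n,i}$ is empty cannot contain $h$ pairwise coprime elements, by an explicit Chinese Remainder argument across the quotients $\Z/d_{n_j}\Z$; taking $N$ to be the radical of a maximal pairwise-coprime subset of $B$ then works. You instead push everything to the rational Kronecker factor $\pi\colon X\to Z$, identify $X_{n,i}=\pi^{-1}(Z_{n,i})$ with $Z_{n,i}$ a coset of the open subgroup $\overline{\langle n\rangle}\subseteq Z$ of index $d_n$ (the key structural fact, which is correct: $\overline{\langle n\rangle}$ is the $T^n$-orbit closure of the identity of $Z$, and by Ye's decomposition applied to $(Z,T)$ that orbit closure is exactly $Z_{n,0}=\ker(Z\to\Z/d_n\Z)$), use semiopenness of factor maps of minimal systems to place a coset $c+\overline{\langle N\rangle}$ inside $\pi(U)$, and close with B\'ezout in the procyclic group: for $(n,N)=1$, $\overline{\langle n\rangle}+\overline{\langle N\rangle}$ is an open subgroup containing the dense image of $n\Z+N\Z=\Z$, hence all of $Z$, so every $Z_{n,i}$ meets $\pi(U)$ and every $X_{n,i}$ meets $U$. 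Both arguments are sound. What yours buys is a transparent interpretation of $N$ as a resolution at which $U$ saturates a fibre of $\krat$, and a clean separation of the one structural input from soft topology. What the paper's buys is self-containment at the point it appears: the paper only constructs $\krat$ \emph{after} this lemma, so using your argument would require reordering \cref{sec:ratkronanddistallemma}. That reordering introduces no circularity, since the construction of $\krat$ depends only on Ye's decomposition and not on this lemma, but it is an expositional cost the paper chose not to pay.
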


\begin{proof}
It is equivalent to show that there exists $N\in\N$ such that for all $n\in \N$ with $(n,N)=1$, $\bigcup_{j=1}^\infty (T^n)^{-j} U=X$.

Because $T$ is minimal, there exists $h \in \N$ for which $X=\bigcup_{j=1}^h T^{-j}U$. The conclusion of the lemma will follow if we show that the set 
\[ B\defeq \left\{n\in\N \ \middle| \ \bigcup_{j=1}^\infty (T^n)^{-j}U\neq X \right\} \]
does not contain $h$ pairwise coprime elements.

Let $n\in B$.  There exists $0\leq i_n\leq d_n-1$ such that $U_{n,i_n}=\emptyset$. Indeed, if this was not the case, then for all $0 \leq i \leq d_n-1$, the set $U_{n,i}$ would be a non-empty, open subset of $X_{n,i}$. It would follow by the minimality of $(X_{n,i},T^n)$ that
\[X=\bigcup_{i=0}^{d_n-1} X_{n,i}=\bigcup_{i=0}^{d_n-1} \bigcup_{j=1}^\infty (T^n)^{-j}U_{n,i}=\bigcup_{j=1}^\infty (T^n)^{-j}U,\]
contradicting the fact that $n \in B$.

Suppose for a contradiction that $n_1, \ldots, n_h\in B$ are pairwise coprime. Since $d_{n_i}$ divides $n_i$, the numbers $d_{n_1}, \ldots, d_{n_h}$ are also pairwise coprime. For each $1\leq j\leq h$, let $0\leq i_{n_j}\leq d_{n_j}-1$ be the index for which $U_{n_j,i_{n_j}}=\emptyset$. By the Chinese Remainder Theorem, there exists $\ell \in \N$ such that for all $1\leq j\leq h,$ $\ell+j\equiv i_{n_j} \pmod{d_{n_j}}$. Using the fact that  $U_{n_j,i_{n_j}}=\emptyset$,
\[ (T^{-(\ell+j)}U) \cap X_{n_j,0}\subseteq T^{-(\ell+j)}(U\cap T^{\ell+j}X_{n_j,0})=T^{-(\ell+j)}U_{n_j,i_{n_j}}=\emptyset.\]
Thus, for all $1 \leq j \leq h$, $(T^{-(\ell+j)}U)\cap X_{n_j,0} = \emptyset$.
 
On the other hand, since $d_{n_1}, \ldots, d_{n_h}$ are pairwise coprime, again by the Chinese Remainder Theorem,
\[\bigcup_{n = 1}^\infty T^{-n} \bigcap_{j=1}^h X_{n_j,0} = \bigcup_{n = 1}^\infty \bigcap_{j=1}^h X_{n_j,-n} = X,\]
which implies that $\bigcap_{j=1}^h X_{n_j,0}\neq \emptyset$. Since $\bigcup_{j=1}^h T^{-j}U=X$, $\bigcup_{j=1}^h T^{-(\ell+j)}U=X$. Putting these facts together, we see
\[\bigcap_{j=1}^h X_{n_j,0} = \left(\bigcup_{j=1}^h T^{-(\ell+j)}U \right) \cap \left(\bigcap_{j=1}^h X_{n_j,0}\right) \subseteq \bigcup_{j=1}^h \left((T^{-(\ell+j)}U)\cap X_{n_j,0} \right),\]
a contradiction since the leftmost set was shown to be non-empty while the rightmost set was shown to be empty.
\end{proof}

\begin{proposition}\label{thm:epsdenseorbits}
Let $\xt$ be a minimal system. For all $\eps > 0$, there exists $N \in \N$ such that for all $n \in \N$ with $(n,N)=1$ and all $x \in X$, the $T^n$-orbit of $x$ is $\eps$-dense in $X$.
\end{proposition}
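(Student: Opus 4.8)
The plan is to deduce the statement from finitely many applications of \cref{lem:totalminimalatresolutionU}. First I would use the compactness of $X$ to fix a finite cover of $X$ by open balls $B(y_1,\eps/3),\ldots,B(y_k,\eps/3)$. For each $j\in\{1,\ldots,k\}$, applying \cref{lem:totalminimalatresolutionU} to the non-empty open set $U_j\defeq B(y_j,\eps/3)$ yields an integer $N_j\in\N$ such that for every $n\in\N$ with $(n,N_j)=1$ and every $x\in X$, the $T^n$-orbit closure $\orb_{T^n}(x)$ has non-empty intersection with $U_j$. I would then set $N\defeq\lcm(N_1,\ldots,N_k)$; since each $N_j$ divides $N$, any $n$ coprime to $N$ is simultaneously coprime to every $N_j$.

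Next, fix $n$ coprime to $N$, a point $x\in X$, and an arbitrary target $y\in X$; the goal is to produce $m\in\N_0$ with $d(T^{nm}x,y)<\eps$. Choose $j$ with $d(y,y_j)<\eps/3$. By the choice of $N_j$, the set $\orb_{T^n}(x)$ meets $U_j$, so there is a point $z\in\orb_{T^n}(x)$ with $d(z,y_j)<\eps/3$. Since $z$ lies in the closure of $\{T^{nm}x \mid m\in\N_0\}$, there is some $m\in\N_0$ with $d(T^{nm}x,z)<\eps/3$, and the triangle inequality gives $d(T^{nm}x,y)<\eps$. As $y$ was arbitrary, the $T^n$-orbit of $x$ is $\eps$-dense, completing the argument.

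This proof is essentially a packaging of \cref{lem:totalminimalatresolutionU}, so I do not anticipate a serious obstacle. The only points requiring mild care are (i) passing from the orbit-closure conclusion of the lemma to a statement about the orbit itself, which is why the extra $\eps/3$ of slack is built into the covering radius, and (ii) arranging a single modulus $N$ that works for all finitely many balls at once, which is handled by taking the least common multiple of the $N_j$.
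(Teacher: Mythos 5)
Your proof is correct and takes essentially the same route as the paper's: cover $X$ by finitely many small balls, invoke \cref{lem:totalminimalatresolutionU} once per ball, and combine the resulting moduli into a single $N$. The paper uses $\eps/2$-balls and $N=\prod_i N_i$ where you use $\eps/3$-balls and $N=\lcm(N_1,\ldots,N_k)$, but these differences are purely cosmetic; also note that since each $U_j$ is open, the orbit closure meeting $U_j$ already forces the orbit itself to meet $U_j$, so even the paper's $\eps/2$ slack suffices.
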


\begin{proof}
Let $\varepsilon>0.$ Since $X$ is compact, there exist $x_1, \ldots, x_h \in X$ for which $X=\bigcup_{i=1}^h B(x_i,\varepsilon/2)$. For each $1\leq i\leq h,$ \cref{lem:totalminimalatresolutionU} gives the existence of $N_i\in\N$ such that for all $n\in\N$ with $(n,N_i)=1$ and all $x\in X$, $\orb_{T^n}(x)\cap B(x_i,\varepsilon/2)\neq \emptyset.$ We claim that $N \defeq \prod_{i=1}^h N_i$ has the required property.

Let $n\in\N$ with $(n,N)=1$, $x\in X$, and $1 \leq i \leq h$. Since $(n,N_i)=1,$ \cref{lem:totalminimalatresolutionU} gives that $\orb_{T^n}(x)\cap B(x_i,\varepsilon/2)\neq \emptyset.$ Since $1 \leq i \leq h$ was arbitrary and $X=\bigcup_{i=1}^h B(x_i,\varepsilon/2)$, the orbit $\orb_{T^n}(x)$ is $\eps$-dense. 
\end{proof}

The family $\{\Z / d_n\Z \ | \ n \in \N\}$, directed via the maps $\Z / d_{nm}\Z \to \Z / d_n\Z$, gives rise to the \emph{rational topological Kronecker factor} of $\xt$: the system
\[\krat\xt \defeq \big(Z \defeq \varprojlim_{n \in \N} \Z / d_n \Z,T\big),\]
where $T: (a_n)_{n \in \N} \mapsto (a_n + 1)_{n \in \N}$ is a minimal rotation of the compact abelian group $Z$. Defining $Z_{n,i}$ just as it was defined for $X$ at the beginning of this section, we see that $d_n\zt = d_n\xt$ and, by the topology on $Z$, that the factor map $\pi: \xt \to \zt$ is defined uniquely by the property $\pi(X_{n,i}) = Z_{n,i}$. Also, note that for any non-empty, open set $V \subseteq Z$, there exists $n,i \in \N$ such that $Z_{n,i} \subseteq V$.

The goal for the remainder of this section is to prove \cref{lem:distalhavegoodmeasure}, a result related to \cref{lem:totalminimalatresolutionU} on the measure of the sets $U \cap X_{N,j}$ in distal systems.  This will be accomplished with the help of the topological Kronecker factor of $\xt$.

\begin{definition}\label{def:totallyvisible}
Let $\xt$ be a system, $U \subseteq X$ be open, non-empty, and $\mu$ be a $T$-invariant probability measure on $X$. The set $U$ is \emph{totally visible by $\mu$} if
\[\inf_{n,i \in \N} \mu_{n,i}(U) > 0.\]
The set $U$ is \emph{totally visible} if it is totally visible by some $T$-invariant probability measure $\mu$ on $X$.
\end{definition}

\begin{remark}\label{rmk:totallyminimalsystems}
If $\xt$ is totally minimal, then for all $n, i \in \N$, $X_{n,i} = X$ and $\krat\xt$ is trivial. For any $T$-invariant measure $\mu$ and all $n, i \in \N$, $\mu_{n,i} = \mu$.  It follows that in totally minimal systems, all open sets are totally visible by any invariant probability measure.
\end{remark}

\begin{lemma}\label{lem:distalhavegoodmeasure}
Let $\xt$ be a minimal, distal system, and let $U \subseteq X$ be a non-empty, open set.  There exists $N, j \in \N$ so that $U_{N,j}$ is totally visible in the system $(X_{N,j},T^N)$.
\end{lemma}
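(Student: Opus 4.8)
The plan is to combine the unique ergodicity of minimal distal systems with a disintegration of the invariant measure over the rational Kronecker factor $\zt = \krat\xt$. First I would observe that the statement is really about a single canonical measure. Since $\xt$ is minimal and distal it is uniquely ergodic; let $\mu$ denote its unique $T$-invariant measure, which has full support because $\xt$ is minimal. For every $N, j \in \N$ the system $(X_{N,j}, T^N)$ is minimal (by the structure result recalled at the start of this section) and distal (distality is inherited by powers of $T$ and by closed invariant subsets), hence uniquely ergodic, so its only invariant measure is $\mu_{N,j}$. Using $(\mu_{N,j})_{n,i} = \mu_{nN,iN+j}$ and $X_{nN,iN+j} \subseteq X_{N,j}$, the assertion that $U_{N,j}$ is totally visible in $(X_{N,j}, T^N)$ is exactly
\[ \inf_{n,i \in \N} \mu_{nN,iN+j}(U) > 0, \]
so it suffices to produce $N, j$ for which this infimum is positive.

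Next I would transfer the problem down to $Z$ through the factor map $\pi \colon \xt \to \zt$. Since $\pi^{-1}(Z_{m,\ell}) = X_{m,\ell}$ and $\mu(X_{m,\ell}) = 1/d_m$ (all $d_m$ of these clopen sets are cyclically permuted by $T$ and have equal $\mu$-measure), one has
\[ \mu_{m,\ell}(U) = d_m\,\mu\big(U \cap X_{m,\ell}\big) = \frac{\theta(Z_{m,\ell})}{\nu(Z_{m,\ell})}, \]
where $\nu \defeq \pi\mu$ is the Haar measure of $Z$ and $\theta$ is the finite measure on $Z$ given by $\theta(A) = \mu(U \cap \pi^{-1}A)$. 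Because $\theta \leq \nu$, we may write $\theta = g\,\nu$ with $0 \leq g \leq 1$ and $\int_Z g\,d\nu = \mu(U) > 0$, and $g(z) = \mu_z(U)$ for $\nu$-almost every $z$, where $\mu = \int_Z \mu_z\,d\nu(z)$ is the disintegration of $\mu$ over $\pi$. The key input from distality is that this disintegration admits a version in which $z \mapsto \mu_z$ is continuous in the weak-$\ast$ topology: by the Furstenberg structure theorem the distal extension $\pi$ is an inverse limit of isometric extensions, over which the (relatively invariant) fiber measures vary continuously. For such a version, $g(z) = \mu_z(U)$ is lower semicontinuous since $U$ is open.

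To conclude, since $\int_Z g\,d\nu = \mu(U) > 0$ there is $z_0 \in Z$ with $g(z_0) > 0$, and by lower semicontinuity the open set $\{z : g(z) > g(z_0)/2\}$ contains one of the basic clopen sets $Z_{N,j}$ (these form a basis for the topology of $Z$). For all $n, i$ the coset $Z_{nN,iN+j}$ lies inside $Z_{N,j}$, so $\mu_{nN,iN+j}(U)$, being the $\nu$-average of $g$ over $Z_{nN,iN+j}$, is at least $g(z_0)/2$. Hence $\inf_{n,i}\mu_{nN,iN+j}(U) \geq g(z_0)/2 > 0$, which by the first paragraph means $U_{N,j}$ is totally visible in $(X_{N,j}, T^N)$.

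The main obstacle is the continuity of the disintegration of $\mu$ over $\pi$; everything else is bookkeeping with the Kronecker factor. If one prefers not to quote distal structure theory wholesale, I would instead reach this continuity by factoring $\pi$ through the maximal equicontinuous factor of $\xt$, over which the fibers of the map to $Z$ are cosets of a fixed closed subgroup and the fiber measures are the corresponding Haar measures, hence visibly continuous, and then transporting this through the remaining distal extension.
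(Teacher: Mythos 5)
Your argument goes a genuinely different route than the paper's, and it is correct in outline, but it leans on a technical fact that is, in effect, the whole content of the lemma and that you only gesture at. Your plan is: (i) minimal distal $\Rightarrow$ uniquely ergodic, so the only candidate measures are the $\mu_{N,j}$; (ii) $\mu_{m,\ell}(U)$ is the average over $Z_{m,\ell}$ of $g(z)\defeq \mu_z(U)$ where $\mu = \int_Z \mu_z\,d\nu$; (iii) for a distal extension there is a weak-$\ast$ continuous version of $z\mapsto\mu_z$, so $g$ is lower semicontinuous; (iv) lsc $+\int g\,d\nu>0$ gives a clopen $Z_{N,j}$ on which $g$ is bounded below, and averages over subsets $Z_{nN,iN+j}$ inherit that bound. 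Steps (ii) and (iv) are clean bookkeeping, and step (i) is a true (if nontrivial) theorem. The load-bearing claim is (iii), and this is where your proof is incomplete: you assert it ``by the Furstenberg structure theorem,'' but the theorem you are invoking only tells you that $X\to Z$ factors through a transfinite tower of isometric extensions. You then need (a) continuity of fiber measures across a single isometric (not merely group) extension, where the compact group of isometries acts on fibers but need not act continuously on all of $X$, and (b) persistence of this continuity through inverse limits at limit ordinals. Both of these are plausible and, I believe, true, but neither is automatic, and your fallback paragraph (factor through the maximal equicontinuous factor) simply relocates the difficulty to the extension $X\to K$ without resolving it. As written, the proof has a real gap at exactly the step you flag as ``the main obstacle.''

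The paper's proof sidesteps all of this. It does not use unique ergodicity at all, and it never invokes a continuous disintegration. Instead it quotes Furstenberg's Lemma~8.1 from the distal structure paper: for any $\eps>0$ there is a \emph{finite} set $F\subseteq\pi^{-1}(z_0)$ whose translates $T^kF$ remain $\eps$-dense in $\pi^{-1}(T^kz_0)$ for every $k$. With $\eps$ chosen so that a neighborhood $W$ with $Z_{N,j}\subseteq\pi W$ has $B(x,2\eps)\subseteq\overline V$ for all $x\in W$, the uniform measure $\nu$ on $F$ satisfies $T^k\nu(\overline V\cap X_{nN,Ni+j})\geq (2|F|)^{-1}T^k\delta_{z_0}(Z_{nN,Ni+j})$ for every $k$, and one then takes a weak-$\ast$ limit of Ces\`aro averages of $T^k\nu$ and uses that $z_0$ is generic for Haar measure on $Z$. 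This yields the explicit bound $\eta=(2|F|)^{-1}$ with no disintegration of any kind. So the comparison is: your route is conceptually tidier once the continuous-disintegration-over-a-distal-factor lemma is in hand, but that lemma is not standard enough to cite bare and your sketch of it is not a proof; the paper's route is lower-tech, replaces the abstract conditional measures with a single explicit finitely-supported measure, and only requires the two concrete facts from Furstenberg's paper (openness of $\pi$ and the $\eps$-dense finite set lemma) that it cites precisely. If you want to salvage your approach, the missing continuity claim should either be proved from scratch (a careful transfinite induction, handling the isometric and limit cases separately) or replaced, as the paper does, by Furstenberg's finite $\eps$-net lemma, which can be used to manufacture the lsc lower bound on $g$ directly without ever constructing $\mu_z$.
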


\begin{proof}
Let $W \subseteq V \subseteq U$ and $\eps > 0$ be such that $W$ and $V$ are non-empty and open, $\overline{V} \subseteq U$, and for all $x \in W$, $B(x,2\eps) \subseteq \overline{V}$. Put $\zt= \krat\xt$. By \cite[Theorem 8.1]{furstenbergdistalstructuretheory}, the factor map $\pi: X \to Z$ is open, so there exists $N, j \in \N$ such that $Z_{N,j} \subseteq \pi W$.

Let $z_0 \in Z$. By \cite[Lemma 8.1]{furstenbergdistalstructuretheory}, there exists a finite set $F \subseteq \pi^{-1}(\{z_0\})$ such that for all $n \in \Z$, the set $T^n F$ is $\eps$-dense in the fiber $\pi^{-1}(\{T^{n}z_0\})$.

Put $\eta = (2|F|)^{-1}$, and let
\[\nu = \frac{1}{|F|} \sum_{f \in F} \delta_f.\]
We claim that for all $n, i \in \N$ and all $k \in \Z$,
\begin{align}\label{eqn:fiberinequality}T^k \nu \big(\overline{V} \cap X_{nN,Ni+j} \big) \geq \eta T^k \delta_{z_0} \big( Z_{nN,Ni+j} \big).\end{align}
To see why, note that the right hand side is zero unless $T^k z_0 \in Z_{nN,Ni+j}$. Suppose that $T^k z_0 \in Z_{nN,Ni+j}$. Because $Z_{nN,Ni+j} \subseteq Z_{N,j} \subseteq \pi W$, there exists a point $x \in \pi^{-1}(\{T^k z_0\}) \cap W$. Since $x \in W$, $B(x,2\eps) \subseteq \overline{V}$. Because $T^kF$ is $\eps$-dense in $\pi^{-1}(\{T^{k}z_0\})$, at least one point of $T^k F$ is in $\overline{V}$. This combined with the fact that $T^k \nu$ is supported on $\pi^{-1}(\{T^{k}z_0\}) \subseteq X_{nN,Ni+j}$ implies that $T^k \nu \big(\overline{V} \cap X_{nN,Ni+j} \big) \geq \eta$, showing (\ref{eqn:fiberinequality}).

Let $\mu$ be a weak-$\ast$ limit point of the set $\big\{ N^{-1} \sum_{k=0}^{N-1} T^k \nu \ \big| \ N \in \N \big\}$. We claim that the set $U_{N,j}$ is visible by the measure $\mu_{N,j}$ in the system $(X_{N,j}, T^N)$. We must show that for all $n, i \in \N$,
\begin{align}\label{eqn:visibilityinequality}(\mu_{N,j})_{n,i} (U_{N,j}) \geq \eta.\end{align}

Let $n, i \in \N$, and recall that $d_n = d_n\xt = d_n\zt$. Using (\ref{eqn:fiberinequality}), we see
\begin{align*}(\mu_{N,j})_{n,i} (U_{N,j}) &= \mu_{nN,Ni+j}(U)\\
&= d_{nN} \mu(U \cap X_{nN,Ni+j})\\
&\geq d_{nN}\mu(\overline{V} \cap X_{nN,Ni+j})\\
&\geq d_{nN}\liminf_{N \to \infty} \frac 1{N} \sum_{k=0}^{N - 1} T^k \nu (\overline{V} \cap X_{nN,Ni+j})\\
& \geq d_{nN}\eta \liminf_{N \to \infty} \frac 1{N} \sum_{k=0}^{N - 1} T^k \delta_{z_0} \big( Z_{nN,Ni+j} \big) = \eta,
\end{align*}
where the last equality follows from the ergodic theorem because $z_0$ is generic for the Haar measure on $Z$ and the Haar measure of $Z_{nN,Ni+j}$ is $1/d_{nN}$. This establishes (\ref{eqn:visibilityinequality}), concluding the proof.
\end{proof}

\section{Dynamics on the orbit closure of the diagonal}\label{sec:dynamicsondiag}

Suppose $\xt$ is minimal and invertible, and fix $\ell \in \N$ and $\vec m \in \N^\ell$. Put $M = \lcm(\vec m)$, and let $d_M = d_M\xt$ be as described in Section~\ref{sec:ratkronanddistallemma}. We will now prove some preliminary results concerning dynamics of points along the diagonal of $X^\ell$ and points of continuity of the orbit closure map $\orb_{T^{m_1} \times \cdots \times T^{m_\ell}}$.

Let $\Delta: X \to X^\ell$, $x \mapsto (x,\ldots,x)$, be the diagonal injection. Let
\begin{align*}
\Delta(T) &\defeq T \times \cdots \times T,\\
T^{\vec m} &\defeq T^{m_1} \times \cdots \times T^{m_\ell},\\
\dX & \defeq \orb_{T^{\vec m}}\big(\Delta(X) \big) \defeq \overline{\bigcup_{n \in \N_0} (T^{\vec m})^{n} \Delta(X)} \subseteq X^\ell,\\
\dX_{M,j} &\defeq \orb_{T^{\vec m}}\big(\Delta(X_{M,j}) \big) \subseteq \dX, \ j \in \{0,\ldots,d_M-1\}.
\end{align*}

Note that because $T$ is a homeomorphism, $\Delta(T)$ and $T^{\vec m}$ are commuting homeomorphisms of $X^\ell$.

\begin{theorem}\label{thm:glasnersystemisminimal}
The maps $\Delta(T)$ and $T^{\vec m}$ are homeomorphisms of $\dX$, and the system $\big(\dX,\Delta(T), T^{\vec m} \big)$ is minimal.
\end{theorem}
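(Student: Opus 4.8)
The plan is to treat the three assertions in turn. That $\Delta(T)$ restricts to a homeomorphism of $\dX$ is immediate: since $T$ is onto, $\Delta(T)\Delta(X) = \Delta(TX) = \Delta(X)$, and $\Delta(T)$ commutes with $T^{\vec m}$, so $\Delta(T)$ permutes the sets $(T^{\vec m})^n\Delta(X)$ and hence fixes $\dX$ setwise; the same applies to $\Delta(T)^{-1} = \Delta(T^{-1})$. Also $T^{\vec m}\dX \subseteq \dX$ by construction. I would defer the surjectivity of $T^{\vec m}$ on $\dX$: once minimality of $\big(\dX, \Delta(T), T^{\vec m}\big)$ is established, $T^{\vec m}\dX$ is a non-empty closed subset of $\dX$ invariant under $\Delta(T)$ and forward-invariant under $T^{\vec m}$, hence equals $\dX$. (Alternatively, $(T^{\vec m})^{-1}\Delta(X) \subseteq \dX$ can be checked directly: for $x\in X$ and $\eps>0$, a ball $B(x,\delta)$ with $\delta$ small produces, via the topological van der Waerden theorem \cref{thm:dvdw} applied with parameter $\max_i m_i$, a point $z$ and $k\geq 1$ with $T^{jk}z\in B(x,\delta)$ for $0\leq j\leq \max_i m_i$; then $(T^{\vec m})^{k-1}\Delta(z)$ is within $\eps$ of $\big(T^{-m_1}x,\ldots,T^{-m_\ell}x\big)$.)

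For minimality, observe first that for any diagonal point $o_{\Delta(T),T^{\vec m}}(\Delta(x)) = \bigcup_{b\geq 0}(T^{\vec m})^b\{\Delta(T^ax):a\geq 0\}$, whose closure is $\overline{\bigcup_{b\geq 0}(T^{\vec m})^b\Delta(X)} = \dX$ because $\overline{o_T(x)} = X$. Since orbit closures are closed and invariant under both maps and $\Delta(X)$ is $\Delta(T)$-minimal, this reduces the whole statement to the claim that for every $z\in\dX$, the orbit closure $\orb_{\Delta(T),T^{\vec m}}(z)$ contains at least one point of $\Delta(X)$: if it contains $\Delta(x_0)$, then it contains $\overline{o_{\Delta(T)}(\Delta(x_0))} = \Delta(X)$, hence it contains $\overline{o_{\Delta(T),T^{\vec m}}(\Delta(x_0))} = \dX$.

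To prove the claim, write $z=(z_1,\ldots,z_\ell)$ and note $\Delta(T)^a(T^{\vec m})^b z = \big(T^{a+m_1b}z_1,\ldots,T^{a+m_\ell b}z_\ell\big)$, which is within $\eps$ of $\Delta(z_1)$ as soon as $a+m_ib\in R\big(z_i,B(z_1,\eps)\big)$ for every $i$. So everything comes down to finding, for each $\eps>0$, non-negative integers $a,b$ with $a+m_ib\in R\big(z_i,B(z_1,\eps)\big)$ simultaneously for $i=1,\ldots,\ell$; letting $\eps\to 0$ then shows $\Delta(z_1)\in\orb_{\Delta(T),T^{\vec m}}(z)$. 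Each of these return time sets is syndetic by minimality of $\xt$, but $\ell$ syndetic sets need not be met simultaneously by an affine family $a+m_ib$ — the obstruction is a congruence one, and this is exactly where the rational Kronecker factor of \cref{sec:ratkronanddistallemma} enters. Choosing $\eps$ small enough that $B(z_1,\eps)$ lies inside a single clopen piece $X_{M,s_0}$ (with $M=\lcm(\vec m)$), each $R\big(z_i,B(z_1,\eps)\big)$ is supported on one residue class modulo $d_M$, determined by the piece $X_{M,r_i}\ni z_i$. Writing $z$ as a limit of points $(T^{\vec m})^{n_k}\Delta(x_k)$ and passing to a subnet along which $x_k$ lies in a fixed piece $X_{M,j_\infty}$ and $n_k\equiv n_\infty \pmod{d_M}$, one gets $r_i\equiv j_\infty+n_\infty m_i\pmod{d_M}$, which makes the congruences $a+m_ib\equiv s_0-r_i\pmod{d_M}$ solvable — take $b\equiv 0\pmod{d_M}$ and large, forcing $a\equiv s_0-j_\infty\pmod{d_M}$. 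Restricting $a$ and $b$ to the solved classes, the problem becomes that of hitting finitely many return time sets to a fixed open set in the minimal system $(X_{M,s_0},T^{d_M})$ with the leading obstruction removed, which one handles by iterating (descending through the $d_n$) and a bounded-gaps pigeonhole argument that exploits the remaining freedom in $b$.

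The main obstacle is the combinatorial heart of this last step, and there are two delicate points. First, the passage from "$z$ is a limit of points $(T^{\vec m})^{n_k}\Delta(x_k)$" to control of the individual coordinates $z_i$ is not a matter of continuity, since the powers of $T$ involved are unbounded; one must work along a carefully chosen subnet, or at the level of the enveloping semigroup of $\xt$. Second, one must check that after removing the mod-$d_M$ obstruction no finer congruence obstruction survives to prevent the affine family $a+m_ib$ from meeting all the return time sets at once — controlling this is precisely what the rational Kronecker factor and the decomposition $\dX=\bigcup_j\dX_{M,j}$ are built for, so I expect the clean way to organize the whole minimality argument is to first move $z$ into a single component $\dX_{M,j}$ and run the argument there.
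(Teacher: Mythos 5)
Your proposal takes a genuinely different route from the paper's, and it has a substantial gap at exactly the point you flag.

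The paper's proof is short: it defines the two-sided closure $\X^\Delta = \overline{\bigcup_{n \in \Z} (T^{\vec m})^n \Delta(X)}$, observes that $\Delta(T)$ and $T^{\vec m}$ are visibly homeomorphisms of this set, quotes Glasner's theorem (\cite[Theorem 5.1]{glasnertopergodicdecomp}) for minimality of $(\X^\Delta, \Delta(T), T \times T^2 \times \cdots \times T^N)$ with $N = \max_i m_i$, projects onto the coordinates indexed by $m_1, \ldots, m_\ell$ to get minimality for general $\vec m$ (factors of minimal systems are minimal), and finally invokes \cref{lem:semigroupandgroupminimality} to conclude $\dX = \X^\Delta$. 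All the analytic content is outsourced to Glasner.

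Your plan, by contrast, attempts a direct proof. The reduction is correct and worth stating: the diagonal is itself an orbit closure of $(\Delta(T), T^{\vec m})$, so it suffices to show that every orbit closure in $\dX$ meets $\Delta(X)$; your direct argument for $(T^{\vec m})^{-1}\Delta(X) \subseteq \dX$ via \cref{thm:dvdw} also goes through (with $T$ invertible, as assumed in that section). But the heart of the matter — that for each $z = (z_1,\ldots,z_\ell) \in \dX$ and each $\eps > 0$ one can find $a, b \geq 0$ with $a + m_i b \in R(z_i, B(z_1,\eps))$ simultaneously for all $i$ — is exactly Glasner's theorem in disguise, and you do not prove it. You correctly reduce the mod-$d_M$ congruence obstruction using the clopenness of the $X_{M,j}$'s (incidentally, this also disposes of your ``first delicate point'': no enveloping-semigroup machinery is needed there, since $z_i \in X_{M,r_i}$ is determined by the fact that the pieces are clopen and convergence is coordinate-wise — also note the residue class you should impose on $b$ is $-n_\infty$, not $0$, modulo $d_M$). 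But then you write ``iterating (descending through the $d_n$) and a bounded-gaps pigeonhole argument'' and explicitly label this step as the ``main obstacle.'' That is not an argument. Iterating down the Kronecker tower need not terminate, and meeting $\ell$ syndetic sets with an affine family $a + m_i b$ is precisely the kind of multiple-recurrence statement for which one needs either a structure theorem or a van der Waerden--style device; none is supplied. In short, the architecture is sound and the direct route is a legitimate alternative in spirit, but the proposal stops where the actual work begins.
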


\begin{proof}
Let
\begin{align}\label{eqn:definitionofxsuperdelta}\X^\Delta \defeq \overline{\bigcup_{n \in \Z} (T^{\vec m})^{n} \Delta(X)} \subseteq X^\ell.\end{align}
It is immediate that $\Delta(T)$ and $T^{\vec m}$ are homeomorphisms of $\X^\Delta$. It is proved in \cite[Theorem 5.1]{glasnertopergodicdecomp} that the system $(\X^\Delta, \Delta(T), T^{\vec m})$ is minimal in the case that $\vec m = (1, 2, \ldots, \ell)$. Since factors of minimal systems are minimal, and since $(\X^\Delta,\Delta(T), T^{\vec m})$ is a factor of a system to which Glasner's theorem applies (for example, the one corresponding to the vector $(1,2,\ldots, \max_i \vec m_i)$), it is minimal.

Let $\vec x \in \dX \subseteq \X^\Delta$. Since $\X^\Delta$ is minimal, by \cref{lem:semigroupandgroupminimality},
\[\X^\Delta = \overline{\big\{\Delta(T)^n (T^{\vec m})^k \vec x \ \big| \ n, k \in \N \big\}} \subseteq \dX.\]
This shows that $\dX = \X^\Delta$. Therefore, $\Delta(T)$ and $T^{\vec m}$ are homeomorphisms of $\dX$ and $\big(\dX,\Delta(T), T^{\vec m} \big)$ is minimal.
\end{proof}

\begin{theorem}\label{thm:subsystemsareminimal}
The $\dX_{M,j}$'s are mutually disjoint, clopen, and 
\[\dX = \dX_{M,0} \cup \cdots \cup \dX_{M,d_M-1}.\]
The maps $\Delta(T)^M$ and $T^{\vec m}$ are homeomorphisms of $\dX_{M,j}$, and the system $\big(\dX_{M,j},\allowbreak \Delta(T)^M, T^{\vec m} \big)$ is minimal.
\end{theorem}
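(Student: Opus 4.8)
The plan is to prove the three assertions of the statement in turn, deducing everything from \cref{thm:glasnersystemisminimal}. \textbf{Clopen partition.} Let $c\colon X\to\Z/d_M\Z$ be the locally constant map with $c^{-1}(i)=X_{M,i}$, and set $C\defeq c\times\cdots\times c\colon X^\ell\to(\Z/d_M\Z)^\ell$. Since $T^kX_{M,i}=X_{M,i+k}$, the map $C$ intertwines $\Delta(T)$ with translation by $(1,\dots,1)$ and $T^{\vec m}$ with translation by $\vec m$. Hence $C(\dX_{M,j})$ is the orbit $\mathcal O_j$ of $(j,\dots,j)$ under translation by $\vec m$ in the finite group $(\Z/d_M\Z)^\ell$. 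These orbits are pairwise disjoint: if $(j,\dots,j)+k\vec m=(j',\dots,j')$, then $km_i\equiv km_1\pmod{d_M}$ for all $i$, and reducing modulo any prime power exactly dividing $d_M$ — which, since $d_M\mid M=\lcm(\vec m)$, divides some $m_{i_0}$ — shows $km_{i_0}\equiv 0$, whence $km_1\equiv j'-j\equiv 0\pmod{d_M}$ and $j=j'$. Since $\dX=\overline{\bigcup_n(T^{\vec m})^n\Delta(X)}=\bigcup_j\overline{\bigcup_n(T^{\vec m})^n\Delta(X_{M,j})}=\bigcup_j\dX_{M,j}$ (a finite union, so the closure distributes) and $\dX_{M,j}\subseteq C^{-1}(\mathcal O_j)$, disjointness of the $\mathcal O_j$ forces that of the $\dX_{M,j}$; a finite partition of the compact space $\dX$ into nonempty closed sets consists of clopen sets.

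\textbf{Homeomorphism claims.} By \cref{thm:glasnersystemisminimal}, $\Delta(T)$ and $T^{\vec m}$ are homeomorphisms of $\dX$. From $T^{\vec m}o_{T^{\vec m}}(\Delta(X_{M,j}))\subseteq o_{T^{\vec m}}(\Delta(X_{M,j}))$ and $\Delta(T)\Delta(X_{M,j})=\Delta(X_{M,j+1})$, passing to orbit closures gives $T^{\vec m}\dX_{M,j}\subseteq\dX_{M,j}$ and $\Delta(T)\dX_{M,j}=\dX_{M,j+1}$ (indices mod $d_M$). Since $\Delta(T)$ and $T^{\vec m}$ are bijections of $\dX$ that permute, respectively preserve, the blocks of the partition, these inclusions are equalities; in particular $\Delta(T)^{d_M}$, hence $\Delta(T)^M$ (as $d_M\mid M$), and $T^{\vec m}$ restrict to homeomorphisms of each $\dX_{M,j}$.

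\textbf{Minimality.} First I would show $(\dX_{M,j},\Delta(T)^{d_M},T^{\vec m})$ is minimal: for $y\in\dX_{M,j}$, put $A\defeq\orb_{\Delta(T)^{d_M},T^{\vec m}}(y)\subseteq\dX_{M,j}$; splitting exponents of $\Delta(T)$ modulo $d_M$ shows the $\langle\Delta(T),T^{\vec m}\rangle$-orbit of $y$ lies in $\bigcup_{e=0}^{d_M-1}\Delta(T)^eo_{\Delta(T)^{d_M},T^{\vec m}}(y)$, so minimality of $(\dX,\Delta(T),T^{\vec m})$ gives $\dX=\bigcup_{e=0}^{d_M-1}\Delta(T)^eA$, and intersecting with the clopen block $\dX_{M,j}$ (using $\Delta(T)^eA\subseteq\dX_{M,(j+e)\bmod d_M}$) leaves only the term $e=0$, i.e.\ $A=\dX_{M,j}$. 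The \textbf{main obstacle} is upgrading to $(\dX_{M,j},\Delta(T)^M,T^{\vec m})$ minimal, a strictly stronger claim when $d_M<M$: the same argument with exponents modulo $M$ yields only $\dX_{M,j}=\bigcup_{t=0}^{M/d_M-1}\Delta(T)^{td_M}A$ for $A=\orb_{\Delta(T)^M,T^{\vec m}}(y)$, so one must still show $\Delta(T)^{d_M}y\in A$. I would deduce this by passing to the maximal equicontinuous factor $K$ of the (now known minimal) system $(\dX_{M,j},\Delta(T)^{d_M},T^{\vec m})$: with $\kappa_1,\kappa_2\in K$ the rotations induced by $\Delta(T)^{d_M}$ and $T^{\vec m}$, one has $\overline{\langle\kappa_1,\kappa_2\rangle}=K$, and the claim becomes $\kappa_1\in\overline{\langle\kappa_1^{M/d_M},\kappa_2\rangle}$. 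Because $\kappa_1$ and $\kappa_2$ are the diagonal and $\vec m$-twisted images of the Kronecker data of $X$, and because $d_M\mid\lcm(\vec m)$, this reduces to a divisibility computation — in the model case $X=\Z/N\Z$ one picks $b$ so that translation by $b\vec m$ annihilates the off-diagonal coordinates and $d_M\mid bm_1$, then corrects by a power of $\kappa_1^{M/d_M}$. Granting this, $A=\dX_{M,j}$, which finishes the proof.
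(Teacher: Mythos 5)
Your arguments for the clopen partition and the homeomorphism claims are correct. The disjointness argument via the locally constant map $C\colon X^\ell\to(\Z/d_M\Z)^\ell$ is a clean repackaging of the paper's sequential argument (which extracts congruences from convergent sequences $(T^{\vec m})^{n_k}\Delta(y_k)\to\vec x$ landing in two blocks); you project to the finite group and see the same arithmetic in one step, the endgame in both being that $d_M\mid\lcm(\vec m)$ forces $d_M\mid j-j'$. Your counting argument for the homeomorphism claims (a bijection of $\dX$ satisfying $T^{\vec m}\dX_{M,j}\subseteq\dX_{M,j}$ on each block of a finite partition must preserve every block) is a valid, somewhat more elementary replacement for the paper's route through the two-sided orbit closures $\X^\Delta_{M,j}$.

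The minimality argument has a genuine gap, which you flag yourself. You establish minimality of $(\dX_{M,j},\Delta(T)^{d_M},T^{\vec m})$, but the theorem asserts it for $\Delta(T)^M$, strictly stronger when $d_M<M$, and your intersection step leaves $\dX_{M,j}=\bigcup_{t=0}^{M/d_M-1}\Delta(T)^{td_M}A$ with no way to collapse the union. The proposed bridge via the maximal equicontinuous factor of $(\dX_{M,j},\Delta(T)^{d_M},T^{\vec m})$ is not a divisibility computation: you would need a $\Z^2$ version of the fact that minimality of powers is detected on the equicontinuous factor, and then an explicit description of that factor, which for a subsystem of $X^\ell$ is not determined by ``the Kronecker data of $X$'' in any simple way and is itself a nontrivial structural question. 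Crucially, your sketch never invokes the one hypothesis built into the setup that makes everything work: by \cite[Theorem 3.1]{ye1992}, the system $(X_{M,j},T^M)$ is minimal. With that in hand the argument is short. For $x\in X_{M,j}$, the $\Delta(T)^M$-orbit of $\Delta(x)$ is dense in $\Delta(X_{M,j})$, so, commuting with $(T^{\vec m})^k$ and taking closures, the $(\Delta(T)^M,T^{\vec m})$-orbit closure of $\Delta(x)$ contains $\orb_{T^{\vec m}}(\Delta(X_{M,j}))=\dX_{M,j}$. For an arbitrary $\vec x\in\dX_{M,j}$, use the minimality of $(\dX,\Delta(T),T^{\vec m})$ from \cref{thm:glasnersystemisminimal} to find $(T^{\vec m})^{a_n}\Delta(T)^{b_n}\vec x\to\Delta(w)$ with $w\in X_{M,j}$; reduce $b_n$ to a fixed residue $b$ modulo $M$, deduce $d_M\mid b$ from the block structure, and replace $w$ by $T^{-b}w\in X_{M,j}$ to arrange $b_n\equiv 0\pmod M$, so that a diagonal point of $\Delta(X_{M,j})$ lies in the $(\Delta(T)^M,T^{\vec m})$-orbit closure of $\vec x$. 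I recommend replacing your equicontinuous-factor detour with this argument; as written, the minimality claim remains unproved.
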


\begin{proof}
Since $X = \bigcup_{j=0}^{d_M-1} X_{M,j}$, it follows immediately from the definition of $\dX$ that
\[\dX = \dX_{M,0} \cup \cdots \cup \dX_{M,d_M-1}.\]
We will show next that the $\dX_{M,j}$'s are mutually disjoint. Since they are closed, disjointness will imply that the $\dX_{M,j}$'s are open, hence clopen.

Suppose $j, j' \in \{0,\ldots,d_M-1\}$ are such that $\dX_{M,j} \cap \dX_{M,j'} \neq \emptyset$; we will show that $j = j'$. Let $ \vec x = (x_1,\ldots,x_\ell) \in \dX_{M,j} \cap \dX_{M,j'}$. By the definition of $\dX_{M,j}$, there exist sequences $(n_k)_{k \in \N}, (n_k')_{k \in \N} \subseteq \N$, $(y_k)_{k \in \N} \subseteq X_{M,j}$, and $(y_k')_{k \in \N} \subseteq X_{M,j'}$ so that
\[ \lim_{k \to \infty}(T^{\vec m})^{n_k} \Delta(y_k) = \lim_{k \to \infty} (T^{\vec m})^{n_k'} \Delta(y_k') =\vec x.\]
It follows that for each $i \in \{1,\ldots,\ell\}$,
\[ \lim_{k \to \infty} T^{m_i n_k} y_k = \lim_{k \to \infty} T^{m_i n_k'} y_k' = x_i.\]
For each $i \in \{1, \ldots, \ell\}$, let $j_i \in \{0,\ldots,d_M-1\}$ be such that $x_i \in X_{M,j_i}$. Since $T^{m_i n_k} y_k \in X_{M,j+ m_i n_k}$ and $x_i \in X_{M,j_i}$, it must be that $j+m_i n_k \equiv j_i \pmod {d_M}$ for all sufficiently large $k$. Similarly, we can conclude that $j'+m_i n_k'  \equiv j_i \pmod {d_M}$, meaning $m_i (n_k' - n_k) \equiv j - j' \pmod {d_M}$ for all sufficiently large $k$. This implies that $j-j'$ is a multiple of $(m_i,d_M)$ for all $i \in \{1,\ldots,\ell\}$, whereby $(M,d_M)$ divides $j-j'$. Since $(M,d_M) = d_M$, it follows that $j \equiv j' \pmod {d_M}$, implying $j=j'$.

For $j \in \{0,\ldots, d_M-1\}$, let
\[\X^\Delta_{M,j} \defeq \overline{\bigcup_{n \in \Z} (T^{\vec m})^{n} \Delta(X_{M,j})} \subseteq X^\ell.\]
Since $T X_{M,j} = X_{M,j+1}$, we have that $\Delta(T)^M$ and $T^{\vec m}$ are homeomorphisms of $\X^\Delta_{M,j}$. It also follows that $\Delta(T)\X^\Delta_{M,j} = \X^\Delta_{M,j+1}$ and that 
\[\X^\Delta = \X^\Delta_{M,0} \cup \cdots \cup \X^\Delta_{M,d_M-1},\]
where $\X^\Delta$ is as defined in (\ref{eqn:definitionofxsuperdelta}). It was shown in that proof that $\dX = \X^\Delta$; combining this with the facts that $\dX_{M,j} \subseteq \X^\Delta_{M,j}$ and that the $\dX_{M,j}$'s are mutually disjoint, we see $\X^\Delta_{M,j} = \dX_{M,j}$. This shows that $\Delta(T)^M$ and $T^{\vec m}$ are homeomorphisms of $\dX_{M,j}$.

To show that $(\dX_{M,j},\Delta(T)^M,T^{\vec m})$ is minimal, we will show that every point has a dense orbit, starting with points on the diagonal.  Let $x \in X_{M,j}$ and consider 
\[Y \defeq \overline{\big\{\Delta(T)^{Mn} (T^{\vec m})^k \Delta(x) \ \big| \ n, k \in \N_0 \big\}} \subseteq \dX_{M,j}.\]
Since $(X_{M,j},T^M)$ is minimal, $\Delta(X_{M,j}) \subseteq Y$, and, moreover, for all $k \in \N$, $(T^{\vec m})^k \allowbreak \Delta(X_{M,j}) \subseteq Y$. Since $Y$ is closed, it follows that $\dX_{M,j} \subseteq Y$, which implies that $Y = \dX_{M,j}$.  Thus, points on the diagonal have a dense orbit.

Let $\vec x \in \dX_{M,j}$, and let $Y$ be the $(\Delta(T)^M,T^{\vec m})$-orbit closure of $\vec x$. Let $w \in X_{M,j}$, and note that $\Delta(w) \in \dX_{M,j}$. By \cref{thm:glasnersystemisminimal}, the system $\big(\dX,\Delta(T), T^{\vec m} \big)$ is minimal, so there exists a sequence $\big((a_n,b_n) \big)_{n \in \N} \subseteq \N^2$ for which $(T^{\vec m})^{a_n} \Delta(T)^{b_n}\vec x \to \Delta(w)$ as $n \to \infty$. By passing to a subsequence, we may assume that there exists $0 \leq b \leq M -1$ such that for all $n \in \N$, $b_n \equiv b \pmod M$. Since $((T^{\vec m})^{a_n} \Delta(T)^{b_n}\vec x)_{n \in \N} \subseteq \dX_{M,j+b}$, $\Delta(w) \in \dX_{M,j+b}$. Since the $\dX_{M,j}$'s are disjoint and $\Delta(w) \in \dX_{M,j} \cap \dX_{M,j+b}$, $b= 0$, whereby $\Delta(w) \in Y$. It follows now from the previous paragraph that $Y = \dX_{M,j}$.
\end{proof}

\begin{lemma}\label{lem:nonemptyinteriortwo}
For all open, non-empty $U \subseteq X_{M,j}$, the set
\[X_U^\Delta \defeq \orb_{T^{\vec m}}\big(\Delta(U) \big)\]
has non-empty interior in $\dX_{M,j}$.
\end{lemma}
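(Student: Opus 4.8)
The plan is to express $\dX_{M,j}$ as a finite union of homeomorphic copies of $X_U^\Delta$ and then invoke the Baire category theorem.

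First I would use that $(X_{M,j},T^M)$ is minimal (as recalled at the start of \cref{sec:ratkronanddistallemma}): since $U \subseteq X_{M,j}$ is non-empty and open, every forward $T^M$-orbit in $X_{M,j}$ meets $U$, so by compactness there is $h \in \N$ with $X_{M,j} = \bigcup_{k=0}^{h} (T^M)^{-k} U$. Applying the diagonal injection and using $\Delta(T)^{-Mk}\Delta(u) = \Delta(T^{-Mk}u)$ together with $T^M X_{M,j} = X_{M,j}$ (which holds because $d_M \mid M$), this rewrites as
\[\Delta(X_{M,j}) = \bigcup_{k=0}^{h} \Delta(T)^{-Mk}\,\Delta(U),\]
where every set on the right lies in $\Delta(X_{M,j}) \subseteq \dX_{M,j}$.

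Next I would apply $\orb_{T^{\vec m}}$ to both sides. The forward $T^{\vec m}$-orbit of a finite union is the union of the forward orbits, and closure commutes with finite unions, so
\[\dX_{M,j} = \orb_{T^{\vec m}}\big(\Delta(X_{M,j})\big) = \bigcup_{k=0}^{h} \orb_{T^{\vec m}}\big(\Delta(T)^{-Mk}\Delta(U)\big).\]
By \cref{thm:subsystemsareminimal}, $\Delta(T)^M$ and $T^{\vec m}$ are commuting homeomorphisms of $\dX_{M,j}$, hence $\orb_{T^{\vec m}}$ commutes with $(\Delta(T)^M)^{-k}$, giving $\orb_{T^{\vec m}}\big(\Delta(T)^{-Mk}\Delta(U)\big) = (\Delta(T)^M)^{-k}\,X_U^\Delta$. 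Therefore
\[\dX_{M,j} = \bigcup_{k=0}^{h} (\Delta(T)^M)^{-k}\,X_U^\Delta.\]
Finally, $X_U^\Delta$ is closed, being an orbit closure, so each $(\Delta(T)^M)^{-k}\,X_U^\Delta$ is a closed subset of the non-empty compact metric space $\dX_{M,j}$. By the Baire category theorem some $(\Delta(T)^M)^{-k_0}\,X_U^\Delta$ has non-empty interior in $\dX_{M,j}$, and applying the homeomorphism $(\Delta(T)^M)^{k_0}$ shows $X_U^\Delta$ itself has non-empty interior in $\dX_{M,j}$.

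I do not anticipate a genuine obstacle here: the argument is a routine Baire category covering. The only points that require care are bookkeeping ones: reading every set inside the ambient space $\dX_{M,j}$ (which is exactly where $T^M X_{M,j} = X_{M,j}$ is needed), and remembering that it is $\Delta(T)^M$, not $\Delta(T)$, that is available as a homeomorphism of $\dX_{M,j}$ from \cref{thm:subsystemsareminimal}.
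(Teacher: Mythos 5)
Your proof is correct and is essentially the paper's argument: both use minimality of $(X_{M,j},T^M)$ to cover $X_{M,j}$ by finitely many $T^{-Mk}U$, push this through $\Delta$ and $\orb_{T^{\vec m}}$ to write $\dX_{M,j}$ as a finite union of translates $\Delta(T)^{-Mk}X_U^\Delta$, and then apply Baire category together with the fact that $\Delta(T)^M$ is a homeomorphism of $\dX_{M,j}$ (from \cref{thm:subsystemsareminimal}). The only difference is that you spell out the intermediate steps (applying $\Delta$, commuting $\orb_{T^{\vec m}}$ with $(\Delta(T)^M)^{-k}$, keeping all sets inside $\dX_{M,j}$) that the paper leaves implicit.
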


\begin{proof}
Let $U \subseteq X_{M,j}$ be open, non-empty. Since $(X_{M,j},T^M)$ is minimal, there exists $h \in \N$ such that
\[X_{M,j}^\Delta = \bigcup_{i=1}^h X_{T^{-Mi}U}^\Delta = \bigcup_{i=1}^h \Delta(T)^{-Mi} X_{U}^\Delta.\]
Since $X_{M,j}^\Delta$ is a Baire space (it is a compact metric space), some $\Delta(T)^{-Mi} X_{U}^\Delta$ has non-empty interior. Since $\Delta(T)^M$ is a homeomorphism, it is open, implying that $X_{U}^\Delta$ has non-empty interior.
\end{proof}

\begin{proposition}\label{prop:pointsofcontinuityalongthediagonal}
Let $\cont$ be the set of points of continuity of the map $\orb_{T^{\vec m}}: \dX \to \subsets(\dX)$. The set $\Omega \cap \Delta(X)$ is a residual subset of $\Delta(X)$.  For $j \in \{0,\ldots, d_M-1\}$, the set $\cont_{M,j} \defeq \cont \cap \dX_{M,j}$ is the set of points of continuity of the map $\orb_{T^{\vec m}}: \dX_{M,j} \to \subsets(\dX_{M,j})$ and $\cont_{M,j} \cap \Delta(X_{M,j})$ is a residual subset of $\Delta(X_{M,j})$.
\end{proposition}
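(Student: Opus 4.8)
The plan is to establish the three assertions in the order \emph{middle}, then \emph{last}, then \emph{first}. Throughout write $\orb = \orb_{T^{\vec m}}$, and recall from \cref{thm:subsystemsareminimal} that $\dX_{M,0}, \ldots, \dX_{M,d_M-1}$ are clopen, partition $\dX$, and each carries $T^{\vec m}$ as a homeomorphism; in particular $T^{\vec m}\dX_{M,j} \subseteq \dX_{M,j}$.

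First I would prove the middle assertion. Because $\dX_{M,j}$ is closed and $T^{\vec m}$-invariant, for $\vec x \in \dX_{M,j}$ the orbit closure $\orb(\vec x)$ is contained in $\dX_{M,j}$; and because $\dX_{M,j}$ is moreover \emph{clopen} in $\dX$, the family $\{F \in \subsets(\dX) \ | \ F \subseteq \dX_{M,j}\}$ is a clopen subset of $\subsets(\dX)$ which, with the Hausdorff metric, is naturally identified (isometrically) with $\subsets(\dX_{M,j})$. Thus the restriction of $\orb: \dX \to \subsets(\dX)$ to $\dX_{M,j}$ is literally the orbit closure map of the subsystem $(\dX_{M,j}, T^{\vec m})$; and since $\dX_{M,j}$ is open in $\dX$ and continuity at a point is a local property, a point $\vec x \in \dX_{M,j}$ lies in $\cont$ if and only if $\orb: \dX_{M,j} \to \subsets(\dX_{M,j})$ is continuous at $\vec x$. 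This is exactly the middle assertion.

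The main work is the last assertion. The key point is that $(\dX_{M,j}, T^{\vec m})$ is an \emph{invertible} system, so \cref{lem:fortvariant} applies to it and yields a countable family $\{B_i\}_{i\in\N}$ of closed, $T^{\vec m}$-invariant subsets of $\dX_{M,j}$, each with empty interior in $\dX_{M,j}$, such that $\dX_{M,j} \setminus \cont_{M,j} \subseteq \bigcup_i B_i$ (using the middle assertion to identify $\dX_{M,j}\setminus\cont_{M,j}$ with the discontinuity set of $\orb$ on $\dX_{M,j}$). I would then show each $B_i \cap \Delta(X_{M,j})$ has empty interior in $\Delta(X_{M,j})$: if not, then — using that $\Delta$ restricts to a homeomorphism $X_{M,j} \to \Delta(X_{M,j})$ — there is a non-empty open $U \subseteq X_{M,j}$ with $\Delta(U) \subseteq B_i$, whence, by closedness and $T^{\vec m}$-invariance of $B_i$, $\orb(\Delta(U)) = X_U^\Delta \subseteq B_i$; but \cref{lem:nonemptyinteriortwo} asserts $X_U^\Delta$ has non-empty interior in $\dX_{M,j}$, contradicting $B_i$'s having empty interior there. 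Hence each $B_i \cap \Delta(X_{M,j})$ is nowhere dense in $\Delta(X_{M,j})$, making $\Delta(X_{M,j}) \setminus \cont_{M,j}$ meager in the Baire space $\Delta(X_{M,j})$; equivalently, $\cont_{M,j} \cap \Delta(X_{M,j})$ is residual in $\Delta(X_{M,j})$.

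Finally, the first assertion follows by noting that $\Delta(X_{M,j}) = \Delta(X) \cap \dX_{M,j}$ is clopen in $\Delta(X)$ (as $\dX_{M,j}$ is clopen in $\dX$), so $\Delta(X)$ is the topological disjoint union of the $\Delta(X_{M,j})$, $0 \leq j \leq d_M-1$; since $\cont \cap \Delta(X)$ meets each summand $\Delta(X_{M,j})$ in exactly $\cont_{M,j} \cap \Delta(X_{M,j})$, which is residual there by the previous paragraph, and since a set residual in each piece of a clopen partition is residual in the union, $\cont \cap \Delta(X)$ is residual in $\Delta(X)$. I expect the one genuinely delicate step to be the bridge from ``residual in $\dX_{M,j}$'' (which \cref{lem:fortvariant} gives directly) to ``residual in the possibly nowhere-dense subset $\Delta(X_{M,j})$'': one cannot simply intersect a residual set with the diagonal, and this is precisely the role played by \cref{lem:nonemptyinteriortwo}, which guarantees that orbit closures of open subsets of the diagonal are topologically large in $\dX_{M,j}$.
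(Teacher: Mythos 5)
Your proof is correct and uses the same two key ingredients as the paper (the Fort-type lemma, \cref{lem:fortvariant}, and the non-empty-interior lemma, \cref{lem:nonemptyinteriortwo}), but it reorganizes the deductions. The paper proves the first assertion first by applying \cref{lem:fortvariant} once to the whole system $(\dX, T^{\vec m})$, then derives the second and third assertions as quick corollaries: the second from the observation that $\dX_{M,j}$ is open in $\dX$ so continuity localizes, and the third from the fact that $\Delta(X_{M,j})$ is open in $\Delta(X)$, so residual sets restrict to residual sets. You instead prove the second assertion first (with a cleaner justification than the paper, which glosses over the codomain mismatch that your clopen-identification $\subsets(\dX_{M,j}) \hookrightarrow \subsets(\dX)$ handles explicitly), then apply \cref{lem:fortvariant} to each invertible subsystem $(\dX_{M,j}, T^{\vec m})$ separately to get the third assertion directly, and finally sum over the finite clopen partition $\Delta(X) = \bigsqcup_j \Delta(X_{M,j})$ to recover the first. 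Your route has the small virtue of invoking \cref{lem:nonemptyinteriortwo} exactly as it is stated (for $U \subseteq X_{M,j}$), whereas the paper's proof implicitly localizes a general open $U \subseteq X$ to one $X_{M,j}$ before that lemma can be used. Your closing comment correctly identifies the genuine content of the proposition: passing from residuality in $\dX_{M,j}$ to residuality in the thin subset $\Delta(X_{M,j})$, which is what \cref{lem:nonemptyinteriortwo} makes possible.
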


\begin{proof}
Let $\disc = \dX \setminus \Omega$ be the set of points of discontinuity of the map $\orb_{T^{\vec m}}: \dX \to \subsets(\dX)$.  By \cref{lem:fortvariant}, there exists a countable family $\{B_i\}_{i \in \N}$ of closed, $T^{\vec m}$-invariant, empty-interior subsets of $\dX$ for which $\Xi \subseteq \bigcup_i B_i$. We claim that each $B_i \cap \Delta(X)$ is a closed set with empty interior in $\Delta(X)$.  It is closed because $\Delta(X)$ is closed.  Suppose for a contradiction that $U \subseteq X$ is open and is such that $\Delta(U) \subseteq B_i$. Since $B_i$ is $T^{\vec m}$-invariant and closed, $X_U^\Delta \subseteq B_i$. It follows by \cref{lem:nonemptyinteriortwo} that $B_i$ has non-empty interior, a contradiction.

Note that $\Delta(X) \cap \Xi \subseteq \bigcup_i (B_i \cap \Delta(X))$ is a cover of $\Delta(X) \cap \Xi$ with closed sets with empty interior, meaning $\Delta(X) \cap \Xi$ is a meager subset of $\Delta(X)$. Since $\Delta(X)= \big(\Delta(X) \cap \Omega \big) \cup \big(\Delta(X) \cap \Xi \big)$, the set $\Delta(X) \cap \Omega$ is a residual subset of $\Delta(X)$.

Let $j \in \{0,\ldots, d_M-1\}$. Since $\orb_{T^{\vec m}}: \dX_{M,j} \to \subsets(\dX_{M,j})$ is the restriction of the map $\orb_{T^{\vec m}}: \dX \to \subsets(\dX)$ to $\dX_{M,j}$, the set of its points of continuity is $\cont_{M,j} = \cont \cap \dX_{M,j}$. Since $\Delta(X_{M,j})$ is an open subset of $\Delta(X)$ and $\Omega$ is residual in $\Delta(X)$, the set $\cont \cap \Delta(X_{M,j}) = \cont_{M,j} \cap \Delta(X_{M,j})$ is a residual subset of $\Delta(X_{M,j})$.
\end{proof}

\section{Results on minimal systems} \label{sec:proofofmainthm}

We begin this section by demonstrating the equivalence between \cref{thm:dvdw,thm:dvdwreform}, dynamical formulations of van der Waerden's theorem from the Introduction.

\begin{proof}[Proof of equivalence between \cref{thm:dvdw,thm:dvdwreform}]

A point $x\in X$ is called \emph{$k$-recurrent} if for all $\varepsilon>0$ there exists $n\in\N$ such that $\max_{i=1,\ldots,k} d(x, T^{in}x)<\varepsilon$, and it is called \emph{multiply recurrent} (cf. \cite[page 9]{furstenberg-book}) if it is $k$-recurrent for all positive integers $k$. Let $E_k$ and $E$ denote the set of $k$-recurrent and multiply recurrent points in $X$, respectively, and note that $E = \bigcap_{k \in \N} E_k$.  We will show that \cref{thm:dvdw,thm:dvdwreform} are both equivalent to the fact that $E$ is a residual subset of $X$.

First we will show that \cref{thm:dvdw} implies that $E$ is a residual subset of $X$. Let $k \in \N$, and for an open set $U \subseteq X$, let
\[U' \defeq \bigcup_{n \in \N} \big( U \cap T^{-n}U \cap \cdots \cap T^{-kn} U \big).\]
It follows from \cref{thm:dvdw} that $U'$ is an open, dense subset of $U$. (The set $U'$ is open by definition and non-empty by \cref{thm:dvdw}. To see that it is a dense subset of $U$, let $V \subseteq U$ be non-empty and open, and note that $U' \cap V \supseteq U' \cap V' = V'$, which is non-empty by \cref{thm:dvdw}.) Since $X$ is compact, for all $\ell \in \N$, there exists a finite open cover $X = U_{\ell, 1} \cup U_{\ell, 2} \cup \cdots \cup U_{\ell, m_\ell}$ where each $U_{\ell,i}$ has diameter less than $1/\ell$. Since each $U_{\ell, i}'$ is open and dense in $U_{\ell,i}$, the set $U_\ell' \defeq \bigcup_{i=1}^{m_\ell} U_{\ell,i}'$ is an open, dense subset of $X$. We claim that $E_k = \bigcap_{\ell \in \N} U_\ell'$. It will follow that $E_k$ is a residual subset of $X$, and since $E = \bigcap_{k \in \N} E_k$, it will finally follow that $E$ is a residual subset of $X$.

To see that $E_k = \bigcap_{\ell \in \N} U_\ell'$, let $x \in E_k$ and $\ell \in \N$, and let $i \in \{1, \ldots, m_\ell\}$ be such that $x \in U_{\ell,i}$. Let $\eps > 0$ be sufficiently small so that $B(x,\eps) \subseteq U_{\ell,i}$. Since $x \in E_k$, there exists $n \in \N$ for which $x$, $T^nx$, \dots, $T^{kn}x \in B(x, \eps) \subseteq U_{\ell,i}$, which implies that $x \in U_{\ell,i}' \subseteq U_\ell'$. To see the reverse inclusion, suppose $x \in U_\ell'$ for all $\ell \in \N$. To see that $x \in E_k$, let $\eps > 0$, and let $\ell > 1/ \eps$. There exists $i \in \{1, \ldots, m_\ell\}$ such that $x \in U_{\ell,i}'$, meaning that there exists $n \in \N$ such that $x$, $T^n x$, \dots, $T^{kn}x \in U_{\ell,i}$. Since the diameter of $U_{\ell,i}$ is less than $1/\ell$, it follows that $\max_{i=1,\ldots,k} d(x, T^{in}x)<1/ \ell < \varepsilon$. Since $\varepsilon$ was arbitrary, this shows that $x \in E_k$.

To see that $E$ being a residual subset of $X$ implies \cref{thm:dvdw}, let $U \subseteq X$ be non-empty, open and let $k \in \N$. Since $E$ is a dense subset of $X$, there exists $x \in E \cap U$. Let $\eps > 0$ such that $B(x,\eps) \subseteq U$. Since $x$ is $k$-recurrent, there exists $n \in \N$ such that $x$, $T^nx$, \dots, $T^{nk}x \in B(x,\eps) \subseteq U$, meaning that $U \cap T^{-n} U \cap \cdots \cap T^{-kn} U \neq \emptyset$.

Next we will demonstrate the equivalence of \cref{thm:dvdwreform} and $E$ being a residual subset of $X$. First, let us prove \cref{thm:dvdwreform} assuming that $E$ is residual. We will show that every $x \in E$ is such that for any open set $U\subseteq X$ containing $x$, the set $R(x,U)$ satisfies $\dtstar(R(x,U)) = 1$. Let $U \subseteq X$ be an open set containing $x$. To show that $\dtstar(R(x,U)) = 1$, we will show that for all finite $F \subseteq \N$, there exists $n \in \N$ such that $nF \subseteq R(x,U)$. Let $k\in \N$ be such that $F\subseteq \{1,\ldots, k\}$, and let $\varepsilon>0$ be such that $B(x,\varepsilon)\subseteq U$. Since $x$ is multiply recurrent, there exists $n\in \N$ such that for all $i \in \{1,\ldots,k\}$, $d(x, T^{in}x)<\varepsilon$. This implies that $\{n,2n,\ldots,kn\}\subseteq R(x,B(x,\varepsilon))\subseteq R(x,U)$, whereby $nF\subseteq R(x,U)$, as desired.

Finally, let us prove that \cref{thm:dvdwreform} implies that $E$ is residual. Let $X'\subseteq X$ be the residual set guaranteed by \cref{thm:dvdwreform}. We will show that $X'\subseteq E$, which will prove that $E$ is residual. Let $x\in X'$. Let $\eps>0$, and set $U\coloneqq B(x,\eps)$. Since $\dtstar(R(x,U)) = 1$, for all finite $F \subseteq \N$, there exists $n \in \N$ such that $nF \subseteq R(x,U)$. In particular, for any $k\in \N$, there exists $n \in \N$ such that $\{n,2n,\ldots,nk\} \subseteq R(x,U)$, which implies that $\max_{i=1,\ldots,k}d(x,T^{in}x)<\eps$. Since $\eps > 0$ and $k \in \N$ were arbitrary, this shows that $x$ is a multiply recurrent point.
\end{proof}

Let $\xt$ be a topological dynamical system, $x \in X$, and $U \subseteq X$ be non-empty, open. Finding a configuration of the form $\{n,nm,nm^2\}$ in $R(x,U)$ is equivalent to showing that the $T \times T^{m} \times T^{m^2}$-orbit closure of $(x,x,x)$ in $X^3$ has non-empty intersection with $U\times U \times U$. This observation motivates the approach we use in the proof of \cref{thm:mainthm}.

The first step in the proof is to show that for any $\ell \in \N$ and $\vec m \in \N^\ell$, the $T^{\vec m}$-orbit closure of many points $\Delta(x) = (x,\ldots,x)$ along the diagonal in $\dX$ supports a measure $\dnux$ whose marginals on $X$ give mass $\eta > 0$ to $U$.  The second step is to use \cref{lem:joels-lemma} to find a mean with respect to which the point $\Delta(x)$ is $\dnux$-generic. Because each coordinate of $\Delta(x)$ spends an $\eta$-proportion of time in $U$ under $T^{\vec m}$, there must be many times for which an $\eta$-proportion of the coordinates are simultaneously in $U$.  Szemer\'edi's theorem then allows us to finish the argument by taking $\vec m$ to be a sufficiently long geometric progression.

\begin{proposition}\label{thm:measureonorbitstwo}
Let $\xt$ be an invertible, minimal dynamical system, and let $U \subseteq X$ be open, non-empty. There exists $\eta > 0$ such that for all $\eps > 0$, there exists $N \in \N$ such that for all finite sets $F \subseteq S_N$, there exists an $\eps$-dense subset $X_\eps \subseteq X$ such that for all $x \in X_\eps$, there exists $F' \subseteq F$ with $|F'| > \eta |F|$ and $n \in \N$ such that $nF' \subseteq R(x,U)$.
\end{proposition}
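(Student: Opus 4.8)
The plan is to translate the desired geometric configuration into a recurrence statement along the diagonal of $X^\ell$, to produce for residually many $x$ a $T^{\vec m}$-invariant measure on the appropriate orbit closure whose coordinate marginals charge $U$, and then to convert this into a mean via \cref{lem:joels-lemma} and finish by pigeonholing as in \cref{lem:meanpigeon}. Fix a finite $F\subseteq S_N$, write $\vec m=(m_1<\dots<m_\ell)$ for its increasing enumeration, let $\pi_i\colon X^\ell\to X$ be the $i$-th coordinate projection, and recall $\Delta\colon X\to X^\ell$, $T^{\vec m}$ and $\dX$ from \cref{sec:dynamicsondiag}. Unwinding definitions, a point $x$ satisfies the conclusion for $F$ as soon as there is $n\in\N$ with $\#\{\, i : T^{m_i n}x\in U\,\}>\eta\ell$, for one may then take $F'=\{\,m_i : T^{m_i n}x\in U\,\}$; equivalently, $(T^{\vec m})^n\Delta(x)$ must lie in the open set $V\defeq\{\vec y\in X^\ell : \#\{i:y_i\in U\}>\eta\ell\}$.

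To fix the constants, I would choose a non-empty open $U'$ with $\overline{U'}\subseteq U$ and set
\[ \eta\defeq\tfrac14\,\inf\bigl\{\,\rho(U')\ :\ \rho\in\mathcal{M}_T(X)\,\bigr\}, \]
where $\mathcal{M}_T(X)$ denotes the weak-$\ast$ compact, non-empty set of $T$-invariant Borel probability measures on $X$. Since $\rho\mapsto\rho(U')$ is lower semicontinuous, this infimum is attained, necessarily at a measure of full support because $\xt$ is minimal; thus $\eta>0$, and crucially $\eta$ depends only on $U$ and the system. Given $\eps>0$, I take $N$ to be a common multiple of the integer supplied by \cref{lem:totalminimalatresolutionU} for $U'$ and of the integer supplied by \cref{thm:epsdenseorbits} for $\eps$, so that for any $F\subseteq S_N$ every $m_i$ is coprime to both, whence $U'\cap X_{m_i,j}\neq\emptyset$ for every relevant $j$ and every $T^{m_i}$-orbit is $\eps$-dense.

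The heart of the matter is the following claim: \emph{for a residual set of $x\in X$ there is a $T^{\vec m}$-invariant Borel probability measure $\dnux$ supported on $Y_x\defeq\orb_{T^{\vec m}}(\Delta(x))$ with $\tfrac1\ell\sum_{i=1}^\ell\pi_i\dnux(\overline{U'})\ge 2\eta$} (using the paper's convention that $S\mu$ is the push-forward of a measure $\mu$ under a map $S$). To prove it, I would first produce an auxiliary measure on $\dX$ and then push it down to a single orbit closure. Fix $\rho\in\mathcal{M}_T(X)$, let $\mu_0$ be the push-forward of $\rho$ under the diagonal injection, and let $\hat\rho$ be a weak-$\ast$ limit point of $\bigl\{\tfrac1K\sum_{n=0}^{K-1}(T^{\vec m})^n\mu_0\bigr\}_{K\in\N}$, a probability measure on $\dX$. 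Because $\rho$ is $T$-invariant, $\mu_0$ is $\Delta(T)$-invariant, so $\hat\rho$ is invariant under both $\Delta(T)$ and $T^{\vec m}$; since $(\dX,\Delta(T),T^{\vec m})$ is minimal by \cref{thm:glasnersystemisminimal}, $\hat\rho$ has full support in $\dX$. Since $\rho$ is also $T^{m_i}$-invariant, the push-forward of $(T^{\vec m})^n\mu_0$ under $\pi_i$ equals $\rho$ for all $n$, hence $\pi_i\hat\rho=\rho$ for every $i$, and therefore $\tfrac1\ell\sum_i\pi_i\hat\rho(\overline{U'})=\rho(\overline{U'})\ge\rho(U')\ge 4\eta$. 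Decomposing $\hat\rho$ into $T^{\vec m}$-ergodic components, a positive-mass set of components $\sigma$ satisfies $\tfrac1\ell\sum_i\pi_i\sigma(\overline{U'})\ge 2\eta$. Using the $\Delta(T)$-invariance of $\hat\rho$ — the only translation-like structure available — together with the minimality of $(\dX,\Delta(T),T^{\vec m})$ and the residual set of continuity points of $\orb_{T^{\vec m}}$ along $\Delta(X)$ provided by \cref{prop:pointsofcontinuityalongthediagonal}, one shows that for residually many $x$ one can pick generic points $\vec w_k\to\Delta(x)$ for such good components (or for suitable $\Delta(T)$-translates thereof) so that the orbit closures $\orb_{T^{\vec m}}(\vec w_k)$ Hausdorff-converge to $Y_x$; passing to a weak-$\ast$ limit $\dnux$ of the corresponding ergodic measures then gives a $T^{\vec m}$-invariant measure supported on $Y_x$ by \cref{lem:hausconvergence}, whose average marginal mass on $\overline{U'}$ remains at least $2\eta$ by lower semicontinuity of $\sigma\mapsto\pi_i\sigma(\overline{U'})$ at closed sets. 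This passage from the ambient measure $\hat\rho$ to a measure living on one orbit closure $Y_x$, keeping the marginal mass under control, is the step I expect to be the main obstacle.

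Granting the claim, fix $x$ in the resulting residual set $X_\eps$; being residual in $X$, it is dense, hence $\eps$-dense, as required. Since $\orb_{T^{\vec m}}(\Delta(x))=Y_x$ and $\dnux$ is $T^{\vec m}$-invariant on $Y_x$, \cref{lem:joels-lemma} — applied to the system $(Y_x,T^{\vec m})$, the measure $\dnux$, and the point $\Delta(x)$ — yields an additively invariant mean $\lambda$ on $\N$ with $\lambda(n\mapsto f((T^{\vec m})^n\Delta(x)))=\int_{Y_x}f\,d\dnux$ for all $f\in C(Y_x)$. For each $i$, Urysohn's lemma provides $\phi\in C(X)$ with $0\le\phi\le 1$, $\phi\equiv 1$ on $\overline{U'}$, and $\phi\equiv 0$ off $U$; set $\psi_i(\vec y)=\phi(y_i)$, so that $\one_{\overline{U'}}(y_i)\le\psi_i(\vec y)\le\one_U(y_i)$. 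Writing $A_i\defeq R_T(x,U)/m_i=\{\,n:T^{m_i n}x\in U\,\}$, we have $\one_{A_i}(n)=\one_U(\pi_i((T^{\vec m})^n\Delta(x)))\ge\psi_i((T^{\vec m})^n\Delta(x))$ for every $n$, hence $\lambda(A_i)\ge\int_{Y_x}\psi_i\,d\dnux\ge\pi_i\dnux(\overline{U'})$. Averaging over $i$ and invoking the claim gives $\lambda\bigl(n\mapsto\tfrac1\ell\#\{i:n\in A_i\}\bigr)\ge 2\eta>\eta$, so by positivity of $\lambda$ there is $n\in\N$ with $\tfrac1\ell\#\{i:n\in A_i\}>\eta$. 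Then $F'\defeq\{\,m_i:n\in A_i\,\}$ satisfies $|F'|>\eta\ell=\eta|F|$ and $nF'\subseteq R_T(x,U)$, which is what we wanted.
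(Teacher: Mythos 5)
Your broad strategy is the paper's: pass to the $T^{\vec m}$-dynamics on the orbit closure of the diagonal, manufacture a $T^{\vec m}$-invariant measure supported on $\orb_{T^{\vec m}}(\Delta(x))$ whose coordinate marginals charge a precompact subset of $U$, convert this into an additively invariant mean via \cref{lem:joels-lemma}, and finish by pigeonhole. Two incidental deviations are fine: you use the $T^{\vec m}$-ergodic decomposition of $\hat\rho$ where the paper disintegrates $\dmu$ over the (countably generated) $\sigma$-algebra pulled back through the Borel map $\orb_{T^{\vec m}}$, and you track the average $\tfrac1\ell\sum_i\pi_i\dnux(\overline{U'})$ while the paper pigeonholes early for a set $I$ of indices with each $\pi_i\dmu_{\vec w}(\overline V)>\sqrt\eta$ (hence the $\eta=(\mu(V)/2)^2$ normalisation there versus your $\eta=\tfrac14\inf\rho(U')$).

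The step you flag as the main obstacle is, however, a genuine gap, and the paper closes it by a different device than the one you sketch. Your claim asserts the existence of $\dnux$ for a \emph{residual} set of $x$, and the sketch rests on taking ``suitable $\Delta(T)$-translates'' of a good ergodic component and using upper semicontinuity of $\sigma\mapsto\sigma(\overline{U'})$ on closed sets. The trouble is that $\pi_i(\Delta(T)^b\sigma)=T^{b}\pi_i\sigma$, and $\pi_i\sigma$ is only $T^{m_i}$-invariant, not $T$-invariant; so the marginal mass on $\overline{U'}$ is preserved only when $b$ is a multiple of $M=\lcm(\vec m)$, i.e.\ when the translate stays inside the same clopen piece $\dX_{M,j}$. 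Consequently the argument can only reach $\Delta(x)$ for $x$ lying in the \emph{one} piece $X_{M,j_0}$ containing the good point $\vec w$, not in a residual subset of $X$ (which meets every $X_{M,j}$). The paper does not attempt a residual set here at all. It first passes to a subsequence with $b_n\equiv b\pmod M$, replaces $x$ by $T^{M-b}x$ (still a continuity point, since \cref{prop:pointsofcontinuityalongthediagonal} makes the continuity set $\Delta(T)$-invariant) so that $b_n\in M\N$, and thereby obtains $\dnux$ and the mean $\lambda$ for a \emph{single} $x$. It then sets $X_\eps=\{T^{Mk}x\ |\ k\in\N\}$, which is $\eps$-dense by \cref{thm:epsdenseorbits} because $M\in S_N$, and propagates the conclusion to each $T^{Mk}x$ by the \emph{additive translation invariance of $\lambda$}: since $M/m_i\in\N$ for each $i\in I$, one has $\lambda\big(R(T^{Mk}x,U)/m_i\big)=\lambda\big(R(x,U)/m_i-(M/m_i)k\big)>\sqrt\eta$, and then \cref{lem:meanpigeon} finishes. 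Your proposal never uses the invariance of $\lambda$, which is precisely the tool that makes the residual-set ambition unnecessary. (Alternatively, your claim can be salvaged along the orbit $\{T^{Mk}x\}$ by noting that $\dnu_{T^{Mk}x}:=\Delta(T)^{Mk}\dnux$ is $T^{\vec m}$-invariant, supported on $\orb_{T^{\vec m}}(\Delta(T^{Mk}x))$, and has the same marginals since $m_i\mid M$; this $\eps$-dense orbit suffices and you should replace ``residual'' by it.)

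One small terminological slip: for a fixed closed set $K$, the map $\nu\mapsto\nu(K)$ on probability measures is \emph{upper}, not lower, semicontinuous in the weak-$\ast$ topology; the Portmanteau inequality $\nu(K)\geq\limsup_k\nu_k(K)$ is what you actually use, and it does give what you want.
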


\begin{proof}
Let $V \subseteq X$ open, non-empty such that $\overline{V} \subseteq U$, and let $\mu$ be any $T$-invariant probability measure on $X$. Since $T$ is minimal, $\eta \defeq (\mu(V)/2)^2 > 0$. Let $\eps > 0$, and let $N \in \N$ be as given in \cref{thm:epsdenseorbits}.

Let $F = \{m_1, \ldots, m_\ell\} \subseteq S_N$, and put $\vec m  = (m_1,\ldots,m_\ell) \in S_N^\ell$. Let
\[\dX \defeq \orb_{T^{\vec m}}\big(\diagX \big).\]

Let $\dmu$ be any weak-$\ast$ limit point of the set $\big\{ N^{-1} \sum_{n=0}^{N-1} (T^{\vec m})^n \Delta(\mu) \ \big| \ N \in \N \big\}$, where $\Delta(\mu)$ denotes the push-forward of $\mu$ under the map $\Delta$. It follows that $\dmu$ is a $T^{\vec m}$-invariant probability measure on $\dX$ with marginals $\pi_i \dmu = \mu$, where $\pi_i: \dX \to X$ is the projection onto the $i^{\text{th}}$ coordinate.

By \cref{lem:orbislsc}, the map $\overline{o}_{T^{\vec m}}: \dX \to \subsets(\dX)$ is lower semicontinuous, hence Borel. Denote by $\mathcal{B}_{\dX}$ and $\mathcal{B}_{\subsets(\dX)}$ the Borel $\sigma$-algebras of $\dX$ and $\subsets(\dX)$, respectively. Let $\mathcal{A}$ be the pull-back of $\mathcal{B}_{\subsets(\dX)}$ through $\overline{o}_{T^{\vec m}}$. Because $\overline{o}_{T^{\vec m}}$ is Borel, $\mathcal{A}$ is a sub-$\sigma$-algebra of $\mathcal{B}_{\dX}$, and because $\mathcal{B}_{\subsets(\dX)}$ is countably generated, so too is $\mathcal{A}$. Note that $\atom(\vec x) \defeq \orb_{T^{\vec m}}^{-1}\big(\{\orb_{T^{\vec m}}(\vec x)\} \big) \in \mathcal{A}$ is the atom of $\mathcal{A}$ containing $\vec x \in \dX$.

Disintegrating $\dmu$ with respect to $\mathcal{A}$ (see, e.g., \cite[Theorem 5.14]{EW11}), there exists a $\dmu$-co-null set $\dX_0\subseteq\dX$ and, for each $\vec x\in\dX_0$, a Borel probability measure $\dmu_{\vec x}$ supported on $\atom(\vec x) \subseteq \orb_{T^{\vec m}}(\vec x)$ such that $\dmu_{\vec x}=\dmu_{\vec y}$ whenever $\atom(\vec x)=\atom(\vec y)$ and such that
\[\dmu = \int_{\dX_0} \dmu_{\vec x} \ d \dmu (\vec x).\]
By the essential uniqueness of this disintegration, the $T^{\vec m}$-invariance of $\dmu$, and the fact that $T^{\vec m} \atom(\vec x) = \atom(T^{\vec m} \vec x)$, it follows that $T^{\vec m}\dmu_{\vec x} = \dmu_{T^{\vec m}\vec x}$. Note that if $\vec x$ is a $T^{\vec m}$-recurrent point, then $\orb_{T^{\vec m}}(\vec x) = \orb_{T^{\vec m}}(T^{\vec m}\vec x)$. For such points, $\atom(\vec x) = \atom(T^{\vec m} \vec x)$, whereby $T^{\vec m} \dmu_{\vec x} = \dmu_{\vec x}$, meaning $\dmu_{\vec x}$ is $T^{\vec m}$-invariant. By \cite[Proposition 4.2.2]{brinstuckbook}, $\dmu$-almost every point $\vec x \in \dX$ is $T^{\vec m}$-recurrent, so by passing to a $\dmu$-co-null subset of $\dX_0$, we may assume that for all $\vec x \in \dX_0$, the measure $\dmu_{\vec x}$ is $T^{\vec m}$-invariant.

For $i \in \{1, \ldots, \ell\}$, let
\[\dX_i = \big\{ \vec x \in \dX_0  \ \big| \  \pi_i \dmu_{\vec x}(\overline{V}) > \sqrt{\eta} \big\}.\]
Since
\[2 \sqrt{\eta} < \mu(\overline{V}) = \pi_i \dmu(\overline{V}) = \int_{\dX_0} \pi_i \dmu_{\vec x}(\overline{V}) \ d \dmu(\vec x),\]
we have by Chebyshev's inequality that $\dmu (\dX_i) > \sqrt{\eta}$. By the pigeonhole principle (with reasoning similar to that in the proof of \cref{lem:meanpigeon}), there exists $I \subseteq \{1, \ldots, \ell\}$ with $|I| > \sqrt{\eta} \ell$ for which $\dX_0 \cap \bigcap_{i \in I} \dX_i \neq \emptyset$. Let $\vec w$ be an element of this set.

Put $M = \lcm(\vec m)$. By \cref{prop:pointsofcontinuityalongthediagonal}, there exists a point $x \in X$ for which $\vec x \defeq \Delta(x)$ is a point of continuity of the map $\orb_{T^{\vec m}}$. By \cref{thm:glasnersystemisminimal}, there exists a sequence $\big((a_n,b_n) \big)_{n \in \N} \subseteq \N^2$ for which $(T^{\vec m})^{a_n} \Delta(T)^{b_n}\vec w \to \vec x$ as $n \to \infty$. By passing to a subsequence if necessary, there exists $0 \leq b \leq M-1$ such that for all $n \in \N$, $b_n \equiv b \pmod M$. By \cref{prop:pointsofcontinuityalongthediagonal}, the points of continuity of the map $\orb_{T^{\vec m}}$ are $\Delta(T)$-invariant. Therefore, by replacing $x$ with $T^{M-b}x$ and $b_n$ with $b_n + M - b$, we may assume that $b=0$, that is, that $(b_n)_{n \in \N} \subseteq M\N$. 
	
Let $\dnu_x$ be a weak-$\ast$ limit point of the set $\big\{\Delta(T)^{b_n} \dmu_{\vec w} \ \big| \ n \in \N \big\}$; by passing to a subsequence, we may assume without loss of generality that $\Delta(T)^{b_n} \dmu_{\vec w} \to \dnu_x$ as $n \to \infty$. We will show now that $\dnu_x$ is a $T^{\vec m}$-invariant probability measure supported on $x^\Delta \defeq \orb_{T^{\vec m}}\big(\Delta(x) \big)$ such that for all $i \in I$, $\pi_i \dnu_x (\overline{V}) > \sqrt{\eta}$.

That $\dnu_x$ is a probability measure is immediate from its definition. The measure $\dnu_x$ is $T^{\vec m}$-invariant because $\Delta(T)$ and $T^{\vec m}$ commute, are continuous, and $\dmu_{\vec w}$ is $T^{\vec m}$-invariant. Since $(T^{\vec m})^{a_n}\Delta(T)^{b_n} \vec w \to \vec x$ and $\vec x$ is a point of continuity of $\overline{o}_{T^{\vec m}}$, $\overline{o}_{T^{\vec m}}((T^{\vec m})^{a_n}\Delta(T)^{b_n}\vec w) \to x^\Delta$ as $n \to \infty$. Combined with the fact that $\supp ((T^{\vec m})^{a_n}\Delta(T)^{b_n}\dmu_{\vec w}) \subseteq \overline{o}_{T^{\vec m}}((T^{\vec m})^{a_n}\Delta(T)^{b_n}\vec w)$, \cref{lem:hausconvergence} gives that the measure $\dnu_x$ is supported on $x^\Delta$.

Let $i \in I$. Because $\dmu_{\vec w}$ is $T^{\vec m}$-invariant, the measure $\pi_i \dmu_{\vec w}$ is $T^{m_i}$-invariant, and hence, for all $n \in \N$, $T^{b_n}$-invariant. By properties of weak-$\ast$ convergence of measures and this invariance,
\begin{align}\label{eqn:measureboundedfrombelow}
\begin{aligned}
\pi_i \dnu_x (\overline{V}) &\geq \pi_i \limsup_{n \to \infty} (T^{\vec m})^{a_n}\Delta(T)^{b_n} \dmu_{\vec w} (\overline V)\\
&= \limsup_{n \to \infty} T^{b_n} \pi_i \dmu_{\vec w}(\overline V)\\
&= \pi_i \dmu_{\vec w}(\overline V) > \sqrt{\eta}.\end{aligned}
\end{align}

Applied to the system $(\dx,T^{\vec m},\dnu_x)$, \cref{lem:joels-lemma} gives the existence of an additively invariant mean $\lambda$ on $\N$ such that for all $g \in C(\dx)$,
\[ \lambda \big(n \mapsto g(T^{m_1 n}x, T^{m_2 n}x,\ldots,T^{m_\ell n}x) \big) = \int_{\dx} g \ d\dnu_x.\]
By Urysohn's lemma, there exists a continuous function $f:X\to[0,1]$ with $f(y)=1$ for all $y\in \overline{V}$ and $f(y)=0$ for all $y\notin U$.  By (\ref{eqn:measureboundedfrombelow}), for all $i \in I$,
\begin{align*}
\lambda \big(R(x,U)/m_i\big) &\geq \lambda \big( n \mapsto f(T^{m_i n}x)\big)\\
&= \lambda \big( n \mapsto ( f \circ \pi_i)(T^{m_1 n}x, T^{m_2 n}x,\ldots,T^{m_\ell n}x)\big)\\
&= \int_{\dx} f \circ \pi_i \ d\dnu_x\\
&\geq \pi_i\dnu_x(\overline{V}) > \sqrt{\eta}.
\end{align*}

Put $X_\eps = \{T^{Mk} x \ | \ k \in \N\}$. Since $M \in S_N$, by \cref{thm:epsdenseorbits}, the set $X_\eps$ is $\eps$-dense in $X$. Therefore, to conclude the proof, it suffices to show that for every $k \in \N$, there exists $F' \subseteq F$ with $|F'| > \eta |F|$ and $n \in \N$ such that $nF' \subseteq R(T^{Mk}x,U)$.

Let $k \in \N$. By the translation invariance of $\lambda$, for all $i \in I$,
\[\lambda \left(\frac{R(T^{Mk}x,U)}{m_i} \right)  = \lambda \left( \frac{R(x,U) - Mk}{m_i} \right) = \lambda \left(\frac{R(x,U)}{m_i} - \frac{M}{m_i}k\right) > \sqrt{\eta}.\]
It follows by \cref{lem:meanpigeon} that there exists $I' \subseteq I$ with $|I'| > \sqrt{\eta} |I|$ and $n \in \N$ such that for all $i \in I'$, $m_i n \in R(T^{Mk}x,U)$. Setting $F' = \{m_i \ | \ i \in I'\}$, we see $|F'| > \eta |F|$ and $nF' \subseteq R(T^{Mk}x,U)$, as was to be shown.

\end{proof}

Now we can prove \cref{thm:mainthm}, restated here.

\begin{mainthm*}
Let $\xt$ be a minimal dynamical system. There exists a residual set $X' \subseteq X$ such that for all $x \in X'$ and all non-empty, open $U \subseteq X$, the set $R(x,U)$ contains arbitrarily long geometric progressions.
\end{mainthm*}

\begin{proof}
It suffices to prove the statement for $\xt$ invertible. To see why, let $\pi: \wt \to \xt$ be the natural extension (\cref{def:naturalext}). Suppose that \cref{thm:mainthm} holds for $\wt$: there exists a residual set $W' \subseteq W$ such that for all $w \in W'$ and all non-empty, open $V \subseteq X$, the set $R(w,V)$ contains arbitrarily long geometric progressions. By \cref{lem:imageofresidual,lem:naturalextensionisminimal}, the set $X' \defeq \pi W'$ is residual. Let $x \in X'$ and $U \subseteq X$ open, non-empty. Choose $w \in \pi^{-1}(\{x\}) \cap W'$, and note that $R(w,\pi^{-1} U) = R(x,U)$. It follows that $R(x,U)$ contains arbitrarily long geometric progressions.

By taking a countable basis of open sets and a countable intersection of residual sets, the residual set $X'$ is allowed to depend on the set $U$. Let $U \subseteq X$ be open, non-empty. Let $\eta > 0$ be as guaranteed by \cref{thm:measureonorbitstwo} for the set $U$.

For $\ell \in \N$, put
\[G_\ell \defeq \bigcup_{\substack{m,n \in \N \\ m \geq 2}} \big(T^{-nm} U \cap \cdots \cap T^{-nm^\ell} U \big).\]
This is precisely the set of points $x \in X$ for which $R(x,U)$ contains a geometric progression of length $\ell$. The set of those points $x \in X$ for which $R(x,U)$ contains arbitrarily long geometric progressions is thus $X'\defeq \bigcap_{\ell \in \N} G_\ell$. We will show that $X'$ is residual by showing that each $G_\ell$ is open and dense in $X$. Since $G_\ell$ is open by definition, we have only to show that $G_\ell$ is $\eps$-dense in $X$ for any $\eps > 0$.

Fix $\ell \in \N$ and $\eps > 0$. Let $N \in \N$ be as guaranteed by \cref{thm:measureonorbitstwo}. By Szemer\'edi's theorem \cite{szemeredi} and the argument in the proof of \cref{thm:multszem}, there exists $L \in \N$ such that any subset of any geometric progression of length $L$ of relative density at least $\eta$ contains a geometric progression of length $\ell$. Let $F \subseteq S_N$ be a geometric progression of length $L$. Let $X_\eps \subseteq X$ be as guaranteed by \cref{thm:measureonorbitstwo}. To show that $G_\ell$ is $\eps$-dense, it suffices now to show that $X_\eps \subseteq G_\ell$.

Let $x \in X_\eps$. By \cref{thm:measureonorbitstwo}, there exists $F' \subseteq F$ with $|F'| > \eta |F|$ and $n \in \N$ such that $nF' \subseteq R(x,U)$. By Szemer\'edi's theorem, the set $F'$ contains a geometric progression of length $\ell$, hence so does $nF'$. Because $R(x,U)$ contains a geometric progression of length $\ell$, the point $x$ belongs to $G_\ell$. It follows that $X_\eps \subseteq G_\ell$, as was to be shown.
\end{proof}

\section{Results on totally minimal and distal systems}\label{sec:proofofdistalresults}

To prove \cref{thm:mainthmwithtotallymin,thm:mainthmwithdistal}, we need the following strengthening of \cref{thm:measureonorbitstwo} using total visibility; recall \cref{def:totallyvisible}.

\begin{proposition}\label{prop:existenceofmean}
Let $\xt$ be an invertible, minimal system and $V \subseteq U \subseteq X$ be open, non-empty sets with $\overline{V} \subseteq U$. If $V$ is totally visible, then there exists $\eta > 0$ and a residual set $X' \subseteq X$ such that for all $x \in X'$ and all finite sets $F \subseteq \N$, there exists $F' \subseteq F$ with $|F'| > \eta |F|$ and an additively invariant mean $\lambda$ on $\N$ such that for all $f \in F'$, $\lambda \big(R(x,U)/f \big) > \eta$.
\end{proposition}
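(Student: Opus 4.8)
The plan is to run the argument behind \cref{thm:measureonorbitstwo}, but one rational Kronecker component at a time --- this is where total visibility enters --- and to extract an additively invariant mean in place of a single return time. Fix a $T$-invariant probability measure $\mu$ witnessing the total visibility of $V$ (\cref{def:totallyvisible}), set $c \defeq \inf_{n,i \in \N} \mu_{n,i}(V) > 0$, and let $\eta \defeq (c/3)^2$. We will show that the conclusion holds with this $\eta$ and with the residual set $X' \defeq \bigcap_F X_F'$, the intersection taken over the countably many nonempty finite subsets $F \subseteq \N$, where $X_F'$ is the residual set constructed below for each individual $F$.

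Fix a nonempty finite set $F = \{m_1, \ldots, m_\ell\} \subseteq \N$, write $\vec m = (m_1, \ldots, m_\ell)$ and $M = \lcm(\vec m)$, and recall from \cref{sec:ratkronanddistallemma} the clopen partition $X = X_{M,0} \cup \cdots \cup X_{M,d_M - 1}$ into minimal $T^M$-components. Fix $j \in \{0,\ldots,d_M-1\}$ and work inside the minimal system $\big(\dX_{M,j}, \Delta(T)^M, T^{\vec m}\big)$ of \cref{thm:subsystemsareminimal}. Let $\dmu$ be a weak-$\ast$ limit point of the averages $N^{-1}\sum_{n=0}^{N-1}(T^{\vec m})^n\Delta(\mu_{M,j})$; this is a $T^{\vec m}$-invariant probability measure on $\dX_{M,j}$ each of whose coordinate marginals $\pi_i\dmu$ is a convex combination of measures of the form $\mu_{M,j'}$, so total visibility forces $\pi_i\dmu(\overline V) \geq c$ for all $i$. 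Disintegrating $\dmu$ over the $\sigma$-algebra generated by the Borel map $\orb_{T^{\vec m}}$ (\cref{lem:orbislsc}), the same reasoning as in the proof of \cref{thm:measureonorbitstwo} shows that for $\dmu$-almost every $\vec x$ the fiber measure $\dmu_{\vec x}$ is $T^{\vec m}$-invariant and supported on $\atom(\vec x) \subseteq \orb_{T^{\vec m}}(\vec x)$. Applying Chebyshev's inequality to $\vec x \mapsto \pi_i\dmu_{\vec x}(\overline V)$ yields $\dmu\big(\{\vec x : \pi_i\dmu_{\vec x}(\overline V) > \sqrt\eta\}\big) \geq c - \sqrt\eta > \sqrt\eta$ for every $i$; pigeonholing over $i$ (as in \cref{lem:meanpigeon}) then produces a subset $I_j \subseteq \{1,\ldots,\ell\}$ with $|I_j| > \sqrt\eta\,\ell$ together with a point $\vec w_j$ --- which we also take in the co-null set where the disintegration behaves --- satisfying $\pi_i\dmu_{\vec w_j}(\overline V) > \sqrt\eta$ for all $i \in I_j$.

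Crucially, $I_j$ and $\vec w_j$ do not depend on any particular point of $X_{M,j}$. Let $X_{F,j}'$ be the set of $x \in X_{M,j}$ for which $\Delta(x)$ is a point of continuity of $\orb_{T^{\vec m}}: \dX_{M,j} \to \subsets(\dX_{M,j})$; it is residual in $X_{M,j}$ by \cref{prop:pointsofcontinuityalongthediagonal}. Fix $x \in X_{F,j}'$. By minimality of $\big(\dX_{M,j}, \Delta(T)^M, T^{\vec m}\big)$ there exist $(a_n,b_n) \in \N_0^2$ with $(T^{\vec m})^{a_n}(\Delta(T)^M)^{b_n}\vec w_j \to \Delta(x)$, and after passing to a subsequence we may assume $(\Delta(T)^M)^{b_n}\dmu_{\vec w_j} \to \dnux$ weak-$\ast$. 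Just as in the proof of \cref{thm:measureonorbitstwo} --- using \cref{lem:hausconvergence} and continuity of $\orb_{T^{\vec m}}$ at $\Delta(x)$ for the support, and the $T^M$-invariance of each $\pi_i\dmu_{\vec w_j}$ (valid since $m_i \mid M$) for the marginals --- the measure $\dnux$ is $T^{\vec m}$-invariant, supported on $\dx \defeq \orb_{T^{\vec m}}(\Delta(x))$, and satisfies $\pi_i\dnux(\overline V) > \sqrt\eta$ for all $i \in I_j$. Applying \cref{lem:joels-lemma} to $(\dx, T^{\vec m}, \dnux)$ with the point $\Delta(x)$ gives an additively invariant mean $\lambda$ on $\N$ with $\lambda\big(n \mapsto g((T^{\vec m})^n\Delta(x))\big) = \int g \, d\dnux$ for all $g \in C(\dx)$. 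Fixing a Urysohn function $f: X \to [0,1]$ with $\one_{\overline V} \leq f \leq \one_U$ and taking $g = f \circ \pi_i$, we obtain for every $i \in I_j$
\[\lambda\big(R(x,U)/m_i\big) \;\geq\; \lambda\big(n \mapsto f(T^{m_i n}x)\big) \;=\; \int_X f \, d(\pi_i\dnux) \;\geq\; \pi_i\dnux(\overline V) \;>\; \sqrt\eta \;\geq\; \eta.\]
So $F' \defeq \{m_i : i \in I_j\} \subseteq F$ has $|F'| > \sqrt\eta\,\ell \geq \eta|F|$, and $\lambda$ is the required mean.

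Since $X = X_{M,0} \cup \cdots \cup X_{M,d_M-1}$ is a clopen partition and each $X_{F,j}'$ is residual in $X_{M,j}$, the set $X_F' \defeq \bigcup_{j=0}^{d_M-1} X_{F,j}'$ is residual in $X$, and by the previous paragraph every $x \in X_F'$ satisfies the conclusion for $F$; the desired $X'$ is then the intersection of the $X_F'$ over all nonempty finite $F$. The only step where this argument genuinely departs from the proof of \cref{thm:measureonorbitstwo} --- and the main obstacle --- is the component-by-component construction above: for an arbitrary finite $F$ there can be local obstructions that confine the $T^{\vec m}$-orbit of $\Delta(x)$ to a single component $X_{M,j}$, so one cannot get by with an invariant measure that merely charges $V$ somewhere; one needs it to charge $V$ on every component $X_{M,j'}$, and this is precisely the content of total visibility that makes the marginal bound $\pi_i\dmu(\overline V) \geq c$ available.
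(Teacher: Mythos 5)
Your proposal is correct and follows essentially the same route as the paper: decompose along the rational Kronecker components $X_{M,j}$, use total visibility to get the marginal bound $\pi_i\dmu(\overline V)$ uniformly over components, disintegrate over the $\orb_{T^{\vec m}}$ $\sigma$-algebra, pigeonhole to find $I_j$ and $\vec w_j$, transport the fiber measure to a continuity point on the diagonal via minimality of $(\dX_{M,j}, \Delta(T)^M, T^{\vec m})$, and invoke \cref{lem:joels-lemma}. The only cosmetic difference is the choice of constant: the paper sets $\eta$ to be one third of the visibility infimum and replaces $\sqrt\eta$ by $\eta$ throughout the repeated argument, whereas you keep the $\sqrt\eta$ bookkeeping of \cref{thm:measureonorbitstwo} and take $\eta = (c/3)^2$, which is a slightly worse but equally valid constant.
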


\begin{proof}
Let $\mu$ be a $T$-invariant probability measure on $X$ for which $V$ is totally visible. Let $\eta > 0$ be a third of the infimum from \cref{def:totallyvisible}. By taking a countable intersection of residual sets, it suffices to show: \emph{for all finite sets $F \subseteq \N$, there exists a residual set $X' \subseteq X$ such that for all $x \in X'$, there exists $F' \subseteq F$ with $|F'| > \eta |F|$ and an additively invariant mean $\lambda$ on $\N$ such that for all $f \in F'$, $\lambda \big(R(x,U)/f \big) > \eta$.}

Let $F = \{m_1, \ldots, m_\ell\} \subseteq \N$, and put $\vec m  = (m_1,\ldots,m_\ell) \in \N^\ell$ and $M = \lcm(\vec m)$. Let $X' \subseteq X$ be the set of points $x \in X$ such that $\Delta(x)$ is a point of continuity of the map $\orb_{T^{\vec m}}: \dX \to \subsets(\dX)$. By \cref{prop:pointsofcontinuityalongthediagonal}, $X'$ is a residual subset of $X$. By the same proposition, for $j \in \{0, \ldots, d_{M}-1\}$, the set $X_{M,j}' \defeq X' \cap X_{M,j}$ is the set of points $x \in X_{M,j}$ for which $\Delta(x)$ is a point of continuity of the map $\orb_{T^{\vec m}}: \dX_{M,j} \to \subsets(\dX_{M,j})$, and $X_{M,j}'$ is residual in $X_{M,j}$.

Fix $j \in \{0, \ldots, d_{M}-1\}$. The measure $\mu_{M,j}$ is supported on $X_{M,j}$ and, because $V$ is totally visible by $\mu$, satisfies: for all $i \in \{1,\ldots,\ell\}$,
\begin{align}\label{eqn:liminfiscorrect}\liminf_{N \to \infty} \frac 1N \sum_{k=0}^{N-1}T^{m_ik}\mu_{M,j}(\overline{V}) = \liminf_{N \to \infty} \frac 1N \sum_{k=0}^{N-1}\mu_{M,j + m_ik}(\overline{V}) > 2 \eta.\end{align}
Let $\dmu_{M,j}$ be any weak-$\ast$ limit point of the set $\big\{ N^{-1} \sum_{n=0}^{N-1} (T^{\vec m})^n \Delta(\mu_{M,j}) \ \big| \ N \in \N \big\}$. It follows that $\dmu_{M,j}$ is a $T^{\vec m}$-invariant probability measure on $\dX_{M,j}$, and (\ref{eqn:liminfiscorrect}) implies that for all $i \in \{1,\ldots, \ell\}$, $\pi_i \dmu_{M,j} (\overline{V}) > 2 \eta$, where $\pi_i: \dX \to X$ is the projection onto the $i^{\text{th}}$ coordinate.

At this point, we repeat verbatim Paragraphs~3 through 5 of the proof of \cref{thm:measureonorbitstwo} with $\dX_{M,j}$ in place of $\dX$, $\dmu_{M,j}$ in place of $\dmu$, and with $\eta$ in place of $\sqrt{\eta}$. We get $I \subseteq \{1, \ldots, \ell\}$ with $|I| > \eta \ell$, $\vec w \in \dX_{M,j}$, and a $T^{\vec m}$-invariant probability measure\footnote{Perhaps a more fitting notation for this measure would be $\dmu_{M,j,\vec w}$, which we avoid for notational simplicity.} $\dmu_{\vec w}$ supported on $\orb_{T^{\vec m}}(\vec w)$ with the property that for all $i \in I$, $\pi_i \dmu_{\vec w}(\overline{V}) > \eta$.

Let $x \in X_{M,j}'$, and write $\vec x \defeq \Delta(x)$. By \cref{thm:subsystemsareminimal}, the system $(\dX_{M,j},T^{\vec m},\allowbreak \Delta(T)^M)$ is minimal, so there exists a sequence $\big((a_n,b_n) \big)_{n \in \N} \subseteq \N^2$ for which $(T^{\vec m})^{a_n} \Delta(T)^{b_n M}\vec w \to \vec x$ as $n \to \infty$.

Let $\dnu_x$ be a weak-$\ast$ limit point of the set $\big\{\Delta(T)^{b_n M} \dmu_{\vec w} \ \big| \ n \in \N \big\}$; by passing to a subsequence, we may assume without loss of generality that $\Delta(T)^{b_n M} \dmu_{\vec w} \to \dnu_x$ as $n \to \infty$. By repeating verbatim Paragraphs~8 and 9 in the proof of \cref{thm:measureonorbitstwo}, we show that $\dnu_x$ is a $T^{\vec m}$-invariant probability measure supported on $\orb_{T^{\vec m}}\big(\Delta(x) \big)$ such that for all $i \in I$, $\pi_i \dnu_x (\overline{V}) > \eta$. An application of \cref{lem:joels-lemma} just as in Paragraph~10 in the proof of \cref{thm:measureonorbitstwo} gives the existence of an additively invariant mean $\lambda$ on $\N$ such that for all $f \in F' \defeq \{m_i \ | \ i \in I\}$, $\lambda \big(R(x,U)/f \big) > \eta$, as was to be shown.
\end{proof}

It was explained in \cref{rmk:totallyminimalsystems} that all non-empty, open sets in a totally minimal system are totally visible by every $T$-invariant probability measure. We use this fact to prove \cref{thm:mainthmwithtotallymin}.

\begin{proof}[Proof of \cref{thm:mainthmwithtotallymin}]
By the same initial argument in the proof of \cref{thm:mainthm}, it suffices to prove this theorem in the case that $\xt$ is invertible.

By taking a countable basis of open sets and a countable intersection of residual sets, the residual set $X'$ is allowed to depend on the set $U$. Let $U \subseteq X$ be open, non-empty, and let $V \subseteq U$ be open, non-empty such that $\overline{V} \subseteq U$.  Since $\xt$ is totally minimal, the set $V$ is totally visible. Let $\eta > 0$ and $X' \subseteq X$ be as guaranteed by \cref{prop:existenceofmean}.  We will show that for all $x \in X'$, the set $R(x,U)$ satisfies $\dtstar(R(x,U)) \geq \eta^2$.

Let $x \in X'$ and $F \subseteq \N$ be finite. Let $F' \subseteq F$ and $\lambda$ be as guaranteed by \cref{prop:existenceofmean}: $|F'| > \eta |F|$, and for all $f \in F'$, $\lambda \big(R(x,U)/f \big) > \eta$.  By \cref{lem:meanpigeon}, there exists $F'' \subseteq F'$ with $|F''| > \eta |F'| > \eta^2 |F|$ and $n \in \N$ such that $nF'' \subseteq R(x,U)$. Since $F \subseteq \N$ was arbitrary, this shows $\dtstar(R(x,U)) \geq \eta^2$.
\end{proof}

The following lemma gives a sufficient condition on a set $A \subseteq \N$ for all of its translates to have positive multiplicative density in a coset of a multiplicative semigroup, and it will allow us to prove \cref{thm:mainthmwithdistal,thm:translatesofiprstararelarge}.

\begin{lemma}\label{lem:gprichtranslates}
Let $A \subseteq \N$, and suppose that there exists $\eta > 0$ and $N \in \N$ for which the following holds:
\begin{align}\label{eqn:propforrichtranslations} \begin{gathered} \text{for all $F \subseteq \N$ and $a \in \Z$, there exists $F' \subseteq F$ with $|F'| > \eta |F|$}\\ \text{and a translation invariant mean $\lambda$ on $\N$ such that} \\ \text{for all $f \in F'$, $\lambda \left( \frac{A-aN}{Nf} \right) > \eta$.}\end{gathered}\end{align}
Then, for all $t \in \Z$,
\[\dstar_{(N,t) S_{N/(N,t)}}\big(A+t \big) \geq \eta^2 (N,t) / N.\]
\end{lemma}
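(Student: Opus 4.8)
The plan is to unwind the definition of multiplicative upper Banach density (\cref{def:multdensity}) directly. Write $g = (N,t)$, $k = N/g$, and $t = gt'$; note $\gcd(t',k) = 1$ because $\gcd(N,t) = g$. Since $\dstar_{gS_k}(A+t) = \dstar_{S_k}\big((A+t)/g\big)$, and since every integer $\equiv t' \pmod k$ is coprime to $k$, the set $\{t' + kj : j \in A/N\}$ (where $A/N := \{j \in \Z : Nj \in A\}$) lies in $(A+t)/g \cap S_k$: indeed $gm - t = Nj$ when $m = t'+kj$, so $m \in (A+t)/g$ exactly when $j \in A/N$. Hence it suffices to show $\dstar_{S_k}\big(\{t'+kj : j \in A/N\}\big) \geq \eta^2/k$, i.e. that for every finite $\bar F \subseteq S_k$ there is $s \in S_k$ with $|s\bar F \cap \{t'+kj : j \in A/N\}| \geq (\eta^2/k)|\bar F|$. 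This uses only the part of $A$ in $N\Z$, which is precisely what the hypothesis \eqref{eqn:propforrichtranslations} sees, since $\frac{A-aN}{Nf} = \{m : mf + a \in A/N\}$.

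Next comes a residue-class reduction. For a dilate $s \in S_k$ and $\bar f \in \bar F$, one has $s\bar f \in \{t'+kj\}$ only when $\bar f$ lies in the single residue class $s^{-1}t' \pmod k$. Pigeonholing $\bar F$ among the $\phi(k) \le k$ residue classes mod $k$ coprime to $k$ and keeping the most populous one, say $\bar F_r := \{\bar f \in \bar F : \bar f \equiv r \pmod k\}$ with $|\bar F_r| \ge |\bar F|/k$, any useful dilate must satisfy $s \equiv t'r^{-1} \pmod k$. Writing $\bar f = r + k\mu$ for $\bar f \in \bar F_r$ and $M := \{(\bar f - r)/k : \bar f \in \bar F_r\}$, one computes $s\bar f - t' = k(s\mu + c_s)$ with $c_s := (sr - t')/k \in \Z$. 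So it is enough to find a dilate $s \equiv t'r^{-1} \pmod k$ for which $s\mu + c_s \in A/N$ holds for more than $\eta^2|M|$ values $\mu \in M$; as $|M| = |\bar F_r| \ge |\bar F|/k$, this delivers the required $\eta^2|M| \ge (\eta^2/k)|\bar F|$.

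Finally, one invokes the hypothesis. Applying \eqref{eqn:propforrichtranslations} to a carefully chosen auxiliary finite set $F$ built from $M$ (translated and dilated so as to force the scaling parameter that will emerge into the residue class $t'r^{-1} \pmod k$), together with an appropriate shift $a \in \Z$ playing the role of the target constant $c_s$, produces $F' \subseteq F$ with $|F'| > \eta|F|$ and a translation-invariant mean $\lambda$ with $\lambda\big(\tfrac{A-aN}{Nf}\big) > \eta$ for all $f \in F'$. By \cref{lem:meanpigeon} applied to the sets $\big\{m : N(mf+a) \in A\big\}$, $f \in F'$, there is a single $n \in \N$ with $nf + a \in A/N$ for more than $\eta^2|F|$ values $f \in F'$. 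Unwinding the affine change of variables relating $f$ to $\mu$ and the scaling $n$ to the dilate $s$ then yields $s$ and the sought-after at least $\eta^2|M|$ good values $\mu \in M$.

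The main obstacle is precisely this last synchronization: the mean-pigeonhole argument hands back a scaling parameter $n$ over which one has no direct control, whereas the dilate $s$ we need must lie in the prescribed residue class $t'r^{-1} \pmod k$ (and in $S_k$) and comes attached to the forced shift $c_s = (sr-t')/k$. Reconciling these is what makes the choice of the auxiliary set $F$ and of $a$ in the hypothesis delicate — one cannot simply take $F = M$ and $a = 0$ — and it is also where the factor $(N,t)/N = 1/k$ enters the bound, through the residue-class pigeonhole.
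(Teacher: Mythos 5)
Your proposal correctly reduces to estimating $\dstar_{S_K}\bigl((A+t)/(N,t)\bigr)$ with $K = N/(N,t)$, correctly identifies that a residue-class pigeonhole modulo $K$ is what produces the $1/K$ loss, and correctly observes that \cref{lem:meanpigeon} hands back a scaling parameter $n$ over which you have no a priori control. But the proposal halts at exactly the step that makes the lemma true: you write that an ``auxiliary finite set $F$'' and an ``appropriate shift $a$'' must be chosen so that the emerging scaling parameter lands in a prescribed residue class mod $K$, and then acknowledge you do not see how to arrange this. That synchronization \emph{is} the lemma, and nothing in the sketch produces it. In fact the stated target is slightly off: one should not try to force $n$ into a residue class at all, since the pigeonhole gives no handle on $n$ whatsoever.

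Here is the construction you are missing. Having passed to $F' \subseteq F$ with all elements congruent to some $f_0 \pmod K$, use that $\Pi F'$, the product of the elements of $F'$, is coprime to $K$ to solve the Bezout equation $b\,\Pi F' - aK = t/(N,t)$ with $(b,K)=1$, and set $c = b f_0^{|F'|-1}$, so $(c,K)=1$. Because $b\,\Pi F'/f \equiv c \pmod K$ for every $f \in F'$, one obtains the simultaneous congruence $c(N,t)f - aN \equiv t \pmod{fN}$ for all $f \in F'$. This is what makes the translation invariance of $\lambda$ do the work: for each $f$ in the subset $F''$ returned by the hypothesis applied to $(F',a)$, the sets $(A-aN)/(Nf)$ and $(A+t-c(N,t)f)/(Nf)$ differ by a translation by a multiple of $Nf$, hence $\lambda\bigl((A+t-c(N,t)f)/(Nf)\bigr) > \eta$ as well. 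Applying \cref{lem:meanpigeon} to these shifted sets then yields a single $n$ with $(Nn + c(N,t))f \in A+t$ for more than $\eta^2|F|/K$ values of $f$, and the dilate $Kn + c$ lies in $S_K$ \emph{automatically}, for every $n$, because $c$ is coprime to $K$. The residue-class restriction is achieved not by steering $n$ but by designing the dilate to have the form $Kn + c$; without this (or an equivalent) device, your change of variables to the set $M$ and the shift $c_s$ cannot be reconciled with the uncontrolled $n$, and the argument does not close.
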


\begin{proof}
Let $t \in \Z$, and put $K=N/(N,t)$. We will show that $\dstar_{S_K}\big((A+t)/(N,t)\big) \geq \eta^2 / K$.

Let $F \subseteq S_K$ be finite, and let $F' \subseteq F$ with $|F'| \geq |F| / K$ be such that all elements of $F'$ are congruent modulo $K$ to some $f_0 \in S_K$. Denote by $\Pi F'$ the product of the elements of $F'$. Since $(\Pi F',K) = 1$, there exists $a \in \Z$ and $b \in \N$ with $(b,K) = (t / (N,t),K) = 1$ so that
\[b \Pi F' - aK = \frac t{(N,t)}.\]
Put $c = bf_0^{|F'| - 1}$ and note that $(c,K) = 1$. Let $f \in F'$. Since $b \Pi F' / f \equiv c \pmod {K}$, $b\Pi F' \equiv c f \pmod {fK}$. Therefore,
\begin{align}
\notag c f - aK &\equiv \frac t{(N,t)} \pmod {fK}, \text{ whereby}\\
\label{eqn:mainequivalence} c (N,t) f - aN &\equiv t \pmod {fN}.
\end{align}
Summarizing, we have found $a \in \Z$ and $c \in \N$ with $(c,K) = 1$ so that for all $f \in F'$, the congruence in (\ref{eqn:mainequivalence}) holds.

By the assumptions in (\ref{eqn:propforrichtranslations}), there exists $F'' \subseteq F'$ with $|F''| > \eta|F'|$ and a translation invariant mean $\lambda$ on $\N$ such that for all $f \in F''$, $\lambda \left( (A-aN)/(Nf) \right) > \eta$. For all $f \in F''$, $-aN \equiv t-c (N,t)f \pmod{Nf}$, so by the translation invariance of $\lambda$,
\[\lambda \left( \frac{A+t-c(N,t)f}{Nf} \right) = \lambda \left( \frac{A-aN}{Nf} \right) > \eta.\]

By \cref{lem:meanpigeon}, there exists $F''' \subseteq F''$ with $|F'''| > \eta |F''|$ such that $\bigcap_{f \in F'''} (A+t-c(N,t)f)/(Nf)$ is non-empty; let $n$ be an element of this set. Now $Nn + c(N,t) \in \N$ is such that $(Nn+c(N,t))F''' \subseteq A+t$, meaning that
\[(Kn+c)F''' \subseteq \frac{A+t}{(N,t)}.\]
Since $(Kn+c,K) = (c,K) = 1$, we have shown that $Kn+c \in S_K$ satisfies $|(Kn+c)F \cap (A+t)/(N,t)| \geq |F'''| > \eta^2 |F| / K$. Since $F \subseteq S_K$ was arbitrary, this shows $\dstar_{S_K}\big((A+t)/(N,t)\big) \geq \eta^2 / K$.
\end{proof}

We are now able to prove the following theorem, a strengthening of \cref{thm:mainthmwithdistal}.

\begin{theorem}\label{thm:mainthmwithdistalbetter}
Let $\xt$ be a minimal distal system. There exists a residual set $X' \subseteq X$ such that for all non-empty, open $U \subseteq X$, there exists $N \in \N$ and $\eta > 0$ such that for all $x \in X'$, there exists $i \in \N$ such that for all $t \in \Z$,
\[d_{(N,t)S_{N/(N,t)}}^* \big( R(x,U)+i+t \big) \geq \eta (N,t) / N.\]
In particular, putting $t=-i$, we see that the set $R(x,U)$ has positive multiplicative density in a coset of a multiplicative subsemigroup of $\N$.
\end{theorem}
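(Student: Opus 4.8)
The plan is to chain together \cref{lem:distalhavegoodmeasure}, \cref{prop:existenceofmean}, and \cref{lem:gprichtranslates}, using the index $i$ to translate $x$ into a clopen piece on which total visibility is available. Distal systems are invertible, so no natural-extension reduction is needed. I would first fix a countable basis $\{U_k\}_{k\in\N}$ of non-empty open subsets of $X$: it suffices to construct, for each $k$, a residual set $\tilde X_k\subseteq X$ together with $N_k\in\N$ and $\eta_k>0$ satisfying the conclusion for $U=U_k$, and then set $X'\defeq\bigcap_k\tilde X_k$. Indeed, for an arbitrary non-empty open $U$ one picks $U_k\subseteq U$ from the basis and uses $R(x,U_k)\subseteq R(x,U)$ together with the monotonicity of $\dstar_{nS}$ under inclusion and its invariance under finite modification.

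Fixing $k$, I would choose a non-empty open $V_0$ with $\overline{V_0}\subseteq U_k$ and apply \cref{lem:distalhavegoodmeasure} to obtain $N_k,j_k\in\N$ such that $V_0\cap X_{N_k,j_k}$ is totally visible in the (invertible, minimal) system $(X_{N_k,j_k},T^{N_k})$. Since $X_{N_k,j_k}$ is clopen, $V_0\cap X_{N_k,j_k}$ and $U_k\cap X_{N_k,j_k}$ are non-empty open subsets of $X_{N_k,j_k}$ with $\overline{V_0\cap X_{N_k,j_k}}\subseteq U_k\cap X_{N_k,j_k}$, so \cref{prop:existenceofmean} applies to this subsystem and furnishes $\eta_k>0$ and a residual set $X'_{N_k,j_k}\subseteq X_{N_k,j_k}$ with the property that for every $z\in X'_{N_k,j_k}$ and every finite $F\subseteq\N$ there exist $F'\subseteq F$ with $|F'|>\eta_k|F|$ and an additively invariant mean $\lambda$ on $\N$ such that $\lambda\big(R_{T^{N_k}}(z,U_k\cap X_{N_k,j_k})/f\big)>\eta_k$ for all $f\in F'$. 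Because $d_{N_k}\mid N_k$, for $z\in X_{N_k,j_k}$ one has $R_{T^{N_k}}(z,U_k\cap X_{N_k,j_k})=R_T(z,U_k)/N_k$, so this reads $\lambda\big(R_T(z,U_k)/(N_kf)\big)>\eta_k$.

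Next I would pass to the set $Y_k\defeq\bigcap_{a\in\Z}(T^{N_k})^{-a}X'_{N_k,j_k}\subseteq X_{N_k,j_k}$, which is residual in $X_{N_k,j_k}$ (a countable intersection of homeomorphic images of $X'_{N_k,j_k}$), is $T^{N_k}$-invariant, and is contained in $X'_{N_k,j_k}$. The point of this step is that for every $z\in Y_k$ the set $A\defeq R_T(z,U_k)$ satisfies hypothesis \eqref{eqn:propforrichtranslations} of \cref{lem:gprichtranslates} with $N=N_k$ and $\eta=\eta_k$: given finite $F\subseteq\N$ and $a\in\Z$, apply the conclusion of the previous paragraph to the point $T^{aN_k}z\in Y_k\subseteq X'_{N_k,j_k}$, using that $R_T(T^{aN_k}z,U_k)$ and $A-aN_k$ differ only on a finite set, on which $\lambda$ vanishes. \cref{lem:gprichtranslates} then yields $\dstar_{(N_k,t)S_{N_k/(N_k,t)}}(R_T(z,U_k)+t)\geq\eta_k^2(N_k,t)/N_k$ for every $t\in\Z$.

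Finally I would put $\tilde X_k\defeq\bigcup_{l=0}^{d_{N_k}-1}T^lY_k$; this meets each of the $d_{N_k}$ clopen pieces $X_{N_k,j}$ in a residual subset of it, hence is residual in $X$. For $x\in\tilde X_k$, let $l$ be the index with $T^{-l}x\in Y_k$, set $i\defeq l+N_k\in\N$, and set $z\defeq T^{-i}x=T^{-N_k}(T^{-l}x)\in Y_k$. Then $R(x,U_k)+i$ agrees with $R_T(z,U_k)$ up to a finite set, so for every $t\in\Z$,
\[\dstar_{(N_k,t)S_{N_k/(N_k,t)}}\big(R(x,U)+i+t\big)\ \geq\ \dstar_{(N_k,t)S_{N_k/(N_k,t)}}\big(R_T(z,U_k)+t\big)\ \geq\ \eta_k^2(N_k,t)/N_k,\]
which proves the claim with $N=N_k$ and $\eta=\eta_k^2$; specializing $t=-i$ gives the ``in particular'' assertion. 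The analytic weight is carried by the three cited results; the only point that needs care is the role of $i$, namely that geometric progressions and multiplicative density are not translation invariant, so $i$ must be chosen precisely to slide $x$ into the clopen subsystem where \cref{lem:distalhavegoodmeasure} provides visibility, with all remaining translation bookkeeping absorbed by \cref{lem:gprichtranslates}. That non-invariance -- already isolated in \cref{lem:gprichtranslates} -- is the one genuine obstacle.
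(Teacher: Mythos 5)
Your proof is correct and follows essentially the same route as the paper: obtain a totally visible piece $V_{N,j}$ via \cref{lem:distalhavegoodmeasure}, feed the subsystem $(X_{N,j},T^N)$ into \cref{prop:existenceofmean}, pass to a two-sided $T^N$-invariant residual subset so that the hypothesis \eqref{eqn:propforrichtranslations} of \cref{lem:gprichtranslates} can be verified for all $a\in\Z$, and finally sweep out the remaining clopen pieces to assemble $X'$. Your only deviations are cosmetic but welcome refinements: you explicitly account for the finite discrepancies between $R_T(T^{aN}z,U_k)$ and $A-aN_k$ (the paper glosses this by equating them, which is fine since a translation-invariant mean kills finite sets), and you shift $i$ by $N_k$ to guarantee $i\in\N$, whereas the paper's $\bigcup_{i=0}^{d_N-1}T^iX'_{N,j}$ technically allows $i=0\notin\N$ (trivially fixable, as you note implicitly, using $T^N$-invariance of the inner set).
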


\begin{proof}
Because $\xt$ is distal, it is invertible.  By taking a countable basis of open sets and a countable intersection of residual sets, the residual set $X'$ is allowed to depend on the set $U$. Let $U \subseteq X$ be open, non-empty, and let $V \subseteq U$ be open, non-empty with $\overline{V} \subseteq U$. By \cref{lem:distalhavegoodmeasure}, there exists $N \in \N$ and $j \in \{0,\ldots,d_N-1\}$ such that the set $V_{N,j}$ is totally visible in the system $(X_{N,j},T^N)$.

Now $(X_{N,j},T^N)$ is an invertible, minimal system, $\overline{V_{N,j}} \subseteq U_{N,j}$, and $V_{N,j}$ is totally visible. Let $\sigma > 0$ be the ``$\eta$'' as guaranteed by \cref{prop:existenceofmean}, and let $X_{N,j}' \subseteq X_{N,j}$ be as guaranteed by the same proposition. Put $\eta = \sigma^2$. Since $X_{N,j}'$ is residual, so is $\bigcap_{n \in \Z} T^{Nn} X_{N,j}'$; thus, replacing the former set with the latter, we may assume that $X_{N,j}'$ is $T^N$-invariant.

We will verify next that for every $x \in X_{N,j}'$, the set $R(x,U)$ satisfies the conditions in (\ref{eqn:propforrichtranslations}) in \cref{lem:gprichtranslates} with $\sigma$ as ``$\eta$'' and $N$ as it is. Let $x \in X_{N,j}'$ and put $A = R(x,U)$. Let $F \subseteq \N$ and $a \in \Z$.  Because $X_{N,j}'$ is $T^N$-invariant, $T^{aN}x \in X_{N,j}'$. By \cref{prop:existenceofmean}, there exists  $F' \subseteq F$ with $|F'| > \sigma |F|$ and an additively invariant mean $\lambda$ on $\N$ such that for all $f \in F'$,
\[\lambda\left( \frac{A-aN}{Nf} \right) = \lambda \left( \frac{R_{T^N}(T^{aN}x,U)}{f} \right) > \sigma.\]
This shows that the conditions in \cref{lem:gprichtranslates} are satisfied.

Put $X' = \bigcup_{i=0}^{d_N-1} T^i X_{N,j}'$.  Since $T$ is a homeomorphism, the set $X'$ is residual.  We will show that every $x \in X'$ satisfies the conclusions of the theorem.

Let $x \in X'$. By the definition of $X'$, there exists $i \in \N$ so that $T^{-i} x \in X_{N,j}'$. Since $R(T^{-i} x,U) = R(x,U) + i$ satisfies the conditions in \cref{lem:gprichtranslates}, for all $t \in \Z$,
\[\dstar_{(N,t)S_{N/(N,t)}}\big( R(x,U)+i+t \big) \geq \sigma^2 (N,t) / N = \eta (N,t) / N,\]
as was to be shown.
\end{proof}

\section{Results on \texorpdfstring{IP$_r^*$}{IPr*} sets and nilsystems}\label{sec:combresults}

Not every syndetic subset of $\N$ arises as the set of returns $R(x,U)$ in a dynamical system (see \cref{ex:setwithoutipnaughtstarshifts,ex:propertyofnilsystem} below), so \cref{thm:mainthm} lends only some evidence in favor of the conjecture that all syndetic subsets of $\N$ contain arbitrarily long geometric progressions. In this section, we show that a subclass of syndetic sets, translates of $\ipr^*$ sets, do have positive multiplicative density in cosets of multiplicative subsemigroups of $\N$ and, hence, are GP-rich.

\begin{definition}
A subset of $\N$ is called \emph{$\ipr$}, $r \in \N$, if it contains a \emph{finite sums set with $r$ generators}, a set of the form
\begin{align}\text{FS}(x_1, \ldots, x_r) \defeq \left\{ \sum_{i \in I} x_i \ \middle| \ \emptyset \neq I \subseteq \{1, \ldots, r\} \right\}, \quad x_1, \ldots, x_r \in \N.\end{align}
A set which is $\ip_r$ for all $r \in \N$ is called \emph{$\ip_0$}. A subset of $\N$ is called \emph{$\iprstar$} if it has non-empty intersection with every $\ipr$ set in $\N$, and it is called \emph{$\ipzstar$} if it has non-empty intersection with every $\ipzero$ set in $\N$ (equivalently, if it is $\iprstar$ for some $r \in \N$).  The \emph{rank} of an $\ipzstar$ set is the minimal $r \in \N$ for which it is an $\iprstar$ set.
\end{definition}

As a warm-up to the proof of \cref{thm:translatesofiprstararelarge}, we will show that every $\iprstar$ set $A$ is syndetic. Let $F \subseteq \N \setminus A$ be a maximal finite sums set, and put $F_0 = F \cup \{0\}$.  For every $m \in \N$, $F \cup (F_0+m)$ is a finite sums set that, by the maximality of $F$, has non-empty intersection with $A$. Since $A \cap F = \emptyset$, it must be that $A \cap (F_0+m) \neq \emptyset$, meaning $m \in A-F_0$. Since $m$ was arbitrary, $A-F_0 = \N$, meaning $A$ is syndetic: the set $F_0 + 1 \subseteq \N$ is such that $\bigcup_{f \in F_0+1} (A-f) = \N$.

In fact, we get a quantitative measure on the syndeticity of the set $A$.  Since $A$ is $\iprstar$, $F$ is $\ip_s$ for some $s \leq r-1$, meaning $|F_0+1| \leq 2^s$. It follows by additivity that for any additively invariant mean $\lambda$ on $\N$, $\lambda(A) \geq 2^{-s}$. This is the basis for applying \cref{lem:gprichtranslates} in the proof of the following theorem, from which \cref{thm:translatesofiprstararelarge} immediately follows.

\begin{theorem}\label{thm:translatesofiprstararelargebetter}
Let $A \subseteq \N$ be an $\text{IP}_r^*$ set. There exists an $N \in \N$ such that for all $t \in \Z$,
\begin{align}\label{eqn:inequalityfortranslatediprstarset}\dstar_{(N,t)S_{N/(N,t)}} \big( A+t \big) \geq \frac{(N,t)}{2^{2r+2}N}.\end{align}
\end{theorem}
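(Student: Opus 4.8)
The plan is to deduce the theorem from \cref{lem:gprichtranslates}. Set $\eta := 2^{-(r+1)}$, so that $\eta^2 (N,t)/N = (N,t)/(2^{2r+2}N)$ is exactly the right-hand side of (\ref{eqn:inequalityfortranslatediprstarset}); it therefore suffices to produce an $N \in \N$ for which $A$ satisfies hypothesis (\ref{eqn:propforrichtranslations}) of \cref{lem:gprichtranslates} with this $\eta$. Two features of $\iprstar$ sets will be used throughout. First, $\iprstar$-ness is preserved under division: $A/d$ is $\iprstar$ for every $d \in \N$, since $A/d$ meets $\text{FS}(x_1, \ldots, x_r)$ if and only if $A$ meets $\text{FS}(dx_1, \ldots, dx_r)$. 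Second, the warm-up computation preceding the theorem shows that $\lambda(A) \geq 2^{-s} \geq 2^{1-r}$ for every additively invariant mean $\lambda$ on $\N$, where $s \leq r-1$ is the number of generators of a maximal finite sums set contained in $\N \setminus A$; applying this to $A/d$ and using translation invariance, $\lambda((A/d) - x) = \lambda(A/d) \geq 2^{1-r}$ for all $d \in \N$ and $x \in \Z$.

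The sets occurring in (\ref{eqn:propforrichtranslations}) are $\frac{A - aN}{Nf} = \{m \in \N \ | \ N(mf+a) \in A\}$, i.e.\ the intersection of $A$ with the arithmetic progression $\{N(mf+a) \ | \ m \in \N\}$. The purpose of $N$ is to neutralize the congruence obstructions of $A$: although every $A/d$ is syndetic, $A$ itself can entirely miss a progression $c + D\Z$ with $\gcd(c,D) < D$ — e.g.\ $A = 2\N$ misses $1 + 2\Z$, and $A = \N \setminus (1 + D\Z)$ is $\text{IP}_2^*$ yet misses $1 + D\Z$ — and for $N$ coprime to $D$ this defeats (\ref{eqn:propforrichtranslations}). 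The saving grace is that these obstructions are controlled: the density bound $\lambda(A) \geq 2^{1-r}$ forces at most $r-1$ ``independent'' (pairwise coprime moduli) non-principal progressions inside $\N \setminus A$, and from this one can extract an $N$, depending only on $A$, such that $A/N$ misses no non-principal progression. Having fixed such an $N$, I would verify (\ref{eqn:propforrichtranslations}) as follows: given a finite $F \subseteq \N$ and $a \in \Z$, for each $f \in F$ replace $a$ by its residue modulo $f$ (which does not change $\lambda(\frac{A-aN}{Nf})$, by translation invariance of $\lambda$), set $g = \gcd(a,f)$, and factor it out, writing $\frac{A - aN}{Nf}$ as a translate of $\big((A/(Ng)) - \rho'\big)/f'$ with $\gcd(\rho',f') = 1$; since $N$ has absorbed the obstruction attached to this class, $A/(Ng)$ is present in $\rho' + f'\Z$, and one shows that $\frac{A - aN}{Nf}$ has mean exceeding $\eta$ against a single additively invariant mean $\lambda$ for at least an $\eta$-proportion of the $f \in F$; taking $F'$ to be this subset and $\lambda$ that mean establishes (\ref{eqn:propforrichtranslations}). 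The exponent $2r+2$ emerges from tracking powers of $2$: the maximal finite sums set in $\N \setminus A$ contributes $2^{s} \leq 2^{r-1}$, absorbing the obstructions and the internal pigeonholing cost a bounded amount more, and \cref{lem:gprichtranslates} squares the resulting density.

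The step I expect to be the main obstacle is precisely the one forced by the failure of translation invariance of multiplication: $\frac{A - aN}{Nf}$ is not a dilate of a fixed set, but genuinely depends on the residue of $a$ modulo $f$, so the single $N$ — chosen before $a$ and $F$ — must neutralize the obstructions of $A$ uniformly in $f$. Making precise that $A$'s congruence obstructions are effectively ``finitely generated'' (so that such an $N$ exists), and then bounding the set of $f \in F$ for which $A/(Ng)$ fails to be \emph{syndetically} present in the residue class $\rho' \pmod{f'}$ — the $\iprstar$ property guarantees non-emptiness there but not syndeticity — so that the good $f$ form more than an $\eta$-proportion of $F$, is where the $\iprstar$ hypothesis (its stability under division together with the quantitative syndeticity estimate $\lambda(A) \geq 2^{-s}$) has to be pushed hardest; this is the heart of the argument.
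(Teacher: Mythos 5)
Your high-level plan---reduce to \cref{lem:gprichtranslates}, and power the hypothesis with the stability of $\ipr^*$ under the division $A \mapsto A/d$ together with the quantitative syndeticity bound $\lambda(A) \geq 2^{-s}$---is the same as the paper's, and your example $\N \setminus (1+D\Z)$ correctly illustrates why $N$ must depend on $A$. But the core of the argument, the choice of $N$ and the verification of hypothesis \eqref{eqn:propforrichtranslations}, is left as a sketch, and you flag the hard step yourself. Moreover the mechanism you propose for choosing $N$ (``absorb the congruence obstructions,'' bounded by at most $r-1$ ``independent'' progressions of pairwise coprime moduli) is not made precise and is doubtful as stated: with pairwise coprime moduli $D_1,\ldots,D_k$ the union of the corresponding progressions has density $1 - \prod_{i}(1-1/D_i)$, which does not yield a uniform bound $k \leq r-1$ from $\lambda(A) \geq 2^{1-r}$ alone, since the product can stay away from $0$ for $k$ well beyond $r-1$ if the $D_i$ are large.

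The paper's device is cleaner and completely dissolves, rather than confronts, the obstacle you flagged: choose $N$ to \emph{minimize} $\rank(A/n)$ over $n \in \N$, and set $s := \rank(A/N) \leq r$. Since $\rank(A/m) \leq \rank(A/n)$ whenever $n \mid m$, minimality forces $\rank(A/(fN)) = s$ for \emph{every} $f \in \N$; this is exactly the uniformity in $f$ that your sketch is groping toward via the $g = \gcd(a,f)$ decomposition, but it comes for free. Given $f$ and $a$, one takes a maximal finite sums set $G' \subseteq \N \setminus (A/(fN))$, which is necessarily $\text{IP}_{s-1}$ by the minimality of $s$, and sets $G := fG' \cup \{0\}$, so $|G| \leq 2^s$. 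For any $m \in \N$, the set $(fG') \cup (G+m)$ is $\text{IP}_s$ and is disjoint from $A/N$ on the part $fG'$; since $\rank(A/N)=s$, this forces $(A/N) \cap (G+m) \neq \emptyset$, hence $A/N - G = \N$, and at most $2^s$ translates of $(A/N-a)/f = (A-aN)/(Nf)$ cover a cofinite subset of $\N$. Therefore $\lambda\bigl((A-aN)/(Nf)\bigr) \geq 2^{-s} > \eta := 2^{-(s+1)}$ holds for \emph{every} translation invariant mean $\lambda$ and \emph{every} $f \in \N$, so one simply takes $F' = F$: the pigeonholing over $f \in F$ that you proposed (keeping only an $\eta$-proportion) never arises. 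The exponent $2r+2$ then drops out of $\eta^2(N,t)/N = 2^{-2(s+1)}(N,t)/N \geq 2^{-(2r+2)}(N,t)/N$.
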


\begin{proof}
It is quick to check that for all $n \in \N$, $\rank(A/n) \leq \rank(A) \leq r$. It follows that there exists $N \in \N$ for which $\rank(A/N) = \min_{n \in \N} \rank(A/n)$. Let $s = \rank(A/N) \leq r$ be this minimal rank.

We will show that $A$ satisfies the conditions in \cref{lem:gprichtranslates} with $N$ as it is and $\eta = 2^{-(s+1)}$. It suffices to show the following: \emph{for all $f \in \N$ and $a \in \Z$, at most $2^{s}$-many translates of the set $(A - aN)/(Nf) = (A/N - a)/f$ are sufficient to cover all but finitely many elements of $\N$.} Indeed, this ensures that for every translation invariant mean $\lambda$ on $\N$, $\lambda\big( (A - aN) / (Nf) \big) \geq 2^{-s} > \eta$.

Let $f \in \N$ and $a \in \Z$. Since $\rank(A/fN) = s$, there exists an $\text{IP}_{s-1}$ set $G' \subseteq \N \setminus (A/fN)$. (If $s = 1$, then take $G' = \emptyset$.) Set $G \defeq fG' \cup\{0\}$, and note that $|G| \leq 2^s$. We claim that $A/N - G = \N$. Let $m \in \N$. Since $(fG') \cup \big(G + m\big)$ is an $\text{IP}_s$ set and $(fG') \cap (A/N) = \emptyset$ and $\rank(A/N) = s$, we see that $(A/N) \cap \big(G + m\big) \neq \emptyset$, implying that $m \in A/N - G$.

Since $A/N - G = \N$, the set $\N \setminus \big(A/N - a - G\big)$ is finite. Dividing by $f$, we see that $\N \setminus \big((A/N - a)/f - (G' \cup\{0\})\big)$ is co-finite, as was to be shown.
\end{proof}

As a corollary to \cref{thm:translatesofiprstararelargebetter} and \cref{thm:geoarithmeticpatterns}, we see that translates of $\ip_r^*$ sets contain arbitrarily long geo-arithmetic configurations and so, in particular, are GP-rich.

We now derive three consequences of \cref{thm:translatesofiprstararelargebetter} based on the connection between nilsystems and $\ipzero^*$ sets discussed in \cref{sec:results}.
A \emph{nilsystem} is a topological dynamical system $(X,T)$ where $X$ is a compact homogeneous space of a nilpotent Lie group $G$ and $T$ is a translation of $X$ by an element of $G$. The key fact in each of these consequences follows from \cite[Theorem 0.2]{BLiprstarcharacterization}: \emph{in a nilsystem $\xt$, for all non-empty, open $U \subseteq X$ and all $x \in U$, the set $R(x,U)$ is $\ipzero^*$.}\footnote{Strictly speaking, \cite[Theorem 0.2]{BLiprstarcharacterization} concerns invertible systems and $\ipzero^*$ subsets of $\Z$. This theorem applies to our setting by noting that nilsystems are invertible and that if a set $A$ is $\ip_r^*$ in $\Z$, then $A \cap \N$ is $\ip_r^*$ in $\N$.}

First, we show that sets of returns in minimal nilsystems have positive multiplicative density.

\begin{corollary}
Let $\xt$ be a minimal nilsystem. For all $x \in X$ and all non-empty, open $U \subseteq X$, the set $R(x,U)$ has positive multiplicative density in a coset of a multiplicative subsemigroup of $\N$.
\end{corollary}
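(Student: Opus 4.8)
The plan is to reduce the statement to the Bergelson--Leibman characterization of nilsystems recalled just above: I first use minimality to move the point $x$ inside $U$, and then invoke \cref{thm:translatesofiprstararelargebetter} to absorb the additive shift that this introduces.

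Fix $x \in X$ and a non-empty, open set $U \subseteq X$. Since $\xt$ is minimal, the forward orbit $\{T^n x \mid n \in \N_0\}$ is dense in $X$, so there exists $k \in \N_0$ with $y \defeq T^k x \in U$. Because $\xt$ is a nilsystem and $y \in U$ with $U$ open, the fact recalled before this corollary (from \cite[Theorem 0.2]{BLiprstarcharacterization}) shows that $R(y,U)$ is $\ipzstar$; in particular, $A \defeq R(y,U)$ is $\iprstar$ for some $r \in \N$.

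Now I would apply \cref{thm:translatesofiprstararelargebetter} to the set $A$: there exists $N \in \N$ such that for every $t \in \Z$,
\[\dstar_{(N,t)S_{N/(N,t)}}\big(A+t\big) \geq \frac{(N,t)}{2^{2r+2}N}.\]
Taking $t = k$ and using the standard manipulation $R(y,U) = R(T^k x,U) = R(x,U) - k$, one has $A + k = R(x,U) \cap \{k+1, k+2, \ldots\} \subseteq R(x,U)$, so the monotonicity of $\dstar_{nS}$ under inclusion gives
\[\dstar_{(N,k)S_{N/(N,k)}}\big(R(x,U)\big) \ \geq \ \dstar_{(N,k)S_{N/(N,k)}}\big(A+k\big) \ \geq \ \frac{(N,k)}{2^{2r+2}N} \ > \ 0.\]
Since $(N,k)S_{N/(N,k)}$ is a coset of the multiplicative subsemigroup $S_{N/(N,k)}$ of $\N$ (and is simply the coset $N\N$ of $\N$ when $k=0$), this is exactly the desired conclusion.

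There is essentially no hard step once this reduction is set up; the only point requiring care --- and the reason the translate version \cref{thm:translatesofiprstararelargebetter}, rather than \cref{thm:translatesofiprstararelarge} with $t=0$, is the natural tool here --- is that the Bergelson--Leibman characterization produces an $\iprstar$ return time set only for points already lying in $U$, whereas the corollary imposes no condition on $x$. Minimality repairs this by supplying a point of $U$ on the forward orbit of $x$, at the cost of an additive translation, which the translate theorem is designed to handle.
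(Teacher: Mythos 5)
Your argument is correct and is essentially the paper's own proof: both use minimality to replace $x$ by an iterate $T^k x$ lying in $U$, invoke the Bergelson--Leibman characterization to get an $\ipzstar$ return set, and then apply \cref{thm:translatesofiprstararelargebetter} to the resulting additive shift, finishing with monotonicity of $\dstar_{nS}$ under inclusion. Your version is slightly more explicit about the numerical bound and the edge case $k=0$, but there is no substantive difference in the route taken.
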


\begin{proof}
Because $\xt$ is minimal, there exists $n \in \N$ such that $T^n x \in U$. By \cite[Theorem 0.2]{BLiprstarcharacterization}, the set $R(T^n x, U)$ is $\ipzero^*$. It follows from \cref{thm:translatesofiprstararelargebetter} that translates of $R(T^n x,U)$ have positive multiplicative density in a coset of a multiplicative subsemigroup of $\N$. To finish, note that $R(x,U) \supseteq R(T^nx,U) + n$, a translate of $R(T^n x,U)$.
\end{proof}

A natural question is whether or not our main results can be enhanced by improving ``positive upper Banach density''  to ``multiplicatively piecewise syndetic.'' A set $A \subseteq \N$ is \emph{multiplicatively syndetic} if there exists a finite set $F \subseteq \N$ such that $\bigcup_{f \in F} A/f = \N$. A set $C \subseteq \N$ is \emph{multiplicatively piecewise syndetic} if there exists a multiplicatively syndetic set $A \subseteq \N$ and a set $B \subseteq \N$ with $\dtstar(B) = 1$ such that $C = A \cap B$.  Multiplicatively piecewise syndetic sets have positive multiplicative upper Banach density, but, by \cite[Theorem 6.4]{bcrzpaper}, there exist subsets of $\N$ of multiplicative density arbitrarily close to $1$ that are not multiplicatively piecewise syndetic.

We will argue now that the conclusion of \cref{thm:mainthmwithtotallymin} cannot be improved to show that the set $R(x,U)$ is, in general, multiplicatively piecewise syndetic.  Suppose $\xt$ is a totally minimal nilsystem and $U$ and $V$ are non-empty, disjoint open sets. Let $x \in U$ and put $A = R(x,U)$ and $B = R(x,V)$. Since $U \cap V = \emptyset$, $A \cap B = \emptyset$. By \cite[Theorem 0.2]{BLiprstarcharacterization}, the set $A$ is an $\ipzero^*$ set, and \cite[Corollary 7.3]{BGpaperonearxiv} gives that $A$ has non-empty intersection with all multiplicatively piecewise syndetic subsets of $\N$. Since $A \cap B = \emptyset$, it follows that the set $B$ is not multiplicatively piecewise syndetic.

Third, we prove \cref{cor:polyapprox} from the introduction, an application  of \cref{thm:translatesofiprstararelargebetter} to finding geo-arithmetic configurations in sets arising from polynomial Diophantine approximation.

\begin{proof}[Proof of \cref{cor:polyapprox}]
By writing the set $A$ as a set of return times of a point to an open set in a minimal nilsystem, it is shown in \cite[Theorem 6.14]{BGpaperonearxiv} that $A$ is a translated $\ipzero^*$ set. It follows by \cref{thm:translatesofiprstararelargebetter} that there exists $K \in \N$ such that $A$ has positive multiplicative upper Banach density in a coset of the multiplicative subsemigroup $S_K$. The stated geo-arithmetic configurations can be found in $A$ by using \cref{thm:geoarithmeticpatterns}.
\end{proof}

\section{Syndetic sets not arising from dynamics}\label{sec:concludingremarks}

As mentioned in \cref{sec:historyandcontext}, the fact that geometric progressions and multiplicative density are not translation invariant prevents us from being able to deduce results on arbitrary syndetic subsets of $\N$ from our dynamical ones. Still, one might hope that an arbitrary syndetic set or $\ipr^*$ set takes the form $R(x,U)$, or at least contains a set of the form $R(x,U)$, where $x$ and $U$ are a point and a non-empty, open set in a minimal system or nilsystem.  We show in this section that this is not the case.

\begin{lemma}\label{lem:propertyofdynamicsyndetic}
Let $\xt$ be a minimal system, $x \in X$, and $U \subseteq X$ open, non-empty. There exists $N \in \N$ such that for all $t \in \N$ and all $n \in S_N$, the set $(R(x,U) - t) / n$ is syndetic.
\end{lemma}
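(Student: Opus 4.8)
The plan is to recognize $(R(x,U)-t)/n$ as a return-time set for the power map $T^n$ and then to invoke \cref{lem:totalminimalatresolutionU}, which is tailor-made to control such sets uniformly in $n$.

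First I would unwind the notation. Using the identities $R_T(T^t x,U) = R(x,U)-t$ and $R_{T^n}(z,U) = R_T(z,U)/n$ recorded in \cref{sec:defs}, one has $(R(x,U)-t)/n = R_{T^n}(T^t x,U)$. Writing $y \defeq T^t x$, the statement therefore reduces to producing $N \in \N$, depending only on $U$, such that $R_{T^n}(y,U)$ is syndetic for every $t \in \N$ and every $n \in S_N$.

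Next I would recall from the start of \cref{sec:ratkronanddistallemma} (that is, from \cite[Theorem 3.1]{ye1992}) that for each $n$ the space $X$ decomposes into clopen pieces $X_{n,0},\dots,X_{n,d_n-1}$ with $d_n \mid n$ and each $(X_{n,i},T^n)$ minimal; since $d_n \mid n$, each $X_{n,i}$ is $T^n$-invariant, so the $T^n$-orbit closure of any point $y$ equals the unique piece $X_{n,i}$ containing $y$, and $(\orb_{T^n}(y),T^n)$ is minimal. This is exactly why $N$ must depend on $U$: it is false in general that $\orb_{T^n}(y)$ meets $U$ (take a four-point rotation with $n=2$), and the whole point of \cref{lem:totalminimalatresolutionU} is that this does hold once $n$ is coprime to a suitable $N=N(U)$.

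Finally I would assemble the argument. Let $N$ be the integer provided by \cref{lem:totalminimalatresolutionU} for $U$, fix $t\in\N$ and $n\in S_N$, and set $y = T^t x$ and $Y \defeq \orb_{T^n}(y)$. Applying \cref{lem:totalminimalatresolutionU} to the point $y$ and the power $T^n$, the open set $Y\cap U$ is non-empty, so by minimality of $(Y,T^n)$ there is an $h\in\N$ with $Y \subseteq \bigcup_{j=0}^{h-1}(T^n)^{-j}U$. Since every forward $T^n$-iterate of $y$ lies in $Y$, every block of $h$ consecutive positive integers contains an element of $R_{T^n}(y,U)$, so this set has gaps bounded by $h$ and is syndetic. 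The only substantive ingredient is \cref{lem:totalminimalatresolutionU} itself, which is already established; I therefore expect no real obstacle beyond the bookkeeping above.
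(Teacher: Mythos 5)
Your proposal is correct and rests on the same key ingredient as the paper's proof, namely \cref{lem:totalminimalatresolutionU}. The paper gets syndeticity more directly by using the equivalent covering formulation $\bigcup_{\ell=1}^\infty (T^n)^{-\ell}U = X$ from that lemma's proof and invoking compactness of $X$; your detour through the clopen decomposition and the minimality of $(\orb_{T^n}(y),T^n)$ arrives at the same bounded-gap conclusion with a bit more bookkeeping, but the substance is identical.
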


\begin{proof}
Let $N \in \N$ be from \cref{lem:totalminimalatresolutionU}, and let $t \in \N$ and $n \in S_N$. Since $\bigcup_{\ell=1}^\infty (T^n)^{-\ell} U=X$ and $X$ is compact, for all $y \in X$, the set $R_{T^n}(y,U)$ is syndetic; in particular, the set $R_{T^n}(T^tx,U) = (R(x,U) - t) / n$ is syndetic, as was to be shown.
\end{proof}

\begin{example}\label{ex:setwithoutipnaughtstarshifts}
There exists a syndetic set $A \subseteq \N$ such that for all $t,n \in \N$ with $n \geq 2$, the set $(A-t)/n$ is not syndetic. By \cref{lem:propertyofdynamicsyndetic}, it follows that $A$ does not contain a set of the form $R(x,U)$ where $x$ and $U$ are a point and a non-empty, open subset of a minimal dynamical system. 

To construct such a set, let $\{T_{t,n}\}_{t,n \in \N, n \geq 2}$ be a family of thick subsets of $\N$ (i.e., subsets containing arbitrarily long intervals) with the property that if $m \in T_{t,n}$ and $m + 1 \in T_{t',n'}$, then $t = t'$ and $n=n'$. Put
\[A = \left(\bigcup_{\substack{t,n \in \N \\ n \geq 2}} \big(T_{t,n} \setminus (n\N+t) \big)\right) \cup  \left(\N \setminus \bigcup_{\substack{t,n \in \N \\ n \geq 2}} T_{t,n} \right).\]
We claim that $A$ is syndetic; in fact, we will show that $A \cup (A-1) = \N$. Indeed, let $m \in \N$. If $m \not\in \bigcup T_{t,n}$, then $m \in A$. Otherwise, there exists $t, n \in \N$ with $n \geq 2$ such that $m \in T_{t,n}$. If $m+1 \not\in \bigcup T_{t,n}$, then $m \in A-1$. Otherwise, $m+1 \in \bigcup T_{t,n}$, which implies that $m+1 \in T_{t,n}$. Since $n \geq 2$, at least one of $m$ and $m+1$ is in the set $T_{t,n} \setminus (n\N+t)$, meaning $m \in A \cup (A-1)$. In any case, we have shown that $m \in A \cup (A-1)$, which, since $m \in \N$ was arbitrary, implies that $A \cup (A-1) = \N$. By construction, however, for all $t, n \in \N$ with $n \geq 2$, the set $(A-t)/n$ has empty intersection with the thick set $(T_{t,n} - t)/n$, meaning $(A-t)/n$ is not syndetic.
\end{example}

\begin{lemma}\label{lem:propertyofnilsystem}
Let $\xt$ be a minimal nilsystem, $x \in X$, and $U \subseteq X$ open, non-empty. The set $R(x,U)$ is non-empty, and for all $t \in R(x,U)$, the set $R(x,U) - t$ is $\ipzero^*$.
\end{lemma}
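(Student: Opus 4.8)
The plan is to reduce the statement to the Bergelson--Leibman characterization of nilsystems quoted above, after a change of base point. The non-emptiness of $R(x,U)$ is immediate from minimality: the forward orbit $o_T(x)$ is dense in $X$, so it meets the non-empty open set $U$, i.e.\ $T^n x \in U$ for some $n \in \N$.

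For the main assertion, fix $t \in R(x,U)$, so that $T^t x \in U$. The first step is to invoke the standard manipulation recorded in \cref{sec:defs}, namely $R_T(T^t x, U) = R_T(x,U) - t$; this is checked directly, since $n \in R_T(T^t x, U)$ iff $T^{n+t} x \in U$ iff $n + t \in R(x,U)$ iff $n \in R(x,U) - t$. Thus $R(x,U) - t = R_T(T^t x, U)$. Now $T^t x$ is a point of $X$ lying in the open set $U$, so the key fact extracted from \cite[Theorem~0.2]{BLiprstarcharacterization} — that in a nilsystem the set of return times of any point of $U$ back to $U$ is $\ipzero^*$ — applies with base point $T^t x$ and open set $U$, yielding that $R_T(T^t x, U) = R(x,U) - t$ is $\ipzero^*$, as desired.

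There is no genuine obstacle here: the entire content of the lemma sits inside the Bergelson--Leibman theorem, which we are free to use as a black box; the statement merely records that one may always translate a return-time set of a minimal nilsystem so that its defining point lands inside the target open set, and that doing so preserves membership in the class of $\ipzero^*$ sets.
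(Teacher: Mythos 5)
Your proof is correct and follows exactly the paper's own argument: non-emptiness of $R(x,U)$ from minimality, the identity $R(x,U)-t = R(T^t x, U)$, and then the Bergelson--Leibman characterization applied to the point $T^t x \in U$. Nothing to add.
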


\begin{proof}
Because $\xt$ is minimal, the set $R(x,U)$ is non-empty. Let $t \in R(x,U)$. Because $T^t x \in U$, it follows from \cite[Theorem 0.2]{BLiprstarcharacterization} that the set $R(x,U) - t = R(T^t x, U)$ is $\ip_0^*$.
\end{proof}

\begin{example}\label{ex:propertyofnilsystem}
There exists an $\ip_2^*$ set $A \subseteq \N$ such that for all $t \in \Z \setminus \{0\}$, the set $A-t$ is not $\ip_0^*$. By \cref{lem:propertyofnilsystem}, it follows that $A$ does not contain a set of the form $R(x,U)$ where $x$ and $U$ are a point and a non-empty, open subset of a minimal nilsystem. 

To construct such a set, let $(m_n)_{n \in \N} \subseteq \Z \setminus \{0\}$ be a sequence with the property that for all $t \in \Z \setminus \{0\}$, there are infinitely many $n \in \N$ such that $m_n=t$. Choose a sequence $(r_n)_{n \in \N} \subseteq \N$ that is increasing sufficiently rapidly so that the set $B \defeq \bigcup_{n \in \N} (r_n \{1, \ldots, n\} + m_n) \subseteq \N$ is not $\ip_2$, that is, does not contain a configuration of the form $\{x, y, x+y\}$.  For $t \in \Z\setminus \{0\}$, define
\[B_t \defeq \bigcup_{\substack{n \in \N \\ m_n = t}} \big(r_n \{1, \ldots, n\} + t\big) \subseteq B,\]
and note that $B_t - t$ is an $\ip_0$ set. Set $A = \N \setminus B$. Since $B$ is not $\ip_2$, the set $A$ is $\ip_2^*$, and for  $t \in \Z \setminus \{0\}$, $(A-t) \cap (B_t-t) = \emptyset$, implying that $A-t$ is not $\ip_0^*$.
\end{example}

\section{Concluding remarks and questions}\label{sec:remarksandquestions}

We collect here a number of further questions and open problems, beginning with ones of a dynamical nature.

There are two primary avenues for improvement in the main dynamical theorems, Theorems~\ref{thm:mainthm}, \ref{thm:mainthmwithtotallymin}, and \ref{thm:mainthmwithdistal}: upgrading the conclusions by saying more about the multiplicative combinatorial structure of return time sets $R(x,U)$, and enlarging the set of points $X'$ about which we can address the sets $R(x,U)$.  In the first direction, it is natural to speculate how much the conclusion of \cref{thm:mainthm} can be upgraded.

\begin{question}\label{quest:posdenreturns}
Let $\xt$ be a minimal dynamical system. Does there exist a residual set of points $X' \subseteq X$ such that for all $x \in X'$ and all non-empty, open $U \subseteq X$, the set $R(x,U)$ has positive multiplicative density in a coset of a multiplicative subsemigroup of $\N$?
\end{question}

The conclusion in this question could be further upgraded to, \emph{``the set $R(x,U)$ has multiplicative density $1$ in a coset of a multiplicative subsemigroup of $\N$?''}  If true, such a result would lend further evidence toward the  stronger conjectures about the multiplicative combinatorial structure of additively syndetic sets outlined below.

In the second direction, it is natural to ask about the nature of return time sets $R(x,U)$ for points $x$ outside of $X'$, the residual subset of $X$ that appears in each of the main dynamical theorems.  A positive answer to the following question would improve \cref{thm:mainthm}.

\begin{question}\label{quest:gpsforallpoints}
Let $\xt$ be a minimal dynamical system. Is it true that for all $x \in X$ and all non-empty, open $U \subseteq X$, the set $R(x,U)$ contains arbitrarily long geometric progressions?
\end{question}

There is a positive answer to Questions~\ref{quest:posdenreturns} and \ref{quest:gpsforallpoints} in the case that $\xt$ is an irrational rotation of the $1$-torus. It can be shown in that case that for all $x \in \T$ and all non-empty, open $U \subseteq \T$, there exist $n, N \in \N$ such that $d_{n S_N}^*\big( R(x,U) \big) = 1$.

While we are not able to answer these questions in more generality, we do know that systems in which the return time sets $R(x,U)$ are multiplicatively large for \emph{all} points $x \in X$ enjoy some rather strong dynamical properties.  The following lemma outlines some of the (equivalent) dynamical consequences of assuming that every return times set $R(x,U)$ is \emph{multiplicatively thick} in $\N$: for all finite $F \subseteq \N$, there exists $m \in \N$ so that $mF \subseteq R(x,U)$.

\begin{lemma}\label{lem:equivformsofallmultthick}
Let $\xt$ be a dynamical system, and for $n \in \N$, let $\Delta_n = \Delta_n(X) \subseteq X^n$ be the diagonal $\{(x,\ldots,x) \in X^n \ | \ x \in X \}$. The following are equivalent:
\begin{enumerate}
\item \label{item:equivone} for all $x \in X$ and all non-empty, open $U \subseteq X$, the set $R(x,U)$ is multiplicatively thick in $\N$;
\item \label{item:equivtwo} for all $x \in X$ and all $n \in \N$, the $T \times T^2 \times \cdots \times T^n$-orbit closure of $(x,\ldots,x)$ contains the diagonal $\Delta_n$;
\item \label{item:equivthree} for all non-empty, open $U \subseteq X$ and all $n \in \N$,
\[\bigcup_{m=1}^\infty (T \times T^2 \times \cdots \times T^n)^{-m} \big( U \times \cdots \times U\big)  \supseteq \Delta_n;\]
\item \label{item:equivfour} for all non-empty, open $U \subseteq X$ and all $n \in \N$,
\[\bigcup_{m=1}^\infty \bigcap_{i=1}^n T^{-mi} U = X;\]
\item \label{item:equivfive} \label{item:syndeticandthickcondition}for all $x \in X$ and all non-empty, open $U \subseteq X$, the set $R(x,U)$ satisfies: for all $n \in \N$, there exists a finite $F \subseteq \N$ such that for all $\ell \in \N_0$, there exists $m \in F$ such that $\ell + m\{1, \ldots, n\} \subseteq R(x,U)$.
\end{enumerate}
\end{lemma}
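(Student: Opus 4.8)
The plan is to route every equivalence through condition~\ref{item:equivone}. The starting point is a reformulation of multiplicative thickness: since every finite subset of $\N$ is contained in some interval $\{1,\ldots,n\}$, the set $R(x,U)$ is multiplicatively thick if and only if for every $n\in\N$ there is an $m\geq 1$ with $m\{1,\ldots,n\}\subseteq R(x,U)$; and, writing $\vec m=(1,2,\ldots,n)$ and $T^{\vec m}=T\times T^2\times\cdots\times T^n$, the inclusion $m\{1,\ldots,n\}\subseteq R(x,U)$ is equivalent to each of the assertions ``$T^{mi}x\in U$ for $i=1,\ldots,n$'', ``$(T^{\vec m})^m(x,\ldots,x)\in U\times\cdots\times U$'', and ``$x\in\bigcap_{i=1}^n T^{-mi}U$''. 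Granting this dictionary, the equivalence of conditions~\ref{item:equivone}, \ref{item:equivthree}, and \ref{item:equivfour} follows by merely unwinding the quantifiers: each of the three asserts that for every non-empty open $U$, every $n$, and every point, one can reach $U\times\cdots\times U$ (equivalently, $\bigcap_{i=1}^n T^{-mi}U$) at some positive time $m$.

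For the implication \ref{item:equivone}~$\Rightarrow$~\ref{item:equivfive}, fix $x$, a non-empty open $U$, and $n$. For each $y\in X$, condition~\ref{item:equivone} applied to the pair $(y,U)$ yields an $m_y\geq 1$ with $T^{m_y i}y\in U$ for all $i=1,\ldots,n$, and by continuity of the maps $T^{m_y i}$ there is an open set $V_y\ni y$ on which these memberships persist for every $z\in V_y$. Compactness of $X$ provides a finite subcover $V_{y_1},\ldots,V_{y_r}$; put $F=\{m_{y_1},\ldots,m_{y_r}\}$. For any $\ell\in\N_0$, the point $T^\ell x$ lies in some $V_{y_j}$, whence $\ell+m_{y_j}\{1,\ldots,n\}\subseteq R(x,U)$, which is exactly condition~\ref{item:equivfive}. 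The reverse implication is the special case $\ell=0$.

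For \ref{item:equivone}~$\Rightarrow$~\ref{item:equivtwo}, fix $x$ and $n$; a basic neighborhood of a diagonal point $(y,\ldots,y)\in\Delta_n$ contains a product $U\times\cdots\times U$ with $U$ open and $y\in U$, and condition~\ref{item:equivone} supplies an $m\geq 1$ with $(T^{\vec m})^m(x,\ldots,x)\in U\times\cdots\times U$, so $(y,\ldots,y)$ lies in the $T^{\vec m}$-orbit closure of $(x,\ldots,x)$. The implication \ref{item:equivtwo}~$\Rightarrow$~\ref{item:equivone} is the only delicate point, because the orbit closure in~\ref{item:equivtwo} includes the time-$0$ term $(x,\ldots,x)$ whereas~\ref{item:equivone} demands a positive time. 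To circumvent this, observe that the case $n=1$ of~\ref{item:equivtwo} says $\orb_T(x)=X$ for every $x$, i.e.\ that $\xt$ is minimal; if $X$ had an isolated point, minimality would force $X$ to be a single finite periodic orbit, and a direct check (e.g.\ with $n=2$) shows that~\ref{item:equivtwo} then fails unless $|X|=1$, in which case all five conditions hold trivially. So we may assume $X$ has no isolated points. Given $x$, a non-empty open $U$, and $n$, pick $z\in U$ with $z\neq x$ (if $x\notin U$ any $z\in U$ works; if $x\in U$ then $U$ is a neighborhood of the non-isolated point $x$ and so meets $X\setminus\{x\}$). Since $(z,\ldots,z)\in\Delta_n\subseteq\orb_{T^{\vec m}}(x,\ldots,x)$, there are times $m_k$ with $(T^{\vec m})^{m_k}(x,\ldots,x)\to(z,\ldots,z)$; as this limit differs from the time-$0$ term $(x,\ldots,x)$, we must have $m_k\geq 1$ for all large $k$, and for such $k$ we get $(T^{\vec m})^{m_k}(x,\ldots,x)\in U\times\cdots\times U$, i.e.\ $m_k\{1,\ldots,n\}\subseteq R(x,U)$. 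As $n$ was arbitrary, $R(x,U)$ is multiplicatively thick.

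The essential obstacle is precisely this last step: reconciling the ``positive time'' built into conditions~\ref{item:equivone}, \ref{item:equivthree}, \ref{item:equivfour}, and \ref{item:equivfive} with the time-$0$-inclusive orbit closure in condition~\ref{item:equivtwo}, which is why one must first extract minimality from~\ref{item:equivtwo} and dispense with the degenerate finite case. The remaining implications are routine translations between the combinatorial and dynamical formulations.
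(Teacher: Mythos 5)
Your proof is correct, and the individual arguments coincide in substance with the paper's — unwinding the quantifiers for $(1)\Leftrightarrow(3)\Leftrightarrow(4)$, a continuity-and-compactness covering argument to produce the finite set $F$ in $(5)$ (the paper does this as $(4)\Rightarrow(5)$, you as $(1)\Rightarrow(5)$, but it is the same cover), and the $\ell=0$ specialization for $(5)\Rightarrow(1)$. The difference is organizational: you route everything through $(1)$, whereas the paper proves the cycle $(1)\Rightarrow(2)\Rightarrow(3)\Rightarrow(4)\Rightarrow(5)\Rightarrow(1)$. Where you genuinely add value is in returning from $(2)$: the forward orbit closure $\orb_{T\times T^2\times\cdots\times T^n}$ is taken over $\N_0$ and so contains the time-$0$ term $(x,\ldots,x)$, while $(1)$, $(3)$, $(4)$, $(5)$ all demand a strictly positive time. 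The paper's step $(2)\Rightarrow(3)$ simply says ``By $(2)$, there exists $m\in\N$ such that $\ldots$'' without explaining why $m$ may be taken positive when $x$ already lies in $U$; your argument supplies the missing justification by first extracting minimality from $(2)$ with $n=1$, using $(2)$ with $n=2$ to exclude the degenerate finite-orbit case, and then targeting a diagonal point $(z,\ldots,z)$ with $z\in U$, $z\neq x$, which forces the approach to occur at positive times. This makes your write-up more careful than the original at precisely the one place where care is needed.
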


\begin{proof}
We will show that each condition implies the one following it; that condition (\ref{item:equivfive}) implies condition (\ref{item:equivone}) is immediate by taking $\ell = 0$.

(\ref{item:equivone}) implies (\ref{item:equivtwo}): Let $x \in X$ and $n \in \N$.  Any non-empty, open subset $V$ of $\Delta_n$ contains a set of the form $( U \times \cdots \times U ) \cap \Delta_n$, where $U \subseteq X$ is non-empty, open. By (\ref{item:equivone}), there exists $m \in \N$ such that $m\{1,\ldots,n\} \subseteq R(x,U)$. This means that $(T \times T^2 \times \cdots \times T^n)^m(x,\ldots,x) \in U \times \cdots \times U$. Since $V$ was arbitrary, this shows that the $T \times T^2 \times \cdots \times T^n$-orbit closure of $(x,\ldots,x)$ contains the diagonal $\Delta_n$.

(\ref{item:equivtwo}) implies (\ref{item:equivthree}): Let $U \subseteq X$ be non-empty, open and $n \in \N$. Let $(x,\ldots,x) \in \Delta_n$. By (\ref{item:equivtwo}), there exists $m \in \N$ such that $(T \times T^2 \times \cdots \times T^n)^m(x,\ldots,x) \in U \times \cdots \times U$. This implies that $(x,\ldots,x) \in (T \times T^2 \times \cdots \times T^n)^{-m} \big(U \times \cdots \times U \big)$.

(\ref{item:equivthree}) implies (\ref{item:equivfour}): Let $U \subseteq X$ be non-empty, open and $n \in \N$. Let $x \in X$. By (\ref{item:equivthree}), there exists $m \in \N$ such that $(x,\ldots,x) \in (T \times T^2 \times \cdots \times T^n)^{-m} \big(U \times \cdots \times U \big)$, meaning that $x \in \bigcap_{i=1}^n T^{-mi} U$.

(\ref{item:equivfour}) implies (\ref{item:equivfive}): Let $x \in X$ and $U \subseteq X$ be non-empty, open. Let $n \in \N$. By (\ref{item:equivfour}) and the compactness of $X$, there exists a finite $F \subseteq \N$ such that
\[\bigcup_{m \in F} \bigcap_{i=1}^n T^{-mi} U = X.\]
Let $\ell \in \N_0$. There exists $m \in F$ such that $T^\ell x \in \bigcap_{i=1}^n T^{-mi} U$, meaning that $\ell + m\{1,\ldots,n\} \subseteq R(x,U)$.
\end{proof}

The $n=1$ case of condition (\ref{item:equivfour}) in \cref{lem:equivformsofallmultthick} is equivalent to the minimality of $\xt$. Condition (\ref{item:syndeticandthickcondition}) is easily seen to imply that $R(x,U)$ is both additively syndetic (the gap size is bounded by $\max F$) and multiplicatively thick. This is to be expected: as soon as the set $R(x,U)$ is non-empty for all $x \in X$ and all non-empty, open $U \subseteq X$, the system $\xt$ must be minimal and hence the sets $R(x,U)$ must be additively syndetic.

We proceed now with some open questions of a combinatorial nature related to the main motivating question, \cref{question:mainquestion}.  The most basic open combinatorial question is whether or not syndetic sets contain a square ratio.

\begin{question}\label{quest:squareratios}
Do all additively syndetic subsets of $\N$ contain a configuration of the form $\{x, xy^2\}$ for $x, y \in \N$?
\end{question}

Going beyond square ratios and geometric progressions, the results in \cref{thm:mainthmwithtotallymin,thm:mainthmwithdistal} suggest that syndetic subsets of $\N$ may have positive multiplicative density in a coset of some multiplicative subsemigroup. In fact, the improvement of \cref{thm:mainthmwithdistal} in \cref{thm:mainthmwithdistalbetter} suggests the possibility that finitely many subsemigroups suffice to capture the multiplicative density of a syndetic set and all of its translates.

\begin{question}\label{quest:posdensityofsyndincoset} Let $A \subseteq \N$ be additively syndetic.
\begin{enumerate}
\item Do there exist $n, N \in \N$ such that $\dstar_{nS_N}(A) > 0$?
\item Do there exist $i, N \in \N$ such that for all $t \in \Z$, $\dstar_{(t,N) S_{N / (N,t)}}(A + i + t) > 0$?
\end{enumerate}
\end{question}

We have not even been able to rule out the possibility that syndetic sets have full multiplicative density in a coset of some non-trivial multiplicative subsemigroup.  A positive answer to the following question would yield a positive answer not only to \cref{question:mainquestion}, but to Questions~\ref{quest:posdenreturns}, \ref{quest:gpsforallpoints}, \ref{quest:squareratios}, and \ref{quest:posdensityofsyndincoset} (1). Here $S_{N,1}$ denotes the multiplicative subsemigroup of positive integers congruent to 1 modulo $N$.

\begin{question}\label{quest:thickinsubsemi}
Let $A \subseteq \N$ be additively syndetic.  Do there exist $n, N \in \N$ such that $\dstar_{nS_{N,1}}(A) = 1$?\footnote{Since publication, this question has been answered in the negative: there exists a set $A \subseteq \N$ for which $A \cup (A-1) = \N$ but for which no such $n$ and $N$ exist.}
\end{question}

Being unable to answer \cref{quest:thickinsubsemi} for arbitrary syndetic sets, it makes sense to narrow the scope by asking the same question for combinatorially defined subclasses of syndetic sets.

\begin{question}\label{quest:thickinsubsemiforiprstar}
Let $A \subseteq \N$ be additively $\ip_0^*$.  Is it true that for all $t \in \Z$, there exist $n, N \in \N$ such that $\dstar_{nS_{N,1}}(A+t) = 1$?
\end{question}

It is a consequence of \cite[Corollary 7.3]{BGpaperonearxiv} that the answer to \cref{quest:thickinsubsemiforiprstar} is ``yes'' when $t=0$ with $n=N=1$. Still, it is entirely possible that some or all of the questions posed here have a negative answer in general.

\bibliographystyle{alphanum}
\bibliography{main}

\begin{thebibliography}{BCRZK}

\bibitem[AB]{AliprantisBorderBook}
C.~D. Aliprantis and K.~C. Border.
\newblock {\em Infinite dimensional analysis. A hitchhiker's guide}.
\newblock Springer, Berlin, third edition, 2006.

\bibitem[BBHS]{bbhsaddimpliesmult}
M.~Beiglb{\"o}ck, V.~Bergelson, N.~Hindman, and D.~Strauss.
\newblock Multiplicative structures in additively large sets.
\newblock {\em J. Combin. Theory Ser. A}, 113(7):1219--1242, 2006.

\bibitem[BCRZK]{bcrzpaper}
V.~Bergelson, J.~C. Christopherson, D.~Robertson, and P.~Zorin-Kranich.
\newblock Finite products sets and minimally almost periodic groups.
\newblock {\em J. Funct. Anal.}, 270(6):2126--2167, 2016.

\bibitem[Ber]{bmultlarge}
V.~Bergelson.
\newblock Multiplicatively large sets and ergodic {R}amsey theory.
\newblock {\em Israel J. Math.}, 148:23--40, 2005.
\newblock Probability in mathematics.

\bibitem[BG]{BGpaperonearxiv}
V.~{Bergelson} and D.~{Glasscock}.
\newblock {On the interplay between notions of additive and multiplicative
  largeness and its combinatorial applications}.

\bibitem[BL]{BLiprstarcharacterization}
V.~Bergelson and A.~Leibman.
\newblock {${\rm IP}_r^\ast$}-recurrence and nilsystems.
\newblock {\em Adv. Math.}, 339:642--656, 2018.

\bibitem[BS]{brinstuckbook}
M.~Brin and G.~Stuck.
\newblock {\em Introduction to dynamical systems}.
\newblock Cambridge University Press, Cambridge, 2002.

\bibitem[Dow]{downarowiczbook}
T.~Downarowicz.
\newblock {\em Entropy in dynamical systems}, volume~18 of {\em New
  Mathematical Monographs}.
\newblock Cambridge University Press, Cambridge, 2011.

\bibitem[EW]{EW11}
M.~Einsiedler and T.~Ward.
\newblock {\em Ergodic theory with a view towards number theory}, volume 259 of
  {\em Graduate Texts in Mathematics}.
\newblock Springer-Verlag London, Ltd., London, 2011.

\bibitem[FK]{furstenbergkatznelsonipszem}
H.~Furstenberg and Y.~Katznelson.
\newblock An ergodic {S}zemer\'edi theorem for {IP}-systems and combinatorial
  theory.
\newblock {\em J. Analyse Math.}, 45:117--168, 1985.

\bibitem[For]{fort}
M.~K. Fort, Jr.
\newblock Points of continuity of semi-continuous functions.
\newblock {\em Publ. Math. Debrecen}, 2:100--102, 1951.

\bibitem[Fur1]{furstenbergdistalstructuretheory}
H.~Furstenberg.
\newblock The structure of distal flows.
\newblock {\em Amer. J. Math.}, 85:477--515, 1963.

\bibitem[Fur2]{foriginal}
H.~Furstenberg.
\newblock Ergodic behavior of diagonal measures and a theorem of {S}zemer\'edi
  on arithmetic progressions.
\newblock {\em J. Analyse Math.}, 31:204--256, 1977.

\bibitem[Fur3]{furstenberg-book}
H.~Furstenberg.
\newblock {\em Recurrence in ergodic theory and combinatorial number theory}.
\newblock Princeton University Press, Princeton, N.J., 1981.
\newblock M. B. Porter Lectures.

\bibitem[FW]{furstenbergweiss}
H.~Furstenberg and B.~Weiss.
\newblock Topological dynamics and combinatorial number theory.
\newblock {\em J. Analyse Math.}, 34:61--85 (1979), 1978.

\bibitem[Gla1]{glasnertopergodicdecomp}
E.~Glasner.
\newblock Topological ergodic decompositions and applications to products of
  powers of a minimal transformation.
\newblock {\em J. Anal. Math.}, 64:241--262, 1994.

\bibitem[Gla2]{glasnertopstructuretheory}
E.~Glasner.
\newblock Structure theory as a tool in topological dynamics.
\newblock In {\em Descriptive set theory and dynamical systems
  ({M}arseille-{L}uminy, 1996)}, volume 277 of {\em London Math. Soc. Lecture
  Note Ser.}, pages 173--209. Cambridge Univ. Press, Cambridge, 2000.

\bibitem[KM]{kakeyamorimoto}
S.~Kakeya and S.~Morimoto.
\newblock On a theorem of mm. bandet and van der waerden.
\newblock {\em Japanese journal of mathematics: transactions and abstracts},
  7:163--165, 1930.

\bibitem[KST]{kstpaper}
S.~Kolyada, L.~Snoha, and S.~Trofimchuk.
\newblock Noninvertible minimal maps.
\newblock {\em Fund. Math.}, 168(2):141--163, 2001.

\bibitem[Mor]{moreiraxyproblem}
J.~Moreira.
\newblock Monochromatic sums and products in {$\mathbb{N}$}.
\newblock {\em Ann. of Math. (2)}, 185(3):1069--1090, 2017.

\bibitem[Pat]{patilgeomsyndetic}
B.~R. Patil.
\newblock Geometric progressions in syndetic sets.
\newblock {\em Arch. Math. (Basel)}, 113(2):157--168, 2019.

\bibitem[Phe]{phelpsbook}
R.~R. Phelps.
\newblock {\em Convex functions, monotone operators and differentiability},
  volume 1364 of {\em Lecture Notes in Mathematics}.
\newblock Springer-Verlag, Berlin, second edition, 1993.

\bibitem[Sze]{szemeredi}
E.~Szemer{\'e}di.
\newblock On sets of integers containing no {$k$} elements in arithmetic
  progression.
\newblock {\em Acta Arith.}, 27:199--245, 1975.
\newblock Collection of articles in memory of Juri{\u\i} Vladimirovi{\v{c}}
  Linnik.

\bibitem[vdW]{vdw}
B.~L. van~der Waerden.
\newblock {Beweis einer Baudetschen Vermutung.}
\newblock {\em {Nieuw Arch. Wiskd., II. Ser.}}, 15:212--216, 1927.

\bibitem[Ye]{ye1992}
X.~Ye.
\newblock D-function of a minimal set and an extension of sharkovskii's theorem
  to minimal sets.
\newblock {\em Ergodic Theory and Dynamical Systems}, 12(2):365--376, 1992.

\end{thebibliography}
\end{document}